\newtheorem{theorem}{Theorem}[section]
\newtheorem{proposition}[theorem]{Proposition}
\newtheorem{definition}[theorem]{Definition}
\newtheorem{remark}[theorem]{Remark}
\def\square{\hbox{\vrule\vbox{\hrule\phantom{o}\hrule}\vrule}}
\def\cqfd{\hfill\square}
\newenvironment{proof}%
{\noindent{\sl Proof:}}{{\null\cqfd\medskip}}
\def\C{{\mathbb C}}
\def\N{{\mathbb N}} 
\def\R{{\mathbb R}} 
\def\Z{{\mathbb Z}}
\def\CC{\mathcal {C}}
\def\CD{\mathcal {D}}
\def\CE{\mathcal {E}}
\def\CM{\mathcal {M}}
\def\CO{\mathcal {O}}
\def\CP{\mathcal {P}}
\def\CT{\mathcal {T}}
\def\re{\mathop{\rm Re}\nolimits}
\def\im{\mathop{\rm Im}\nolimits}
\def\Hol{\mathop{\rm Hol}\nolimits}
\newcommand{\tr}{\operatorname{tr}}
\def\<{\langle}
\def\>{\rangle}
\def\ds{\displaystyle}
\title{$\CP\CT$-symmetry and Schr\"odinger operators.\\
 The double well case}
\author{
Nawal Mecherout \\
{\small Universit\'{e} de Mostaganem }\\
{\small Facult\'{e} des science exactes et informatique }\\
{\small 27000-Mostaganem, Alg\'erie}\\
{\small mecheroutnawel@yahoo.fr }
\and
Naima Boussekkine \\
{\small Universit\'{e} de Mostaganem }\\
{\small Facult\'{e} des science exactes et informatique }\\
{\small 27000-Mostaganem, Alg\'erie}\\
{\small nboussekkine@yahoo.fr}
\and
Thierry Ramond \footnote{Supported by the ANR project NOSEVOL ANR 2011 BS 010119 01.}\\
{\small LMO (UMR CNRS 8628)}\\
{\small Universit\'e Paris Sud}\\ {\small FR 91405 Orsay, France}\\
{\small thierry.ramond@math.u-psud.fr}
\and
Johannes Sj¨\"ostrand \footnotemark[\value{footnote}] \\
{\small IMB, Universit\'e de Bourgogne (UMR CNRS 5584)}\\
{\small 9, Avenue Alain Savary, BP 47870}\\
{\small FR-21078 Dijon c\'edex}\\
{\small jo7567sj@u-bourgogne.fr }
}
\begin{document}

\maketitle

\newpage

\begin{abstract}
  We study a class of $\CP\CT$-symmetric semiclassical Schr\"{o}dinger
  operators, which are perturbations of a selfadjoint one.  Here, we
  treat the case where the unperturbed operator has a double-well
  potential. In the simple well case, two
  of the authors have proved in \cite{BoMe14} that, when the potential is analytic, the eigenvalues stay
  real for a perturbation of size $\CO(1)$. We show here, in the
  double-well case, that the eigenvalues stay real only for
  exponentially small perturbations, then bifurcate into the complex
  domain when the perturbation increases and we get precise asymptotic
  expansions. The proof uses complex WKB-analysis, leading to a fairly
  explicit quantization condition.
\end{abstract}

\tableofcontents

\newpage
\section{Introduction and results}
\label{int}

Operators that are $\CP\CT$-symmetric have been proposed in quantum
mechanics as an alternative to selfadjoint ones. From a physicist
point of view, it is of course very important to verify that the
spectrum of such operators is real and there has been a considerable
activity in this area \cite{Be05}, \cite{BeBeMa02}, \cite{BeMa10}, \cite{BeBoMe98}, \cite{CaGr05},
\cite{CaGrCa06}, \cite{CaGrSj05}, \cite{CaGrSj07}, \cite{LeZn00}, \cite{Mo05}, \cite{Mo05a}, \cite{Mo02b}, \cite{Mo02c}, \cite{Sj11},
\cite{Si11}, \cite{ZnCaBaRo00}\dots  There is in particular an issue \cite{BeFrGuJo12} of the Journal of Physics A 
which is devoted to non-selfadjoint operators in quantum physics, where the $\CP\CT$-symmetry property plays the main role. 
In a recent paper  \cite{CaGr14}, E. Caliceti and S. Graffi  study qualitatively if, in a perturbative setting, the phenomenon called $\CP\CT$-symmetry phase transition occurs for $\CP\CT$-symmetric polynomial potentials, that is if the eigenvalues bifurcate from real to complex values. The reader may also find interesting references in that paper.

It has been proved recently  by two of the authors in \cite{BoMe14} in the analytic category, that the $\CP\CT$-symmetric perturbation of a semiclassical Schr\"odinger operator with a real-valued single well potential,  have real spectrum  even for perturbations of size $\CO(1)$. Here we address the double-well case, where numerical results suggest that the situation is very different. For example, in Figure \ref{int.fig.1}, we have plotted the eigenvalues of the Schr\"odinger operator
$P_{\varepsilon,h}=h^2D^2 +0.05x^4-.5x^2+i\varepsilon x$ in a neighborhood of the barrier top, for $h=.01$ and $\varepsilon=k.10^{-m}$, $k=1,2,3,4$ and $m=2,3,4,5$. They have been computed by a simple finite difference scheme using the package scipy in python, and plotted using the package matplotlib \cite{Hu07}. It appears that the eigenvalues close to $E_{0}>0$  are real, giving a numerical illustration of the results in \cite{BoMe14} since we are then in a simple well situation. In the double well case, that is for  $E_{0}<0$, the eigenvalues close to $E_{0}$ seem to be non-real except for extremely small $\varepsilon$, and this is the phenomenon we consider in this paper.

\begin{figure}[h]
\begin{center}
\includegraphics[width=13cm]{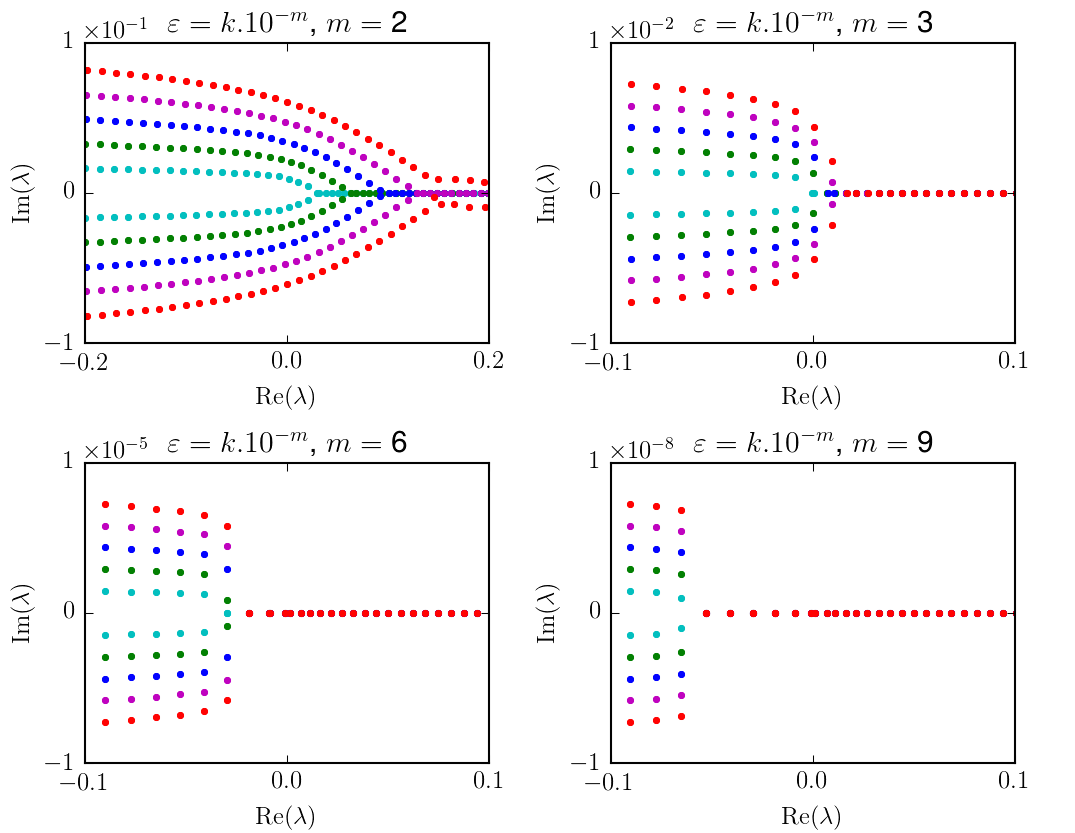}
\caption{Eigenvalues of $P_{\varepsilon,h}=h^2D^2 +0.05x^4-.5x^2+i\varepsilon x$ near 0, with $h=10^{-2}$ and $\varepsilon=k.10^{-m}, k=1,\dots,5$. \label{int.fig.1}}
\end{center}
\end{figure}

We recall that an operator is said to be $\CP\CT$-symmetric when it commutes with the operator $\CP\CT$, where $\CP$ is the parity operator, and $\CT$ the time-reversal operator given by
\begin{equation}
\CP u(x)=u(-x) \mbox{ and } \CT u(x)= \overline{u( \overline x)}.
\label{int.1}
\end{equation}
Here, we study small perturbations $P_{h,\varepsilon}=P_{\varepsilon}(x,hD)$ of self-adjoint semiclassical Schr¨\"odinger operators  of the form\begin{equation}
P_{\varepsilon}(x,hD)=-h^2\frac{d^2}{dx^2}+V_{\varepsilon}(x), \quad V_{\varepsilon}(x)=V_{0}(x)+i\varepsilon W(x),
\end{equation}
where $V_{0}$ and $W$ are smooth functions on $\R$ and $E_{0}\in \R$ a fixed energy, satisfying the following assumptions:

\begin{description}

\item[(A1)] $V_{0}$ is $\CC^\infty$ and real-valued on $\R$.

\item[(A2)] There exists $m_{0}>0$ such that, with $\< x\>= (1+x^2)^{\frac{1}{2}}$,
$$
\forall k\in \N,\exists C_{k}>0,\; \forall x\in \R,\;  \vert V^{(k)}_{0}(x)\vert\leq C_{k}\< x\>^{m_0-k}.
$$
and
$$
\forall x\in \R\setminus ]-C_{0},C_{0}[,\;  \vert V_{0}(x)-E_{0}\vert\geq
\frac{1}{C_{0}}\<x\>^{m_{0}}.
$$
\item[(A3)] For some $E_{0}\in \mathbb{R}$, the equation 
$V_{0}(x)=E_{0}$ has exactly four solutions $\alpha_{\ell}<\beta_{\ell}<\beta_{r}<\alpha_{r}$, with $V_{0}'(\alpha_{\ell})<0$, $V_{0}'(\beta_{\ell})>0$, $V_{0}'(\beta_{r})<0$, and $V_{0}'(\alpha_{r})>0$.

\end{description}

Therefore, the operator $P_{h,0}$ on $L^2(\R)$ with domain 
$$
\CD=\{u\in H^2(\R),\; \<x\>^{m_{0}}u\in L^2(\R)\},
$$ 
is a self-adjoint Schr\"odinger operator with a double-well potential, and its spectrum consists only in real eigenvalues with multiplicity 1.

\begin{figure}[!h]
\begin{center}
\def\svgwidth{8cm}
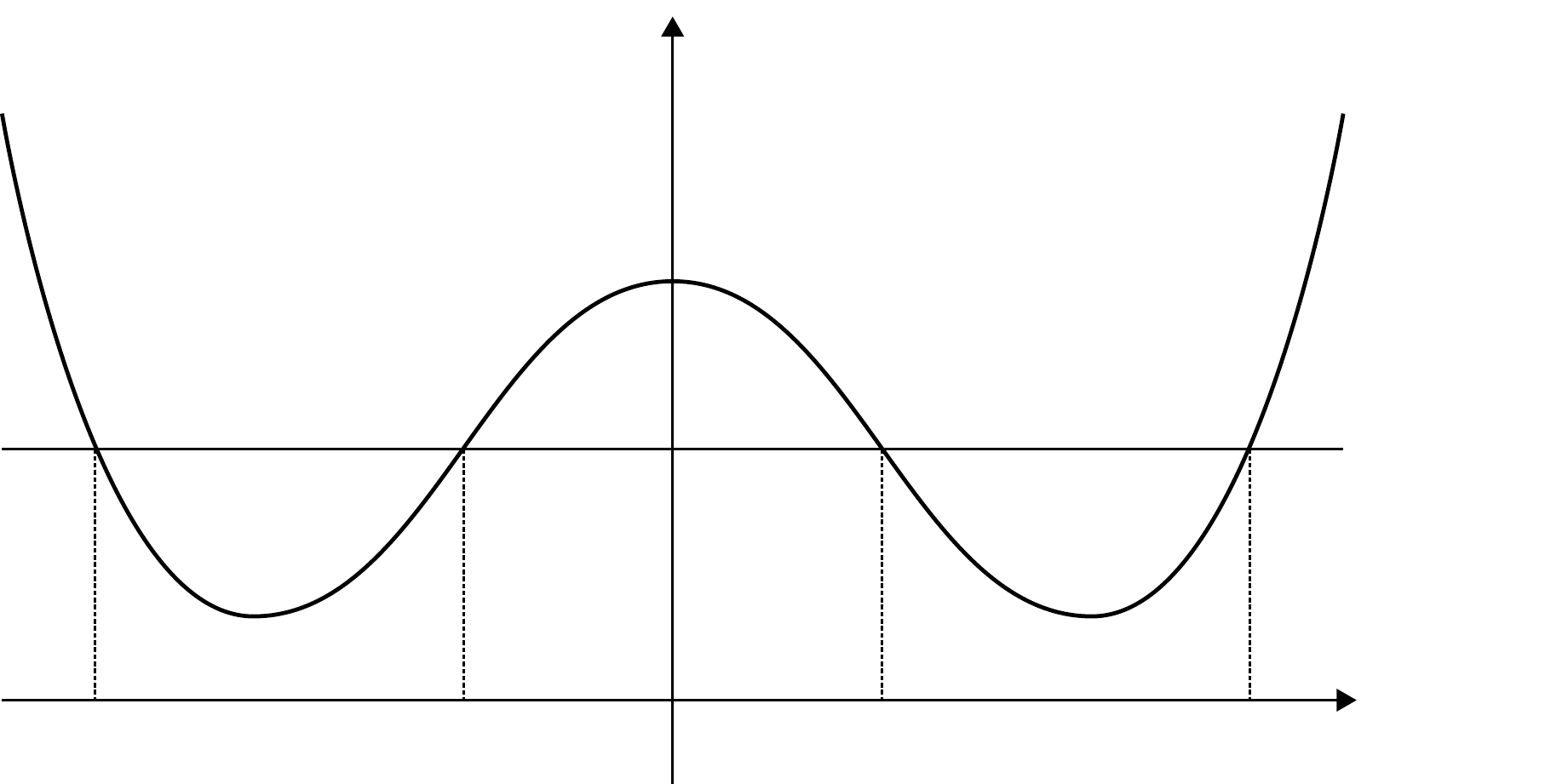
\caption{The unperturbed potential $V_{0}$\label{int.fig.2}}
\end{center}
\end{figure}

Concerning the perturbation $i\varepsilon W$, we suppose that, with $m_{0}$ given in (A2),

\begin{description}
\item [(A4)] $W$ is a $\CC^\infty$, real-valued function on $\R$, such that
$$
\forall k\in \N,\exists C_{k}>0,\; \forall x\in \R,\;  \vert W^{(k)}(x)\vert\leq C_{k}\< x\>^{m_0-k}.
$$
\end{description}

Then, for any $\varepsilon\in ]-\varepsilon_{0},\varepsilon_{0}[$ with
$\varepsilon_{0}>0$ small enough, the spectrum of the unbounded,
closed operator $P_{h,\varepsilon}$ on $L^2(\R)$ with domain $\CD$, is
still discrete in a complex ($h$-independent) neighborhood of $E_{0}$.
We are interested in the semiclassical asymptotics (i.e. as $h\to 0$)
of the eigenvalues of $P_{h,\varepsilon}$ near $E_{0}$, and we want to
know whether they stay real or not for $\varepsilon>0$ 
small, under the following $\CP\CT$-symmetry assumption:

\begin{description}
\item[(A5)] The operator $P_{h,\varepsilon}$ is $\CP\CT$-symmetric, that is
$$
[P_{h,\varepsilon},\CP\CT]=0,
$$
where $\CP$ is the parity operator, and $\CT$ the time-reversal operator given in (\ref{int.1}).
\end{description}

In terms of the real and imaginary part of the potential $V_{\varepsilon}$, this assumption is equivalent to the property that $V_{0}$ is even and $W$ is odd. In particular, when $\varepsilon=0$ we have a symmetric double-well potential, and the turning points satisfy
\begin{equation}
\alpha_{\ell}=-\alpha_{r},\; \beta_{\ell}=-\beta_{r}.
\end{equation}

Since we are going to use complex WKB constructions in a neighborhood of the wells, we need to suppose  also that

\begin{description}
\item[(A6)] $V_{0}$ and $W$ have analytic extensions to a neighboorhood 
$U$ in $\mathbb{C}$ of the convex hull of $\{x\in \R ;\; V_{0}(x)\leq E_{0}\}.$

\end{description}

\begin{remark}\label{int.2}\sl In (A2), and the corresponding assumption on
  $W$, we can also treat the case $m_{0}=0$.
Then  $P_{h,0}$ is still self-adjoint, and its spectrum in $]-\infty,E_{0}+1/C[$ consists also only in eigenvalues.
Our results remain valid in this case.
\end{remark}

In this paper, we will first obtain a Bohr-Sommerfeld like quantization
condition for the eigenvalues of $P_{h,\varepsilon}$ in $D(E_{0},1/C)$. This result does
not rely on the $\CP\CT$-symmetry assumption on $P_{h,\varepsilon}$.

Let us define the relevant action integrals.  By the implicit function
theorem, there exists $\varepsilon_{0}>0$ such that the equation
$V_{\varepsilon}(z)-E=0$ has exactly four solutions for
$(E,\varepsilon)\in D(E_{0},\varepsilon_{0})\times
D(0,\varepsilon_{0})$, that we denote $\alpha_{\ell}(E,\varepsilon)$,
$\beta_{\ell}(E,\varepsilon)$, $\beta_{r}(E,\varepsilon)$ and
$\alpha_{r}(E,\varepsilon)$, depending analytically on $E,\varepsilon $,
in such a way that,
\begin{equation}
\alpha_{\bullet}(E_{0},0)=\alpha_{\bullet},\quad \beta_{\bullet}(E_{0},0)=\beta_{\bullet},\quad
\bullet =\ell,r.
\end{equation}
These analytic functions of $(E,\varepsilon)$ are called turning points at energy $E$. The action integrals are the functions given by 
\begin{eqnarray}
I_{\bullet}(E,\varepsilon) =\int_{\alpha_{\bullet}(E,\varepsilon)}^{\beta_{\bullet}(E,\varepsilon)} (E-V_{\varepsilon}(t))^{\frac{1}{2}} dt,
\label{int.3}
\end{eqnarray}
where $\bullet=\ell,r$ and the branch of $t\mapsto(E-V_{\varepsilon}(t))^{\frac{1}{2}}$ is
the one that is real and positive for $(E,\varepsilon )=(E_0,0)$ on the segment
between $\alpha_{\bullet}$ and $\beta_{\bullet}$, $\bullet=\ell,r$,
and
\begin{equation}
J(E,\varepsilon) = \int_{\beta_{l}(E,\varepsilon)}^{\beta_{r}(E,\varepsilon)} (V_{\varepsilon}(t)-E)^{\frac{1}{2}} dt,
\label{int.4}
\end{equation}
where $t\mapsto(V_{\varepsilon}(t)-E)^{\frac{1}{2}}$ is real and
positive for $(E,\varepsilon)=(E_0,0)$ on the segment between
$\beta_{\ell}$ and $\beta_{r}$. These functions $I_\ell$, $I_r$ and $J$ are analytic
with respect to $E,\,\varepsilon $.

We shall say that a function $f(z,h)$ defined in $\Omega\times ]0,h_{0}]$, analytic with respect to $z\in\Omega$, a domain of $\C^d$, has an asymptotic expansion
$$
f(z,h)\sim \sum_{j\geq 0}f_{j}(z)h^j
$$
in $\Hol(\Omega)$, when the $f_{j}$'s are holomorphic in $\Omega$ and,
for all
compact subsets
 $K\subset \Omega$ and all $N\in \N$,
\begin{equation}\label{int.5}
\Vert f(z,h)-\sum_{j = 0}^N f_{j}(z)h^j\Vert_{L^\infty(K)}=\CO(h^{N+1}).
\end{equation}

For a function $k(E,\varepsilon)$ we shall denote, for $\varepsilon\in \R$,
\begin{equation}
k^*(E,\varepsilon)=\overline{k(\overline E,-\varepsilon)}
\quad \mbox { and }\quad 
k^\dagger (E,\varepsilon )=\overline{k(\overline{E},\varepsilon )}.
\end{equation}

\begin{theorem}
\label{int.6}
\sl Assume (A1)--(A4) and (A6). There exists a fixed neighborhood
$\Omega=\Omega _1\times \Omega _2 $ of $(E_0,0)$ in $\mathbb{C}\times \mathbb{C}$ and
holomorphic functions $\gamma _{\pm}^\ell(E,\varepsilon ;h)$, $\gamma
_{\pm}^r(E,\varepsilon ;h)$ with complete asymptotic expansions in
$\mathrm{Hol\,}(\Omega )$ and of the form $1+{\cal O}(h)$ such that if
\begin{equation}\label{int.7}
\begin{split}
f(E,\varepsilon ;h )=&\frac{1}{4}\left(e^{iI_\ell /h}\gamma _-^\ell +e^{-iI_{\ell} /h}\gamma
  _+^\ell \right)
\left(e^{iI_{r} /h}\gamma _+^r +e^{-iI_{r} /h}\gamma _-^r \right)\\
&-\frac{1}{4}e^{-2J/h}\sin \left(I_{\ell} /h \right) \sin\left(I_{r}/h
\right) ,
\end{split}
\end{equation}
then when $h>0$ is small enough and $\varepsilon \in \Omega _1\cap \mathbb{R}$, the
eigenvalues of $P_\varepsilon $ in $\Omega _1$ coincide with the zeros of
$f(\cdot ,\varepsilon ;h) $ in the same set. The algebraic multiplicity
of each such eigenvalue $E$ coincides with the multiplicity of $E$ as a
zero of $f(\cdot ,\varepsilon ;h)$. 

\par
 We may assume that $\Omega $ is invariant
under the map $(E,\varepsilon )\mapsto (\overline{E},\varepsilon )$, and we have 
\begin{equation}\label{int.8}
  I_\ell^*=I_\ell,\ I_r^*=I_r,\  J^*=J,\ \left (\gamma _-^\ell\right)^*=\gamma
  _+^\ell,\ \left(\gamma _-^r\right)^*=\gamma _+^r. 
\end{equation}

\par Assuming also (A5), we have
\begin{equation}\label{int.9}
\begin{split}
  f(E,\varepsilon )=
\rho \rho ^\dagger \cos (\widetilde{I}/h)\cos (\widetilde{I}^\dagger /h)
-\frac{1}{4}e^{-2J/h}\sin (I/h)\sin (I^\dagger /h) 
,
\end{split}
\end{equation}
where $\rho (E,\varepsilon ;h)$, $\widetilde{I}(E,\varepsilon ;h)$ have
complete asymptotic expansions in $\mathrm{Hol\,}(\Omega )$ with $\rho
=1+{\cal O}(h)$, $\widetilde{I}=I+{\cal O}(h^2)$. Further, $\widetilde{I}^*=\widetilde{I}$, $\rho
^*=\rho $ and $J^*=J=J^\dagger$.
\end{theorem}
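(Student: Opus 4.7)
The strategy is to exploit (A5) in order to pair the left and right data in (\ref{int.7}) via the $\dagger$-operation, and then rewrite the product as a product of cosines. The first part of the theorem yields (\ref{int.7}) together with the symmetries (\ref{int.8}) without invoking $\CP\CT$-symmetry, so the only work is to extract extra symmetry from (A5) and perform the algebraic simplification.

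First I would translate (A5) into the pointwise relation $V_\varepsilon (x)=\overline{V_\varepsilon (-\overline x)}$, equivalently $V_0$ even and $W$ odd. Applying the map $x\mapsto -\overline x$ to the defining equations $V_\varepsilon (\alpha _\bullet )=E$, $V_\varepsilon (\beta _\bullet )=E$ gives $\alpha _r(E,\varepsilon )=-\overline{\alpha _\ell (\overline E,\varepsilon )}$ and likewise for $\beta $. The substitution $t\mapsto -\overline s$ in (\ref{int.3}) and (\ref{int.4}) then yields $I_r=I_\ell ^\dagger $ and $J^\dagger =J$; combined with (\ref{int.8}) one also gets $J^*=J$. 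Writing $I:=I_\ell $, we have $I_r=I^\dagger $. The crucial additional ingredient is the matching statement for the connection coefficients:
\begin{equation*}
\gamma _+^r=(\gamma _-^\ell )^\dagger ,\qquad \gamma _-^r=(\gamma _+^\ell )^\dagger ,
\end{equation*}
which I expect to be the main obstacle to verify: it requires revisiting the WKB construction from the first part of the theorem and checking that the involution on solutions induced by $\CP\CT$ exchanges the two wells and swaps the two connection coefficients in the announced way.

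Granting these relations, I introduce
\begin{equation*}
\rho :=\bigl(\gamma _-^\ell \gamma _+^\ell \bigr)^{1/2},\qquad \widetilde I:=I_\ell +\frac{ih}{2}\log \frac{\gamma _-^\ell }{\gamma _+^\ell },
\end{equation*}
the square root and logarithm being taken on their principal branch, which is legitimate in $\Omega $ since $\gamma _\pm ^\ell =1+\CO(h)$. This gives $\rho =1+\CO(h)$, $\widetilde I=I+\CO(h^2)$, and the factorization $\gamma _\pm ^\ell =\rho \,e^{\pm i(I_\ell -\widetilde I)/h}$. The properties $\rho ^*=\rho $ and $\widetilde I^*=\widetilde I$ follow directly from (\ref{int.8}), and by applying $\dagger $ to the left factorization together with Step 2 one obtains $\gamma _\pm ^r=\rho ^\dagger \,e^{\mp i(I_r-\widetilde I^\dagger )/h}$.

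A direct computation then yields
\begin{equation*}
e^{iI_\ell /h}\gamma _-^\ell +e^{-iI_\ell /h}\gamma _+^\ell =2\rho \cos (\widetilde I/h),\qquad e^{iI_r/h}\gamma _+^r+e^{-iI_r/h}\gamma _-^r=2\rho ^\dagger \cos (\widetilde I^\dagger /h),
\end{equation*}
and substituting into (\ref{int.7}), using $I_\ell =I$ and $I_r=I^\dagger $ in the tunneling term, produces exactly (\ref{int.9}); the remaining properties ($\rho =1+\CO(h)$, $\widetilde I=I+\CO(h^2)$, $\widetilde I^*=\widetilde I$, $\rho ^*=\rho $, $J^*=J=J^\dagger $) have been collected along the way.
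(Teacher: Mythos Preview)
Your approach is essentially the same as the paper's: introduce $\rho=(\gamma_-^\ell\gamma_+^\ell)^{1/2}$ and a phase $\theta$ so that $\gamma_\mp^\ell=\rho e^{\pm i\theta}$, set $\widetilde I=I+h\theta$, and collapse each bracket in (\ref{int.7}) to $2\rho\cos(\widetilde I/h)$ and $2\rho^\dagger\cos(\widetilde I^\dagger/h)$ using the $\dagger$-relations coming from (A5). The paper carries this out in Section~\ref{ev} (equations (\ref{ev.4})--(\ref{ev.14})), after establishing the $\dagger$-relations in Section~\ref{qc} via (\ref{qc.28})--(\ref{qc.31}).

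One correction: the ``crucial additional ingredient'' you state is off by a swap. The paper obtains
\[
\gamma_\pm^r=(\gamma_\pm^\ell)^\dagger,
\]
not $\gamma_+^r=(\gamma_-^\ell)^\dagger$. This is because the $\gamma$'s on the right are defined with the indices already permuted, $v_0^r=\gamma_+^r v_{-1}^r+\gamma_-^r v_1^r$ (see (\ref{sw.31})), and since $\mathrm{Pt}$ exchanges $v_{\pm 1}^\ell\leftrightarrow v_{\mp 1}^r$ and fixes $v_0^\bullet$, applying $\mathrm{Pt}$ to (\ref{sw.30}) yields $\gamma_\pm^r=(\gamma_\pm^\ell)^\dagger$ with no sign flip. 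In fact your own factorization $\gamma_\pm^r=\rho^\dagger e^{\mp i(I_r-\widetilde I^\dagger)/h}$ and the subsequent cosine identity are consistent with the \emph{unswapped} relation, not the swapped one you wrote; with your stated swap the right bracket would become $2\rho^\dagger\cos((2I^\dagger-\widetilde I^\dagger)/h)$ instead. So the argument is correct once this sign is fixed, and it matches the paper's route.
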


\begin{remark}\sl The function  $f$ in (\ref{int.7}) can also be written
\begin{equation}\label{int.10}
f(E,\varepsilon ;h)=\rho_{\ell} \rho_{r}\cos (\widetilde{I}_\ell /h)\cos
(\widetilde{I}_r/h) -\frac{1}{4}e^{-2J/h}\sin (I_\ell /h)\sin (I_r /h),
\end{equation}
where 
$$
\widetilde{I}_{\ell} -I_{\ell},\ \widetilde{I}_{r} -I_{r} ={\cal O}(h^2),\ \
\rho_{\ell} -1,\ \rho_{r}-1={\cal O}(h).
$$
These quantities have complete asymptotic expansions in powers of
$h$ and enjoy the symmetries,
$$
\rho _\ell ^*=\rho _\ell,\ \rho _r ^*=\rho _r, \ \
\widetilde{I}_\ell^*=\widetilde{I}_\ell,\ \widetilde{I}_r^*=\widetilde{I}_r.
$$

Under the additional assumption (A5), we have $\rho _r=\rho _\ell ^*$,
$\widetilde{I}_r=\widetilde{I}_\ell ^*$ and we get (\ref{int.9}) with
$\rho =\rho_\ell$, $I=I_\ell$, $\widetilde{I}=\widetilde{I}_\ell$.
\end{remark}

\par The zeros of $E\mapsto \cos (\widetilde{I}_\ell /h)$ and
$E\mapsto \cos (\widetilde{I}_r /h)$ can be thought of as ``approximate
eigenvalues for the left and right potential wells
respectively''. They are given by the Bohr-Sommerfeld quantization
conditions,
\begin{equation}\label{int.11}
2\widetilde{I}_\ell (E,\varepsilon ;h)=(2k+1)\pi h,\ k\in \mathbb{Z},
\end{equation}
and
\begin{equation}\label{int.12}
2\widetilde{I}_r (E,\varepsilon ;h)=(2k+1)\pi h,\ k\in \mathbb{Z},
\end{equation}
respectively. The last term in (\ref{int.10}) is exponentially small
and represents the tunneling interaction between the potential wells.
The solutions
$E=\widetilde{E}_k^\bullet (\varepsilon ;h)$ of (\ref{int.11}), (\ref{int.12})
are situated on the curves $\widetilde{\Gamma }_\bullet (\varepsilon ;h)$
 defined by 
\begin{equation}\label{int.13}
\im\widetilde{I}_\bullet (E,\varepsilon ;h)=0,
\end{equation}
respectively, and the distance between consecutive zeros
$\widetilde{E}^\bullet _k$ and $\widetilde{E}^\bullet _{k+1}$ is of the
order $h$. Here, we use that, for $\bullet=\ell,r$,
$$
\partial _EI_\bullet =\frac{1}{2}\int_{\alpha _\bullet }^{\beta _\bullet
}(E-V_\varepsilon (x))^{-\frac{1}{2}} dx\ne 0.
$$
When $\varepsilon =0$,  $\widetilde{\Gamma
}_\bullet$ are real neighborhoods of $E_0$ and the $\widetilde{E}^\bullet_k$
 are real. We notice that
$$
\re \widetilde{E}^\ell_k(\varepsilon )=E_k^\ell (0)+{\cal O}(\varepsilon ^2).
$$

It is clear that the set of zeros of $f(\cdot ,\varepsilon ;h)$ is (in a
suitable sense) exponentially close to the union of the solutions of
(\ref{int.11}) and (\ref{int.12}). Below we give a such a detailed
result in the ${\cal PT}$-symmetric case. 

Now we adopt the assumptions (A1)--(A6) as well as the following
assumption:
\begin{description}
\item[(A7)] We have
$$
\int_{\alpha _\ell}^{\beta _\ell}(E_0-V_0(x))^{-\frac{1}{2}}W(x)dx\ne 0.
$$
\end{description}
Notice here that 
$$
2i\partial _\varepsilon I_{\ell} (E,\varepsilon )=\int_{\alpha _\ell}^{\beta
  _\ell}(E-V_\varepsilon (x))^{-\frac{1}{2}}W(x)dx.
$$
Possibly after the substitution $(\varepsilon ,W)\mapsto (-\varepsilon
,-W)$, which does not change $P_\varepsilon $, we may assume that the
integral in (A7) is $>0$.

\par Under the assumption (A7), we have $\tilde I_{\ell}^\dagger =\tilde I_{r}$ and hence for
real $\varepsilon $, that  $\widetilde{E}_k^r=\overline{\widetilde{E}_k^\ell}$.


\begin{theorem}\label{int.14}\sl We make the assumptions (A1) to
  (A7). The values $\widetilde{E}_k:=\widetilde{E}_k^\ell$ and
  $\widetilde{E}_k^r=\overline{\widetilde{E}_k}$ are situated on
  the curves $\widetilde{\Gamma }=\widetilde{\Gamma }_\ell$ and
  $\overline{\widetilde{\Gamma }}=\widetilde{\Gamma }_r$, where
  $\widetilde{\Gamma }$ is of the form
\begin{equation}\label{int.15}
\im E=\widetilde{g}(\re E,\varepsilon ;h).
\end{equation}
Here
\begin{equation}\label{int.16}
\widetilde{g}(t,\varepsilon ;h)\sim g(t,\varepsilon)+hg_1(t,\varepsilon
)+...\hbox{ in }\mathrm{Hol\,}(\mathrm{neigh\,}(E_0,0),\mathbb{C}^2)
\end{equation}
is real for $(t,\varepsilon )$ real, and we have,
\begin{equation}\label{int.17}
\widetilde{g}(t,\varepsilon )=\left(\frac{i\partial _\varepsilon I }{\partial
    _EI}(t,0)+{\cal O}(h^2) \right) \varepsilon +{\cal O}(\varepsilon ^2).
\end{equation}

\par There exists a fixed neighborhood $\Omega =\Omega _1\times \Omega
_2$ of $(E_0,0)\in \mathbb{C}\times \mathbb{R}$, such that for
$\varepsilon \in \Omega _2$ and for $h>0$ small enough,
\begin{equation}\label{int.18}
f^{-1}(0,\varepsilon )\subset \bigcup_k
D\left( \widetilde{E}_k,r(\widetilde{E}_k,\varepsilon )\right) \cup
\bigcup_{k}D\left(\overline{\widetilde{E}_k},r\left(\overline{\widetilde{E}_k},\varepsilon
  \right) \right),
\end{equation}
where $r(E,\varepsilon )\le Ch e^{-\re J(E,\varepsilon )/h}$ is given by
\begin{equation}\label{int.19}
r(E,\varepsilon )=Ch \min \left( 1,\max (h/\varepsilon ,1)e^{-\re
    J(E,\varepsilon )/h} \right) e^{-\re J(E,\varepsilon )/h}.
\end{equation}
and $C>0$ is large enough.
Moreover,
\begin{itemize}
\item when these discs are disjoint, $f(\cdot ,\varepsilon )$ has precisely one zero in each of
  $D\left(\widetilde{E}_k,r\left(\widetilde{E}_k,\varepsilon \right)\right)$ and\- $D\left(\overline{\widetilde{E}_k},r\left(\overline{\widetilde{E}_k},\varepsilon
  \right) \right)$.
\item in general $f(\cdot ,\varepsilon )$ has precisely 2 zeros in
  $$D\left(\widetilde{E}_k,r\left(\widetilde{E}_k,\varepsilon
\right)\right)\cup
    D\left(\overline{\widetilde{E}_k},r\left(\overline{\widetilde{E}_k},\varepsilon
      \right) \right).$$
\end{itemize}
\end{theorem}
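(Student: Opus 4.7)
My plan proceeds in three steps, corresponding to: (a) constructing the curve $\widetilde{\Gamma}_\ell$ and deriving (\ref{int.17}) for $\widetilde{g}$; (b) transferring the description to $\widetilde{\Gamma}_r$ via the $\dagger$-symmetry; (c) localizing the zeros of $f$ with radius (\ref{int.19}) by a Rouch\'e argument based on the decomposition (\ref{int.9}).

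For (a), since $\partial_E I_\ell(E_0,0) = \tfrac{1}{2}\int_{\alpha_\ell}^{\beta_\ell}(E_0-V_0)^{-1/2}\,dx > 0$ and $\widetilde{I}_\ell = I_\ell + {\cal O}(h^2)$ in $\mathrm{Hol\,}(\Omega)$, $\partial_E \widetilde{I}_\ell$ is non-vanishing near $(E_0,0)$ uniformly in $h$; the analytic implicit function theorem applied to $\im \widetilde{I}_\ell(E,\varepsilon;h) = 0$ then produces $\widetilde{g}(\re E,\varepsilon;h)$, with the asymptotic expansion (\ref{int.16}) inherited from that of $\widetilde{I}_\ell$. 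The symmetry $\widetilde{I}_\ell^* = \widetilde{I}_\ell$ forces $\widetilde{g}(\cdot,0;h) \equiv 0$; differentiating $\im \widetilde{I}_\ell(t + i\widetilde{g},\varepsilon;h) = 0$ in $\varepsilon$ at $\varepsilon = 0$ and using that $2i\partial_\varepsilon I_\ell(E,0) = \int_{\alpha_\ell}^{\beta_\ell} W(x)(E-V_0)^{-1/2}\,dx$ is real and positive (by (A7) after the sign normalization) yields (\ref{int.17}). Step (b) is then immediate from $\widetilde{I}_\ell^\dagger = \widetilde{I}_r$: the curve $\widetilde{\Gamma}_r$ is the complex conjugate of $\widetilde{\Gamma}_\ell$, and $\widetilde{E}_k^r = \overline{\widetilde{E}_k}$.

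For (c), I write, following (\ref{int.9}),
\begin{equation*}
f = f_0 + f_1,\quad f_0 := \rho\,\rho^\dagger \cos(\widetilde{I}/h)\cos(\widetilde{I}^\dagger/h),\quad f_1 := -\tfrac{1}{4}e^{-2J/h}\sin(I/h)\sin(I^\dagger/h).
\end{equation*}
The zeros of $f_0$ are exactly $\{\widetilde{E}_k\} \cup \{\overline{\widetilde{E}_k}\}$, all simple (since $\rho = 1 + {\cal O}(h)$ and $\partial_E \widetilde{I} \neq 0$). Setting $\sigma := 2|\im \widetilde{E}_k|\,\partial_E I(\widetilde{E}_k)/h \asymp \varepsilon/h$, first-order Taylor expansions at $\widetilde{E}_k$ give, on $|E - \widetilde{E}_k| = r$ with $r \lesssim h$,
\begin{equation*}
|f_0(E)| \gtrsim \frac{r}{h}\left(\sinh\sigma + \frac{r}{h}\right),\qquad |f_1(E)| \lesssim e^{-2\re J/h}\cosh\sigma.
\end{equation*}
Three regimes now appear. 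If $\sigma \gtrsim 1$ (i.e.\ $\varepsilon \gtrsim h$), $\cosh\sigma \asymp \sinh\sigma$ and the choice $r = Che^{-2\re J/h}$ gives $|f_0| > |f_1|$; the discs $D(\widetilde{E}_k,r)$ and $D(\overline{\widetilde{E}_k},r)$ are at distance $\gtrsim \varepsilon \gg r$, and Rouch\'e provides one zero of $f$ in each. If $\sigma \ll 1$ with the discs still disjoint, $\cosh\sigma \asymp 1$ and $(r/h)\sinh\sigma \asymp (r/h)(\varepsilon/h)$ is dominant; balancing with the right-hand side forces $r \asymp (h^2/\varepsilon)\,e^{-2\re J/h}$, and again one obtains one zero per disc. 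Finally, when $\varepsilon \ll he^{-\re J/h}$ the two discs must overlap, and the choice $r = Che^{-\re J/h}$ makes $(r/h)^2 > e^{-2\re J/h}$ on the outer boundary of the simply-connected union $D(\widetilde{E}_k,r) \cup D(\overline{\widetilde{E}_k},r)$; Rouch\'e on this contour then yields exactly two zeros. The containment (\ref{int.18}) follows from the complementary estimate $|f_0| \gg |f_1|$ outside all these discs, which uses standard lower bounds on $|\cos(\widetilde{I}/h)|$ away from the $\widetilde{E}_k$ and on $|\cos(\widetilde{I}^\dagger/h)|$ away from the $\overline{\widetilde{E}_k}$.

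The main obstacle is matching the three regimes uniformly with the single formula (\ref{int.19}): the factor $\max(h/\varepsilon,1)$ there encodes the crossover between $\sinh\sigma \asymp \sigma$ and $\sinh\sigma \asymp e^\sigma/2$ in the dominant term of the lower bound for $|f_0|$, and the transition $\varepsilon \sim he^{-\re J/h}$, where the conjugate discs begin to coalesce, will require a unified Rouch\'e argument on the merged contour — the most delicate step in the verification.
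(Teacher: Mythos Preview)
Your proposal is correct and follows the paper's strategy: implicit-function construction of $\widetilde\Gamma$ from $\im\widetilde I=0$, then a Rouch\'e comparison of $f_0=\rho\rho^\dagger\cos(\widetilde I/h)\cos(\widetilde I^\dagger/h)$ against the tunneling term. The paper's execution of (c) is slightly slicker: rather than Taylor-expanding at $\widetilde E_k$ and tracking $\sinh\sigma$, $\cosh\sigma$ through three regimes, it invokes the global estimate $|\cos z|\asymp\min(\mathrm{dist}(z,(\Z+\tfrac12)\pi),1)\,e^{|\im z|}$ together with $|\sin z|\le 2e^{|\im z|}$, so that the exponential weights $e^{(|\im I|+|\im I^\dagger|)/h}$ in $|f_0|$ and $|f_1|$ cancel at once and the condition $f\ne0$ reduces to
\[
\min\!\Big(\tfrac1h\,\mathrm{dist}(E,\{\widetilde E_k\}),1\Big)\min\!\Big(\tfrac1h\,\mathrm{dist}(E,\{\overline{\widetilde E_k}\}),1\Big)\gg e^{-2\re J/h};
\]
a two-step refinement (first radius $Che^{-\re J/h}$, then use $\mathrm{dist}(E,\overline{\widetilde E_k})\asymp\varepsilon$ inside) produces (\ref{int.19}) without case splitting. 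For the count, the paper multiplies the tunneling term by a parameter $\theta\in[0,1]$ and notes that the above inclusion is $\theta$-independent, so the number of zeros in each connected component of the union of discs is constant in $\theta$ --- this is your Rouch\'e argument repackaged, but it absorbs the coalescence threshold $\varepsilon\sim he^{-\re J/h}$ automatically. One caveat: your displayed bound $|f_0|\gtrsim\tfrac{r}{h}(\sinh\sigma+\tfrac{r}{h})$ fails on $|E-\widetilde E_k|=r$ when the circle approaches $\overline{\widetilde E_k}$; since you invoke it only in the disjoint-disc regimes your argument survives, but the paper's global estimate avoids this wrinkle.
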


\par
We finally discuss the more precise behaviour of the eigenvalues when
$|\varepsilon |$ is exponentially small. The function
$$
f/(\rho \rho ^\dagger )=\cos \left( \frac{\widetilde{I}}{h} \right)\cos
\left( \frac{\widetilde{I}^\dagger}{h}\right)-\frac{1
}{4\rho \rho ^\dagger}e^{-2J/h}
\sin \left( \frac{I}{h}\right)\sin
\left( \frac{I^\dagger}{h}\right) ,
$$
is real-valued on the real axis and has a sequence of local minima
$E_k(\varepsilon ;h)$ such that
$$
E_k(0;h)=E_k^\ell (0;h)+{\cal O}\left(he^{-2J(E_k^\ell (0;h),0)/h} \right),
$$
where $E_k^\ell (0;h)=E_k^r(0;h)$ is defined in (\ref{int.11}),
(\ref{int.12}) and we know that when $|\varepsilon |\le e^{-1/(Ch)}$,
there are two eigenvalues of $P_\varepsilon $ exponentially close to
$E_k(0;h)$ and that we obtain in this way all the eigenvalues in a
fixed neighborhood of $E_0$. It will be convenient to introduce a
``Floquet parameter'' $\kappa \in \mathbb{R}$ and to set 
\begin{align*}
\widetilde{f}(E,\varepsilon &,\kappa ;h) =
\\
&\cos \left( \frac{\widetilde{I}}{h}-\kappa \right)\cos
\left( \frac{\widetilde{I}^\dagger}{h}-\kappa \right)-\frac{e^{-2J/h}
}{4\rho \rho ^\dagger}
\sin \left( \frac{I}{h}-\kappa \right)\sin
\left( \frac{I^\dagger}{h}-\kappa \right).
\end{align*}
We still have a sequence of local minima $E_k(\varepsilon ,\kappa ;h)$
satisfying $E_k(\varepsilon ,\kappa +\pi ;h)=E_{k+1}(\varepsilon ,\kappa
;h)$ and the zeros of $\widetilde{f}(\cdot ,\varepsilon ,\kappa ;h)$ are
now confined, two by two, to exponentially small neighborhoods of
$E_k(\varepsilon ,\kappa ;h)$. We concentrate on one such local minimum
$E_c(\varepsilon ,\kappa ;h)=E_k(\varepsilon ,\kappa ;h)$ and we restrict the attention to a ``window''
\begin{equation}\label{int.20}
E=E_1+hF,\ \ \varepsilon =h\widetilde{\varepsilon },\ \ \kappa
=\widetilde{\kappa }+I(E_1,0)/h,
\end{equation}
where $E_1$ is a real parameter $\in
\mathrm{neigh\,}(E_0,\mathbb{R})$. Assume that $E_c(\varepsilon ,\kappa
;h)$ belongs to the window, so that 
\begin{equation}\label{int.21}
E_c(\varepsilon ,\kappa;h)=E_1+hF_c(\widetilde{\varepsilon } ,\widetilde{\kappa };h),
\end{equation}
where $F_c$ is the corresponding critical point of $\widetilde{f}$ in the variables $F$ with
$\widetilde{\varepsilon }$, $\widetilde{\kappa }$ as the new
parameters. Thanks to the rescaling, this critical point is uniformly
nondegenerate. $\widetilde{f}$ and $F_c$ are even functions of
$\widetilde{\varepsilon }$ and it follows that
\begin{equation}\label{int.22}
F_c(\widetilde{\varepsilon },\widetilde{\kappa
};h)=F_c(0,\widetilde{\kappa };h)+{\cal O}(\widetilde{\varepsilon }^2).
\end{equation}

 We make the assumptions (A1)--(A7) and
  discuss the zeros of $\widetilde{f} $ near the critical value
  $E_c(\varepsilon,\kappa  ;h)=E_1+hF_c(\widetilde{\varepsilon
  },\widetilde{\kappa };h)$. The key point is that, in this regime, we are able to write the quantization condition as a second order polynomial (up to a non-vanishing factor), with a sharp control on the coefficients.

\begin{theorem}\label{int.23}\sl 
The critical value $\widetilde{f}^c(\widetilde{\varepsilon
},\widetilde{\kappa };h)=\widetilde{f}(F_c,\widetilde{\varepsilon
},\widetilde{\kappa };h)$ is of the form
\begin{equation}\label{int.24}
\widetilde{f}^c(\widetilde{\varepsilon },\widetilde{\kappa
};h)=m(\widetilde{\varepsilon },\widetilde{\kappa
};h)(\widetilde{\varepsilon }^2-\widetilde{\varepsilon }
_c(\widetilde{\kappa };h)^2),
\end{equation}
where
\begin{equation}\label{int.25}
\widetilde{\varepsilon }_c=\ell (\widetilde{\kappa };h)e^{-J(E_c(0,\kappa
),1)/h}.
\end{equation}
Here, $\ell$, $m$ are classical symbols of order $0$ as in (\ref{int.5}),
with leading terms satisfying
\begin{equation}\label{int.26}
\ell (\widetilde{\kappa };0)=\frac{1}{2|\partial _\varepsilon I(E_1,0)|},\
\ m(0,\widetilde{\kappa };0)=|\partial _{\varepsilon }I(E_1,0)|.
\end{equation}
Further,
\begin{equation}\label{int.27}
{f}(F,\widetilde{\varepsilon },\widetilde{\kappa
};h)=\widetilde{f}^c(\widetilde{\varepsilon }, \widetilde{\kappa
};h)+q(F,\widetilde{\varepsilon } ,\widetilde{\kappa };h)(F-F_c(\widetilde{\varepsilon },\widetilde{\kappa },1;h))^2,
\end{equation}
where $q$ is a symbol of order $0$ and 
\begin{equation}\label{int.28}
q(F_c,0,\widetilde{\kappa };0)=2(\partial _E\widetilde{I}(E_c(0,\kappa ,0),0))^2.
\end{equation}
\par $\widetilde{f}(\cdot ,\widetilde{\varepsilon },\widetilde{\kappa
};h)$ has two zeros in a small neighborhood of $F_c$ when counted with
their multiplicity, and
\begin{itemize}
\item when $|\widetilde{\varepsilon }|<\widetilde{\varepsilon
  }_c(\widetilde{\kappa };h)$ the zeros are real and simple, given by
\begin{equation}\label{int.29}
q(F,\widetilde{\varepsilon },\widetilde{\kappa
};h)^{\frac{1}{2}}(F-F_c(\widetilde{\varepsilon },\widetilde{\kappa
},1;h))=\pm (-\widetilde{f}^c(\widetilde{\varepsilon },\widetilde{\kappa
};h))^{\frac{1}{2}}
\end{equation}
\item when $|\widetilde{\varepsilon }|=\widetilde{\varepsilon
  }_c(\widetilde{\kappa };h)$ we have a double zero, 
\begin{equation}\label{int.30}
F=F_c.
\end{equation}
\item when $|\widetilde{\varepsilon }|>\widetilde{\varepsilon
  }_c(\widetilde{\kappa };h)$ the zeros are simple, non-real and
  complex conjugate to each other, given by
\begin{equation}\label{int.31}
q(F,\widetilde{\varepsilon },\widetilde{\kappa
};h)^{\frac{1}{2}}(F-F_c(\widetilde{\varepsilon },\widetilde{\kappa
},1;h))=\pm i (\widetilde{f}^c(\widetilde{\varepsilon },\widetilde{\kappa
};h))^{\frac{1}{2}}.
\end{equation}
\end{itemize}
When $\kappa =\widetilde{\kappa }+I(E_1,0)/h$ belongs to $\pi
\mathbb{Z}$, these values give the eigenvalues of $P_\varepsilon $ near
$E_c(\varepsilon ,\kappa ;h)$ via (\ref{int.20}).
\end{theorem}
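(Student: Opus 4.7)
The strategy is to substitute the window variables (\ref{int.20}) into the explicit formula (\ref{int.9}) for $\widetilde{f}$, use the PT-induced evenness in $\widetilde{\varepsilon}$ together with the uniform nondegeneracy of $F_c$, and reduce everything to a two-variable quadratic polynomial with symbol-valued coefficients.

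First I would rewrite $\widetilde{f}$ in the window variables. Taylor-expanding $\widetilde{I}, \widetilde{I}^\dagger, I, I^\dagger, J, \rho, \rho^\dagger$ around $(E_1,0)$ one obtains $\widetilde{f}$ as a smooth function of $(F, \widetilde{\varepsilon}, \widetilde{\kappa})$ whose coefficients are classical symbols in $h$. The invariance of $P_\varepsilon$ under $(\varepsilon, W)\mapsto(-\varepsilon, -W)$ lifts to $\widetilde{f}(F, -\widetilde{\varepsilon}, \widetilde{\kappa}; h) = \widetilde{f}(F, \widetilde{\varepsilon}, \widetilde{\kappa}; h)$, so $F_c$ and $\widetilde{f}^c$ are smooth functions of $\widetilde{\varepsilon}^2$; in particular $\partial_{\widetilde{\varepsilon}} F_c|_0 = 0$. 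I will also use that $\partial_\varepsilon I(E_1,0)$ is pure imaginary and that $\partial_\varepsilon I^\dagger(E_1,0) = -\partial_\varepsilon I(E_1,0)$, both consequences of the formula just below (A7) together with $\widetilde{I}^\dagger = \overline{\widetilde{I}}$ on the real axis. Since the rescaling makes $F_c$ a uniformly nondegenerate critical point of $F \mapsto \widetilde{f}$, the integral form of Taylor's theorem then yields
\[
\widetilde{f}(F, \widetilde{\varepsilon}, \widetilde{\kappa}; h) = \widetilde{f}^c(\widetilde{\varepsilon}, \widetilde{\kappa}; h) + q(F, \widetilde{\varepsilon}, \widetilde{\kappa}; h)(F - F_c)^2,
\]
with $q$ a classical symbol of order zero. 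Evaluating $q$ at $(F_c, 0, \widetilde{\kappa}, 0)$ amounts to computing $\tfrac12 \partial_F^2 \widetilde{f}|_{F_c, 0}$: at $\widetilde{\varepsilon} = 0$, $F = F_c$, one has $\cos(\widetilde{I}/h - \kappa) = \mathcal{O}(e^{-2J/h})$ and $\sin^2(\cdots) = 1 + \mathcal{O}(e^{-2J/h})$, so the dominant contribution comes from $\partial_F^2 \cos^2(\widetilde{I}/h - \kappa) = 2(\partial_E \widetilde{I}(E_c,0))^2 + \mathcal{O}(h)$, giving (\ref{int.28}).

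Next I would extract the factorization of $\widetilde{f}^c$. By evenness, $\widetilde{f}^c(\widetilde{\varepsilon}, \widetilde{\kappa}; h) = \widetilde{f}^c(0, \widetilde{\kappa}; h) + \widetilde{\varepsilon}^2 A(\widetilde{\varepsilon}^2, \widetilde{\kappa}; h)$ with $A$ a classical symbol of order zero. Direct evaluation at $\widetilde{\varepsilon} = 0$ gives $\widetilde{f}^c(0, \widetilde{\kappa}; h) = -e^{-2J/h}/(4\rho^2)(1 + \mathcal{O}(h))$, because the first term of $\widetilde{f}$ is exponentially small at $F_c$. The leading value of $A$ at $\widetilde{\varepsilon} = 0$ is $\tfrac12 \partial_{\widetilde{\varepsilon}}^2 \widetilde{f}|_{F_c, 0}$; using $\partial_\varepsilon \widetilde{I}^\dagger = -\partial_\varepsilon \widetilde{I}$ together with $\sin^2 \approx 1$ at $F_c$, the cross term in $\partial_{\widetilde{\varepsilon}}^2 \bigl(\cos(\widetilde{I}/h-\kappa)\cos(\widetilde{I}^\dagger/h-\kappa)\bigr)$ dominates and produces a positive real value with the leading behaviour compatible with (\ref{int.26}). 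Setting $m = A$ and $\widetilde{\varepsilon}_c^2 = -\widetilde{f}^c(0)/m$ then establishes (\ref{int.24}); taking the positive square root gives (\ref{int.25}) with $\ell$ a classical symbol.

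Finally, the trichotomy of zeros is immediate from the factored form: $\widetilde{f} = 0$ near $F_c$ becomes $q(F - F_c)^2 = m(\widetilde{\varepsilon}_c^2 - \widetilde{\varepsilon}^2)$, and since $q, m > 0$ to leading order, the three cases (\ref{int.29})--(\ref{int.31}) follow by inspection of the sign of the right-hand side, with the appropriate square root extracted via positivity of $q$. The identification with eigenvalues of $P_\varepsilon$ for $\kappa \in \pi\mathbb{Z}$ uses (\ref{int.20}) together with Theorem \ref{int.6}. The main obstacle I anticipate is symbolic bookkeeping: one must ensure that all $\mathcal{O}(h)$ and $\mathcal{O}(e^{-2J/h})$ remainders arising from the Taylor expansions and from the asymptotic expansions of $\widetilde{I}, I, \rho, J$ can be absorbed into bona fide classical symbols $\ell, m, q$ with complete asymptotic expansions, rather than just controlled at leading order; this depends on systematic use of evenness in $\widetilde{\varepsilon}$ and of the analyticity assumption (A6).
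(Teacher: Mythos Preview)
Your approach is essentially the one the paper takes: both arguments rest on evenness in $\widetilde{\varepsilon}$, a Taylor expansion in $F$ about the nondegenerate critical point $F_c$ producing the $q(F-F_c)^2$ structure, and a Taylor expansion of $\widetilde{f}^c$ in $\widetilde{\varepsilon}^2$ yielding the factorization $m(\widetilde{\varepsilon}^2-\widetilde{\varepsilon}_c^2)$. The paper organizes the computation slightly differently, introducing a deformation parameter $\theta\in[0,1]$ in front of the tunneling term $e^{-2\widetilde{J}/h}\sin(I/h-\kappa)\sin(I^\dagger/h-\kappa)$; the second derivatives in $E$ and $\varepsilon$ are computed at $\theta=0$, where $F_c$ is \emph{exactly} a zero of $\cos(\widetilde{I}/h-\kappa)$, and then $\partial_\theta f_\theta^c$ is integrated from $0$ to $1$ to recover the critical value of the full $f$. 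You bypass this and work directly at $\theta=1$; that is legitimate because the shift of $F_c$ from the cosine zero is $\mathcal{O}(e^{-2J/h})$ and the resulting corrections are exponentially small and absorbed into the symbols, but the $\theta$-deformation makes the bookkeeping you flag at the end (that all remainders assemble into genuine classical symbols) more transparent.

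One step in your argument is wrong and needs replacing. Your justification of the evenness $\widetilde{f}(F,-\widetilde{\varepsilon},\widetilde{\kappa};h)=\widetilde{f}(F,\widetilde{\varepsilon},\widetilde{\kappa};h)$ via ``invariance of $P_\varepsilon$ under $(\varepsilon,W)\mapsto(-\varepsilon,-W)$'' is vacuous: that substitution is the identity on $P_\varepsilon$ and carries no information about $f$ as a function of $\varepsilon$ alone. The evenness for real $E$ comes instead from combining the two symmetries $f^*=f$ and $f^\dagger=f$: the first gives $f(E,-\varepsilon)=\overline{f(E,\varepsilon)}$ for real $E$, the second gives $f(E,\varepsilon)\in\mathbb{R}$ for real $E$, and together they yield $f(E,-\varepsilon)=f(E,\varepsilon)$. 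This is exactly how the paper argues it just after (\ref{ev.37.5}), and it is what makes $F_c$ and $\widetilde{f}^c$ functions of $\widetilde{\varepsilon}^2$, on which the rest of your proof relies.
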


Eventually, we would like to mention the paper \cite{GeGr88} by C. G\'erard and A. Grigis,  where the authors study the eigenvalues of self-adjoint Schr\"odinger operators with a double well potential. They also obtain a quantization condition, using what they call the "exact WKB method". Our method here is slightly different and more explicit about the connection formulas at the turning points.

\section{The complex WKB method}
\label{wkb}

We recall here briefly elements of the complex WKB method in a general setting. Consider a Schr\"{o}dinger equation,
\begin{equation}
-h^2u''(z,h) + V(z) u(z,h) =0,
\label{wkb.1}
\end{equation}
in a bounded, simply connected  open set $U\subset \C$ where the potential $V$ is holomorphic. 
We look for a solution of the type
\begin{equation}
u(z,h)=a(z,h)e^{i\varphi (z)/h}, 
\label{wkb.2}
\end{equation}
where $a(z,h)$ has a formal asymptotic expansion in a sense to be defined later on,
\begin{equation}
a(z,h)\sim \sum\limits_{j=0}^{+\infty }a_{j}(z)h^{j},
\label{wkb.3}
\end{equation}
and the $a_{j}$'s are holomorphic functions in $U$. The function $\varphi$ is called the phase of the solution $u$, and $a(z,h)$ is called its symbol.

A function $u(z,h)$ of the form (\ref{wkb.2}) is a solution to (\ref{wkb.1}) if and only if
\begin{equation}
e^{-i\varphi (z)/h}(-h^{2}\partial _{z}^{2}+V(z))(e^{i\varphi (z)/h}a(z,h))=0,
\label{wkb.4}
\end{equation}
or
\begin{equation}
(-(h\partial _{z})^{2}-2i\varphi'(z)h\partial _{z}-ih{\varphi''}(z)+\varphi'(z)^{2}+V(z))a(z,h) =0.
\end{equation}
If $\varphi$ is a solution of the  eikonal equation
\begin{equation}
\varphi'(z)^{2}+V(z)=0, 
\label{wkb.5}
\end{equation}
then (\ref{wkb.4}) is equivalent to
\begin{equation}
\left (
\varphi' (z)\partial _{z}+\frac{\varphi'' (z)}{2}-\frac{ih}{2}\partial _{z}^{2}
\right )
a(z,h) =0.
\label{wkb.6}
\end{equation}
Replacing $a(z,h)$ by its formal asymptotic expansion (\ref{wkb.3}), and canceling successively the powers of $h$, we obtain a sequence of transport equations
\begin{equation}
\label{wkb.7}
\left\{
\begin{array}{l}
\ds
\left (\varphi' (z)\partial _{z}+\frac{1}{2}\varphi''(z)\right )a_{0}=0,
\\[10pt]
\ds 
\left (\varphi'(z) \partial _{z}+\frac{1}{2}\varphi''(z) \right )a_{j} 
=
\frac{i}{2}a''_{j-1}, \mbox{ for } j\geq 1.
\end{array}
\right .
\end{equation}

\begin{definition}\sl
A formal WKB solution $u_{wkb}$ of the equation (\ref{wkb.1}) in $U$ is a pair $(\varphi,(a_{j}))$ of an analytic function $\varphi$ in $U$ verifying the eikonal equation (\ref{wkb.5}), and of a sequence $(a_{j})$ of analytic functions in $U$ which satisfies the transport equations (\ref{wkb.7}). We denote it 
\begin{equation}
\label{wkb.8}
u_{wkb}(z,h)=e^{i\varphi(z)/h}\sum_{j\geq 0}a_{j}(z)h^j.
\end{equation}
\end{definition}

We suppose from now on that $V(z)\neq 0$ for all $z\in U$. Then we fix a determination of $z\mapsto (-V(z))^{\frac{1}{2}}$ in $U$, and we solve the eikonal and transport equations in $U$.

\begin{proposition}\sl
\label{wkb.9}
The solutions of the eikonal equation (\ref{wkb.5}) are analytic functions in $U$, and they 
 can be written
\begin{equation}
\varphi (z)=\pm \int_{z_{0}}^{z}(-V(w))^{\frac{1}{2}}dw  +C
\label{wkb.10}
\end{equation}
for some $z_{0}\in U$, and some $C\in \C$.
\end{proposition}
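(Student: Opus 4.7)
The plan is to reduce the eikonal equation $\varphi'(z)^2 = -V(z)$ to a first-order equation by extracting a square root, to fix the sign globally via a connectedness argument, and then to integrate using the simple connectedness of $U$.

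First I would use the assumption $V \neq 0$ on $U$ together with the simple connectedness of $U$ to note that a holomorphic branch of $\log(-V(z))$ — and hence of $(-V(z))^{1/2}$ — exists on $U$; by hypothesis we have already fixed one such branch. This gives a holomorphic function $S(z) := (-V(z))^{1/2}$ on $U$ with $S(z)^2 = -V(z)$ and $S(z) \neq 0$ everywhere.

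Next, given a solution $\varphi$, the identity $\varphi'(z)^2 = -V(z) = S(z)^2$ means that pointwise $\varphi'(z) \in \{+S(z),\, -S(z)\}$. Because $S(z) \neq 0$, these two values are always distinct, and by continuity of $\varphi'$ the subsets of $U$ on which each alternative is realized are both closed in $U$. Connectedness of $U$ then forces one of them to be empty, so there exists a global sign $\epsilon \in \{+1,-1\}$ with $\varphi'(z) = \epsilon\, S(z)$ throughout $U$. In particular $\varphi'$ is holomorphic, hence so is $\varphi$, which gives the analyticity part of the statement. (If one only assumes $\varphi \in C^1$ to start with, this step also upgrades $\varphi$ to a holomorphic function.)

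Finally, I would fix any $z_0 \in U$ and invoke simple connectedness once more: by Cauchy's theorem the integral $F(z) := \int_{z_0}^z S(w)\, dw$ is path-independent and defines a holomorphic primitive of $S$ on $U$. Then $\varphi - \epsilon F$ has vanishing derivative on the connected set $U$ and is therefore equal to the constant $C := \varphi(z_0)$, which is exactly the formula (\ref{wkb.10}). The converse is immediate by differentiation. The only subtle point is the global determination of the sign $\epsilon$, resolved by the connectedness argument; there is no substantial obstacle, the proposition being a classical consequence of simple connectedness used twice — once to globalize the square root, once for path-independence of the primitive.
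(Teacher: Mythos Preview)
Your proof is correct and complete. The paper does not actually supply a proof of this proposition: it simply states the result as elementary after having assumed $V\neq 0$ on $U$ and fixed a determination of $(-V)^{1/2}$, so your argument (global sign via connectedness, then a primitive via simple connectedness) fills in exactly the details the authors leave implicit.
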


Now we  fix a  such a solution $\varphi$ in $U$. It is then easy to prove by induction that, given an initial data, the system of transport equations  has a unique solution. Therefore we have the
\begin{proposition}\sl
Let $(a_{j}^{0})_{j=0}^\infty $ be any sequence of complex numbers.  Then  the Schr\"odinger equation (\ref{wkb.1}) has a unique formal WKB solution in $U$, such that
\begin{equation*}
\forall j\in \N,\; a_{j}(z_{0})=a_{j}^{0}.
\end{equation*}
Moreover, the function $a_{0}$ is given in $U$, for some suitable constant $C\in \C$, by
$$
a_{0}(z)=C(\varphi'(z))^{-1/2}.
$$
\end{proposition}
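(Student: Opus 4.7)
The plan is to solve the sequence of transport equations \eqref{wkb.7} explicitly, step by step, by reducing each to a first-order linear ODE whose coefficient $\varphi''/\varphi'$ is holomorphic on $U$ (since $\varphi'^{2}=-V$ and $V$ never vanishes, $\varphi'$ never vanishes either). Since $U$ is simply connected and $\varphi'$ is holomorphic and non-vanishing, we may fix a single-valued holomorphic branch of $(\varphi'(z))^{1/2}$ on $U$; call it $g(z)$. This is the key device that will make every step below well-defined.

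For the leading order, the equation $\varphi'\,a_0'+\tfrac{1}{2}\varphi''\,a_0=0$ rewrites as $(g^{2}a_0)'=g\,\varphi'\,a_0'+g'\,\varphi'\,a_0\cdot 2g/(2g)=\dots$; more directly, one checks by a straightforward differentiation that $\partial_z\bigl(g(z)a_0(z)\bigr)=0$. Hence $a_0(z)=C\,g(z)^{-1}=C(\varphi'(z))^{-1/2}$ for some $C\in\mathbb{C}$, and the normalization $a_0(z_0)=a_0^{0}$ fixes $C=a_0^{0}\,g(z_0)$ uniquely. This yields the claimed formula for $a_0$ and its holomorphy on $U$.

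For $j\ge 1$, the inhomogeneous transport equation becomes, after multiplying by $g/\varphi'$,
\begin{equation*}
\partial_z\bigl(g(z)a_j(z)\bigr)=\frac{i}{2}\,\frac{a_{j-1}''(z)}{g(z)}.
\end{equation*}
Since $a_{j-1}$ is already constructed as a holomorphic function on $U$ (by the induction hypothesis) and $1/g$ is holomorphic on $U$, the right-hand side is holomorphic on $U$. Because $U$ is simply connected, it admits a holomorphic primitive, which is unique up to an additive constant; imposing the initial condition $a_j(z_0)=a_j^{0}$ determines that constant uniquely and gives
\begin{equation*}
a_j(z)=\frac{1}{g(z)}\left(g(z_0)a_j^{0}+\int_{z_0}^{z}\frac{i}{2}\,\frac{a_{j-1}''(w)}{g(w)}\,dw\right),
\end{equation*}
the integral being taken along any path in $U$. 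This is holomorphic on $U$, satisfies the prescribed initial value, and is the unique solution of the $j$-th transport equation with that value, so induction on $j$ produces the formal WKB solution and proves its uniqueness.

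The only place that requires care is the well-definedness and single-valuedness of the branch $g=(\varphi')^{1/2}$ and of the primitive of the inhomogeneity; both rely on $V\ne 0$ on $U$ (to get $\varphi'\ne 0$) and on the simple connectedness of $U$, already granted in the setting of Section~\ref{wkb}. Once these are in place, everything else is a routine induction, so I do not foresee a substantial obstacle.
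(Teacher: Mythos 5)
Your proof is correct and is essentially the standard induction argument the paper alludes to (the paper states only that the result ``is easy to prove by induction'' and omits the details): fix a holomorphic branch $g=(\varphi')^{1/2}$ on the simply connected domain $U$, observe that each transport equation rewrites as $\partial_z(g\,a_j)=\tfrac{i}{2}a_{j-1}''/g$, and integrate from $z_0$. The only blemish is the garbled first display in your $j=0$ step, but you immediately replace it with the correct observation $\partial_z(g\,a_0)=0$, so no actual gap remains.
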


We want now to associate true solutions of the Schr\"odinger equation (\ref{wkb.1}) to the formal ones we have constructed above. It is convenient to introduce the  notion of Stokes line for the potential $V$.

\begin{definition}\sl
\label{wkb.11}
Let $U$ be a simply connected open set in $\C$ where $V$ is holomorphic. A Stokes line
 is a $\CC^{1}$ curve $\sigma :I\rightarrow U$ such that
\begin{equation*}
\im\int_{s}^{t}(-V(\sigma (\tau )))^{\frac{1}{2}}\sigma' (\tau )d\tau=0,
\end{equation*}
for all $s,t\in I$. Here, $I$ is any interval starting at $0$ and ending at $1$.
\end{definition}

Notice that 
$$
\im\int_{s}^{t}(-V(\sigma (\tau )))^{\frac{1}{2}}\sigma' (\tau )d\tau
=\im (\varphi(\sigma(t)))-\im (\varphi(\sigma(s))),
$$
where $\varphi$ is a solution of the eikonal equation. Thus, a Stokes line is nothing else than a level curve in $U$ of the imaginary part of the phase $\varphi$.

The following proposition is well known (see for example \cite{Sj14+} for a proof), and can be considered as the fundamental rule of the complex WKB method: always move in a direction where the modulus of the phase factor increases, thus in particular transversely to the Stokes lines.

\begin{proposition}\sl\label{wkb.12} 
Let $U$ be a simply connected bounded open subset of $\C$, such that $V(z)\neq 0$ for all $z\in U$. Let $\varphi$ be a solution of the eikonal equation in $U$. Let also $\gamma: ]0,1[ \rightarrow U$ be a $\CC^{1}$ curve in $U$ such that 
\begin{equation}
\forall t\in ]0,1[,\; \frac{d}{dt}(-\im\varphi(\gamma (t)))>0,
\end{equation}
Then there exists a neighborhood $\Omega\subset U$  of $\gamma$ such that,
for any formal WKB solution
$u_{wkb}(z,h)=e^{i\varphi(z)/h}\sum_{j\geq 0}a_{j}(z)h^j$, there exists a solution $u$ of the Schr\"{o}dinger equation (\ref{wkb.1}) in $\Omega$ such that
\begin{equation}
\label{wkb.13}
u(z,h)=e^{i\varphi(z)/h}a(z,h),
\end{equation}
where $a$ is holomorphic with respect to $z\in \Omega$, and
\begin{equation}
\label{wkb.14}
a(z,h)\sim \sum_{k\geq 0}a_{j}(z)h^j \mbox{ in} \Hol(\Omega).
\end{equation}

\end{proposition}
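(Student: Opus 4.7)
My plan is to Borel-resum the formal series to a holomorphic symbol, observe that the corresponding Ansatz is an exact solution up to an $O(h^\infty)$ residual, and correct it to an exact solution via a Green's function estimate that exploits the monotonicity of $-\im\varphi$ along $\gamma$.

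Step 1 (Borel-Ritt). By the classical Borel-Ritt construction applied in the $\Hol$-norm, there exists a function $\tilde a(z,h)$ holomorphic in $z$ on a tube $\Omega\subset U$ about $\gamma$ for $0<h\leq h_0$, with $\tilde a\sim\sum_j a_j h^j$ in $\Hol(\Omega)$. Set $\tilde u=e^{i\varphi/h}\tilde a$.

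Step 2 (Residual is $O(h^\infty)$). By the transport equations (\ref{wkb.7}), each coefficient of the formal expansion of $(\varphi'\partial_z+\tfrac{1}{2}\varphi''-\tfrac{ih}{2}\partial_z^2)\tilde a$ vanishes, so this quantity is $O(h^\infty)$ in $\Hol(\Omega)$. Multiplying by $-2ihe^{i\varphi/h}$ gives $(-h^2\partial_z^2+V)\tilde u=O(h^\infty)\cdot e^{i\varphi(z)/h}$ uniformly on compacts of $\Omega$.

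Step 3 (Correction). Seek the true solution as $u=\tilde u+w$, where $w$ solves the inhomogeneous equation with right-hand side $-(-h^2\partial_z^2+V)\tilde u$ and zero Cauchy data at $\gamma(0)$. Use variation of parameters against two approximate WKB solutions with phases $\pm\varphi$ (actual solutions up to $O(h^\infty)$, with Wronskian of order $1/h$). The Green's function splits into two pieces, yielding
$w(z)=\int_{\gamma(0)}^z\bigl(K_+(z,s)e^{i(\varphi(z)-\varphi(s))/h}+K_-(z,s)e^{-i(\varphi(z)-\varphi(s))/h}\bigr)\bigl(-P\tilde u\bigr)(s)\,ds$
with $K_\pm$ bounded. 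For the ``$+$'' part, $|e^{i(\varphi(z)-\varphi(s))/h}(-P\tilde u)(s)|\leq O(h^\infty)|e^{i\varphi(z)/h}|$ directly. For the ``$-$'' part, the exponential factor has modulus $e^{\im\varphi(z)/h-2\im\varphi(s)/h}$; the hypothesis $\frac{d}{dt}(-\im\varphi(\gamma(t)))>0$ gives $\im\varphi(s)\geq\im\varphi(z)$ for $s$ between $\gamma(0)$ and $z$, hence this modulus is at most $e^{-\im\varphi(z)/h}=|e^{i\varphi(z)/h}|$. Thus $w=O(h^\infty)e^{i\varphi(z)/h}$, so $u=e^{i\varphi/h}a$ with $a=\tilde a+e^{-i\varphi/h}w\sim\sum_j a_jh^j$ in $\Hol(\Omega)$.

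The main obstacle is step 3: controlling the ``$-$''-mode contribution to the Green's function. The kernel carrying $e^{-i\varphi/h}$ could in principle grow exponentially along $\gamma$, and the strict monotonicity of $-\im\varphi$ is exactly what forbids this; it would fail precisely at a Stokes crossing. The tube $\Omega$ must be chosen, using compactness of $\gamma([0,1])$ and the strict inequality in the hypothesis, narrow enough for the monotonicity to persist throughout, so that the estimate closes uniformly in $h$ and the corrected symbol $a$ retains the full asymptotic expansion in $\Hol(\Omega)$.
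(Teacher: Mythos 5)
The paper does not actually prove this proposition; it cites \cite{Sj14+} and moves on. Your proposal is the standard argument that reference uses (Borel--Ritt realization of the symbol, then a Duhamel-type correction controlled by the monotonicity of $-\im\varphi$ along $\gamma$), and your Step~3 identifies the crux correctly: the hypothesis $\frac{d}{dt}\bigl(-\im\varphi(\gamma(t))\bigr)>0$ is exactly what tames the recessive-mode piece of the kernel, and your computation $\bigl|e^{-i(\varphi(z)-\varphi(s))/h}e^{i\varphi(s)/h}\bigr|\le e^{-\im\varphi(z)/h}$ for $s$ between the base point and $z$ is right.

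There is, however, one real gap to close. Your $w$ is written via variation of parameters against the \emph{approximate} homogeneous solutions $\tilde u_\pm=e^{\pm i\varphi/h}(\varphi')^{-1/2}(1+O(h))$; since $P\tilde u_\pm\ne 0$, the resulting formula does \emph{not} solve the inhomogeneous Cauchy problem exactly, so $\tilde u+w$ is merely a sharper approximate solution. Concretely, if $\mathcal K$ denotes your integral operator, a direct computation gives $P\mathcal K=I+R$ with $Rg(z)=\bigl(P\tilde u_+\bigr)(z)\int_{z_0}^z\frac{\tilde u_-g}{-h^2W}\,ds-\bigl(P\tilde u_-\bigr)(z)\int_{z_0}^z\frac{\tilde u_+g}{-h^2W}\,ds$. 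You must either show, by the same weighted sup-norm estimate you use for $\mathcal K$, that $\|R\|=O(h^\infty)$ and then set $u=\tilde u+\mathcal K(I+R)^{-1}(-P\tilde u)$, or equivalently set up the Volterra integral equation for $a=e^{-i\varphi/h}u$ and run a Picard iteration; without this fixed-point step you have not produced an exact solution. Two smaller points worth tightening: the kernels $K_\pm$ are $O(1/h)$, not bounded, because the Wronskian is $\approx 2i/h$ (harmless here, since the source is $O(h^\infty)$); and since $\gamma$ is defined only on $]0,1[$, the base point $\gamma(0)$ need not exist --- one takes the base point at $\gamma(t_0)$ and lets $t_0\to 0^+$, getting uniform estimates on compacts of $\Omega$, which is all that the definition (\ref{int.5}) of $\sim$ in $\Hol(\Omega)$ requires.
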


\section{WKB analysis near a simple turning point}
\label{st}

In this section we follow closely the
presentation in \cite{Sj14+}. Let $\Omega \subset \mathbb{C}$
be open and simply connected, and $V\in \mathrm{Hol\,}(\Omega )$. We suppose that $V$ has a unique zero $z_{0}$ in $\Omega$, and that it is a simple one: 
\begin{equation}
V(z_{0})=0,\ V'(z_{0})\neq 0.
\label{st.1}
\end{equation}
We are
interested in solutions $u$ in $\Omega$ of the general Schr\"odinger equation (\ref{wkb.1}) of the form 
\begin{equation}
\label{st.2}
u(z,h)= a(z,h)e^{\varphi (z)/h}.
\end{equation}
Notice that, contrary to (\ref{wkb.2}), here we have chosen not to put the factor $i$ in the exponent to simplify the notations. For the same reason, we will also assume that $z_{0}=0$.

As in Section \ref{wkb}, we obtain first the eikonal equation
\begin{equation}
\label{st.3}
\varphi'(z)=V(z)^{\frac{1}{2}},
\end{equation}
in $\Omega$. By assumption there exists a function $F$, holomorphic  in $\Omega$, such that
$$
V(z)=zF(z)
$$
and $F(z)\neq 0$ for $z\in \Omega$ (we may decrease $\Omega$ whenever
necessary). It is therefore clear that $\varphi (z)$ is multi-valued in general, and to
better understand the structure of this singularity we pass to the
double covering $\widetilde \Omega$ of $\Omega \setminus \{0\}$, setting $z=w^{2}$. Then 
\begin{equation*}
\frac{\partial }{\partial z}=\frac{1}{2w}\frac{\partial }{\partial w},
\end{equation*}
and if we set
$$
\left\{ 
\begin{array}{l}
\widetilde{V}(w)=V(z)=w^{2}F(w^{2}),\\[8pt]
\widetilde{\varphi }(w)=\varphi (z),
\end{array}
\right .
$$
the eikonal equation becomes 
\begin{equation*}
\partial _{w}\widetilde{\varphi }(w)=2w^{2}F(w^{2})^{\frac{1}{2}}.
\end{equation*}%
Notice that the right hand side is an even holomorphic function. If we also require
that $\varphi (0)=\widetilde{\varphi }(0)=0$, we see that $\widetilde{\varphi }(w)$ 
is an odd holomorphic function of the form 
\begin{equation*}
\widetilde{\varphi }(w)=\frac{2}{3}\widetilde{F}(w^{2})w^{3},\mbox{ where }
\widetilde{F}(0)=F(0)^{\frac{1}{2}}=V^{\prime }(0)^{\frac{1}{2}}.
\end{equation*}
In the original coordinates, we get the double-valued solution, 
\begin{equation}
\varphi (z)=\frac{2}{3}\widetilde{F}(z)z^{\frac{3}{2}}. 
\label{st.4}
\end{equation}
Now we study the Stokes and anti-Stokes lines having $0$ as a limit point. Since, with respect to Section \ref{wkb}, we
have removed the factor $i$ in the exponent in (\ref{st.2}), Stokes lines are now level curves of the real part of $\varphi$, and level curves of $\im \varphi$ are called anti-Stokes lines. On such curves we have $\re\varphi =0$ or 
$\im\varphi =0$, which is equivalent to  $\im\varphi
^{2}=0$, and to 
$
\im z^{3}\widetilde{F}(z)^{2}=0$.
In other words, these curves are given by
\begin{equation}\label{st.5}
\{z\in \Omega,\; \exists t\in\R, \; z^{3}{\widetilde F}(z)^{2}=t^{3}\}.
\end{equation}
Taking the cubic root, we see that Stokes and anti-Stokes lines reaching 0 in the limit, are contained in three curves $\gamma _{k}$ 
given by 
\begin{equation}\label{st.6}
\gamma_{k}=\{z\in \Omega,\; \exists t\in\R,\; z\widetilde{F}(z)^{\frac{2}{3}}=e^{2\pi ik/3}t\},\; k\in \{0,1,2\}\simeq 
\Z/3\Z.
\end{equation}
In the case where $V'(0)>0$, the situation is as shown in Figure \ref{trfig1}. Each curve $\gamma _{k}\setminus \{0\}$ is divided into a Stokes line $\gamma _{k}^{-}$ (plain lines) and an anti-Stokes line $\gamma _{k}^{+}$ (dashed lines). The three
Stokes lines delimit three closed Stokes sectors $\Sigma _{k}$, $k\in \Z/3\Z$, where $\Sigma _{k}$ is the sector that contains $\gamma_{k}^+$. In 
Figure \ref{trfig1}, we have also drawn a Stokes line inside each sector.

\def\svgwidth{9cm}
\begin{figure}[!h]
\begin{center}
\input 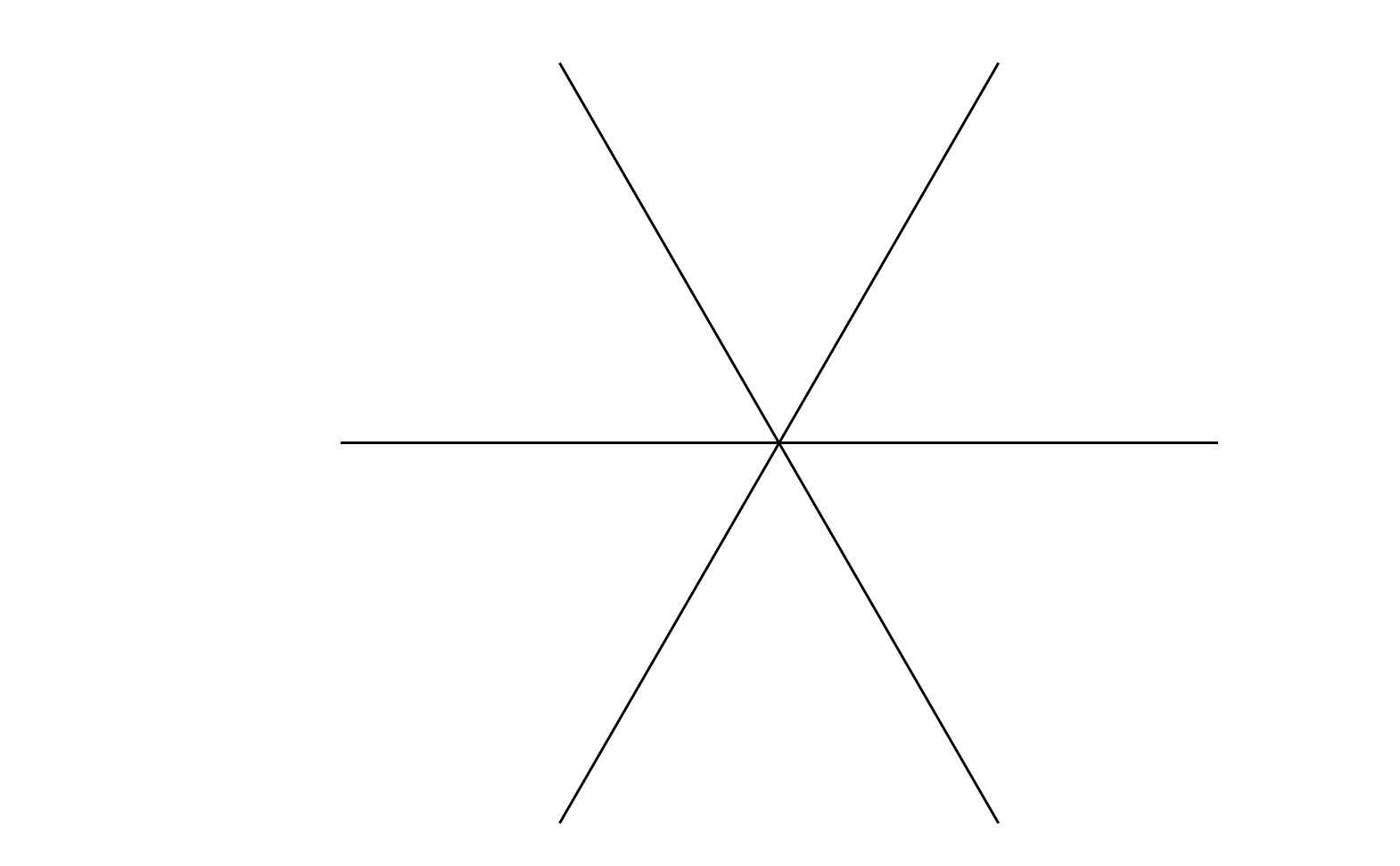_tex
\caption{Stokes lines close to a simple turning point 
\label{trfig1}
}
\end{center}
\end{figure}

For $k\in \Z/3\Z$, we denote by $\varphi _{k}$ the branch of $\varphi $ in $\Omega \setminus \gamma
_{k}^{-}$ such that  $\varphi _{k}(0)=0$, and $\re\varphi _{k}<0$ in $\mathring\Sigma
_{k}$. Notice that $\varphi _{k+1}$ and $\varphi _{k}$
are both well defined in $\Sigma _{k}\cup \Sigma _{k+1}$ and satisfy
\begin{equation}
\label{st.7}
\varphi _{k+1}=-\varphi _{k} \mbox{ in } \Sigma _{k}\cup \Sigma _{k+1}.
\end{equation}
According to  Proposition \ref{wkb.12}, there are
solutions $u=u_{k}$, $k\in \Z/3\Z$, of the Schr\"odinger equation (\ref{wkb.1}) in $\Omega $ such that, in 
$\mathring\Sigma_{k}$,
\begin{equation}
\left\{
\begin{array}{l}
\ds u_{k}(z,h)=a_{k}(z,h)e^{\varphi _{k}(z)/h}, \\[8pt] 
\ds a_{k}(z,h)\sim \sum_{j\geq 0}a_{k,j}(z)h^j \mbox{ in } \Hol(\mathring\Sigma_{k}).
\end{array}
\right .
\label{st.8}
\end{equation}
This asymptotic description extends to $\Omega_{k}$, the complement of an arbitrarily
small neighborhood of $\gamma _{k}^{-}\cup \{0\}$, that can be reached from $\Sigma _{k}$ by crossing the
Stokes lines transversally. We also recall that $a_{k,0}$ is unique up to a
constant factor and that we can choose
\begin{equation}
a_{k,0}(z)=(\varphi _{k}'(z))^{-\frac{1}{2}},
\label{st.9}
\end{equation}
for any branch of the square root.

Recall that if $u,v$ are solutions to the Schr\"{o}dinger
equation, then the Wronskian 
\begin{equation*}
W_{h}(u,v)=(h\partial_{z} u)v-u(h\partial_{z} v).
\end{equation*}
is constant, and vanishes precisely when $u$, $v$ are collinear. Let $j,k\in \Z/3\Z$. Applying the asymptotics of $u_{j}$ and $u_{k}$ at some point
in the interior of $\Sigma _{j}\cup \Sigma _{k}$, we see that, recalling (\ref{st.9}),  $W_{h}(u_{j},u_{k})$ has an asymptotic expansion in powers of $h$, whose first term is given by 
\begin{equation}\label{st.10}
  W_{h}(u_{j},u_{k})
  =
  2a_{j,0}a_{k,0}\varphi'_{j}+\mathcal{O}(h)
  =
  \ds\frac{2(\varphi'_{j})^{\frac{1}{2}}}{(\varphi_{k}')^{\frac{1}{2}}}+\mathcal{O}(h).
\end{equation}
We  fix a branch of
$(\varphi_{k}')^{\frac{1}{2}}$ in $\Sigma _{k}$ for each $k\in
\Z/3\Z$. For two different Stokes sectors, $\Sigma_{j}\neq
\Sigma_{k}$, we have in the interior of $\Sigma _{j}\cup \Sigma _{k}$,
that
\begin{equation}
(\varphi _{j}^{\prime })^{\frac{1}{2}}=i^{\nu _{j,k}}(\varphi _{k}^{\prime })^{\frac{1}{2}},
\label{st.11}
\end{equation}%
for some $\nu _{j,k}\in {\Z}/4{\Z}$ which are odd, and such that $\nu _{j,k}=-\nu
_{k,j}$.
Thus, starting from $\Sigma _{0}$, we can make a tour around $0$ in the positive direction and
we get that 
\begin{equation}
\begin{split}
(\varphi _{1}')^{\frac{1}{2}}& =i^{\nu _{1,0}}(\varphi _{0}^{\prime })^{\frac{1}{2}} \mbox{ in } \Sigma_{1},
\\
(\varphi _{2}')^{\frac{1}{2}}& =i^{\nu _{2,1}}(\varphi _{1}^{\prime })^{\frac{1}{2}} \mbox{ in } \Sigma_{2},
\\
(\varphi _{0}')^{\frac{1}{2}}& =i^{\nu _{0,2}}(\varphi _{2}^{\prime })^{\frac{1}{2}} \mbox{ in } \Sigma_{0}.
\end{split}
\label{st.12}
\end{equation}
This means that if we follow a continuous branch of $(\varphi _{0}^{\prime
})^{\frac{1}{2}}$ around $0$ in the positive direction, then after a turn, we obtain
the new branch
\begin{equation}
i^{-(\nu _{0,2}+\nu _{2,1}+\nu _{1,0})}(\varphi _{0}^{\prime })^{\frac{1}{2}}.
\label{st.13}
\end{equation}
But $(\varphi_{0}')^{\frac{1}{2}}=V^{1/4}$ for a suitable branch of the fourth
root, and if one follows this function around $0$ once in the positive direction, we obtain $iV^{1/4}$.
This gives the co-cycle condition
\begin{equation}
\nu _{0,2}+\nu _{2,1}+\nu _{1,0}\equiv -1\ \mathrm{mod\,}4.
\label{st.14}
\end{equation}(\ref{st.10}) and (\ref{st.11}) imply that
\begin{eqnarray}
\nonumber
W_{h}(u_{j},u_{k})
=2i^{\nu
_{j,k}}+\mathcal{O}(h).
\label{tr314}
\end{eqnarray}

Now we describe the linear space of solutions of the Schr\"odinger equation (\ref{wkb.1}) in $\Omega$. It is
of course of dimension $2$, and any two of $u_{-1}$, $%
u_{0}$, $u_{1}$ are linearly independent, so we have a relation
\begin{equation}
\alpha _{-1}u_{-1}+\alpha _{0}u_{0}+\alpha _{1}u_{1}=0,
\label{st.15}
\end{equation}
where the vector $(\alpha _{-1},\alpha _{0},\alpha _{1})^{T}\in 
\C^{3}\setminus\{0\}$ is well defined up to a scalar factor.
Applying $W(u_{j},\cdot )$ to this relation, we get
\begin{equation}
(W(u_{j},u_{k}))_{j,k}
\begin{pmatrix}
\alpha _{-1} \\ 
\alpha _{0} \\ 
\alpha _{1}
\end{pmatrix}
=0,  
\label{st.16}
\end{equation}
which is a system of the form
\begin{equation}
\begin{pmatrix}
0 & a & b \\ 
-a & 0 & c \\ 
-b & -c & 0
\end{pmatrix}
\begin{pmatrix}
\alpha _{-1} \\ 
\alpha _{0} \\ 
\alpha _{1}
\end{pmatrix}
=0.
\label{st.17}
\end{equation}
The triplet $(\alpha _{-1},\alpha _{0},\alpha _{1})=(c,-b,a)$ is a solution, so
up to a common factor, we have
\begin{equation}
\alpha _{j}=\pm i+\mathcal{O}(h).
\end{equation}
More precisely, the values of $a,b$ and $c$ are given by the equation (\ref{tr314}), and we get, after inserting a factor $1/2$,
\begin{equation}
\begin{pmatrix}
\alpha _{-1} \\ 
\alpha _{0} \\ 
\alpha _{1}
\end{pmatrix}
=
\begin{pmatrix}
i^{\nu _{0,1}} \\ 
-i^{\nu _{-1,1}} \\ 
i^{\nu _{-1,0}}
\end{pmatrix}
+\CO(h)=
\begin{pmatrix}
i^{\nu _{0,1}} \\ 
i^{\nu _{1,-1}} \\ 
i^{\nu _{-1,0}}
\end{pmatrix}
+\CO(h).
\label{st.18}
\end{equation}

\begin{remark}\sl
\label{st.19} 
Sometimes it is more natural to change the notation, writing $i\varphi _{j}$ in (\ref{st.8}) instead of $\varphi _{j}$ so that $u_{j}(z;h)=a_{j}(z;h)e^{i\varphi _{j}(z)/h}$ with $\im\varphi
_{j}\geq 0$ in $\Sigma _{j}$. Then (\ref{st.9}) becomes $a_{j,0}(z)=(i
\varphi _{j}^{\prime })^{-1/2}=V(z)^{-1/4}$ and in (\ref{st.11}), (\ref
{st.12}), $\varphi_{j}'$ must be replaced  by $i\varphi_{j}'$.
\end{remark}

\section{WKB solutions near the wells}
\label{sw}

From now on, we consider the equation 
$P_{h,\varepsilon}u=Eu$, that is
\begin{equation}
\label{sw.1}
-h^2u'' +(V_{\varepsilon}(x)-E)u =0,
\end{equation}
where $V_{\varepsilon}=V_{0}+i\varepsilon W$ satisfies  (A1) to (A4)
and (A6). For the moment we do not assume the $\CP\CT$-symmetry property (A5).

Let us now define some formal WKB solutions to the Schr\"odinger equation (\ref{sw.1}) near the wells. For $(E,\varepsilon)\in D(E_{0},\varepsilon_{0})\times D(0,\varepsilon_{0})$, the equation $V_{\varepsilon}(x)=E$ has exactly four solutions in $U$, the domain of holomorphy of $V_{\varepsilon}$, that are called turning points at energy $E$. We have denoted them
$\alpha_{\ell}(E,\varepsilon)$, $\beta_{\ell}(E,\varepsilon)$,
$\beta_{r}(E,\varepsilon)$ and $\alpha_{r}(E,\varepsilon)$, with,
\begin{equation}\label{sw.2}
\alpha_{\bullet}(E_{0},0)=\alpha_{\bullet}, \quad \beta_{\bullet}(E_{0},0)=\beta _{\bullet},\ \bullet=\ell,r.
\end{equation}
We have drawn in Figure
\ref{sw.fig.1} a typical configuration of the Stokes lines starting at
each of the turning points, when $E\neq E_{0}$ and $\varepsilon\neq
0$. 

\begin{figure}[!h]
\begin{center}
\def\svgwidth{12cm}
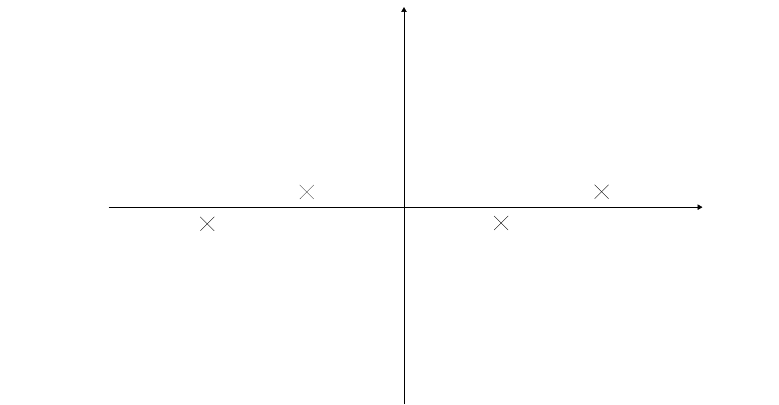
\end{center}
\caption{Stokes lines and Stokes sectors \label{sw.fig.1}}
\end{figure}

We shall work in the cut complex plane along $[\alpha_{\ell},\beta_{\ell}]\cup [\beta_{r},\alpha_{r}]$,  or more precisely in the cut version $\tilde U$ of $U$, so that we have two determinations of  $x\mapsto (V_{\varepsilon}(x)-E)^{\frac{1}{2}}$ in $\tilde U$. We denote
$$
x\mapsto (V_{\varepsilon}(x)-E)_{\ell}^{\frac{1}{2}},
(\mbox {resp.}\  x\mapsto (V_{\varepsilon}(x)-E)_{m}^{\frac{1}{2}}, \  x\mapsto (V_{\varepsilon}(x)-E)_{r}^{\frac{1}{2}}),
$$
the determination  which is real and positive for $\varepsilon=0$, $E=E_{0}$ and $x\in ]-\infty, \alpha_{\ell}[$ (resp. $x\in ]\beta_{\ell}, \beta_{r}[$, $x\in ]\alpha_{r},+\infty[$). Notice that
$$
\forall x\in\tilde U,
\ (V_{\varepsilon}(x)-E)_{\ell}^{\frac{1}{2}} =
 (V_{\varepsilon}(x)-E)_{r}^{\frac{1}{2}}=
 -(V_{\varepsilon}(x)-E)_{m}^{\frac{1}{2}}.
$$

First, we concentrate on the situation near the left well. We denote
$\Sigma^\ell_{0}$, $\Sigma^\ell_{1}$ and $\Sigma^\ell_{-1}$ the three
Stokes sectors near $\alpha_{\ell}$, and $S^\ell_{0}$, $S^\ell_{1}$
and $S^\ell_{-1}$ those near $\beta_{\ell}$. For each sector
$\Sigma^\ell_k$ (resp. $S^\ell_{k}$), $k\in \Z/3\Z$, we choose a
solution $u^\ell_{k}$ (resp. $v^\ell_{k}$) of (\ref{sw.1}) such that
\begin{equation}\label{sw.3}\begin{split}
u^\ell_{k}(z,E,\varepsilon,h)&=a^\ell_{k}(z,E,\varepsilon,h)e^{i\varphi^\ell_{k}(z,E,\varepsilon)/h}
\hbox{ in }\Sigma ^\ell_k\\
v^\ell_{k}(z,E,\varepsilon,h)&=b^\ell_{k}(z,E,\varepsilon,h)e^{i\psi^\ell_{k}(z,E,\varepsilon)/h}
\hbox{ in }S ^\ell_k .
\end{split}
\end{equation}
Here $\varphi^\ell_{k}$ (resp. $\psi^\ell_{k}$)  is a solution of the eikonal equation
\begin{equation}
\label{sw.4}
(i\varphi'(x))^2=V_{\varepsilon}(x)-E,
\end{equation}
vanishing at $z=\alpha _\ell (E,\varepsilon )$ (resp. at $\beta _\ell
(E,\varepsilon )$) for $(E,\varepsilon)\in
D(E_{0},\varepsilon)\times D(0,\varepsilon_{0})$, such that
\begin{equation}\label{sw.5}\begin{split}
&\forall z\in \Sigma^\ell_{k},\; \re (i\varphi^\ell_{k}(z,E,\varepsilon))<0,\\
&\forall z\in S^\ell_{k},\; \re (i\psi^\ell_{k}(z,E,\varepsilon))<0.
\end{split}
\end{equation}
The amplitudes $a^\ell_k$ and $b^\ell_k$ in (\ref{sw.3}) have
asymptotic expansions in $\mathrm{Hol\,}(\mathring{\Sigma } _k^\ell)$
and $\mathrm{Hol\,}(\mathring{S} _k^\ell)$ respectively, in the sense
of (\ref{int.5}) The phase functions $\varphi^\ell_{k}$ and
$\varphi^\ell_{k+1}$ are well defined in $\Sigma^\ell_{k} \cup
\Sigma^\ell_{k+1}$, and $\psi^\ell_{k}$, $\psi^\ell_{k+1}$ are well
defined in $S^\ell_{k} \cup S^\ell_{k+1}$, where they satisfy
\begin{equation}\label{sw.6}
\varphi^\ell_{k}=-\varphi^\ell_{k+1}, \quad \psi^\ell_{k}=-\psi^\ell_{k+1}.
\end{equation}
We choose the functions $u^\ell_{k}$ (resp. $v^\ell_{k}$) so that they
are holomorphic also with respect to $(E,\varepsilon )$ in
$D(E_{0},\varepsilon_{0})\times D(0,\varepsilon_{0})$. The asymptotic
expansions of the amplitudes $a^\ell_k$ and $b^\ell_k$ extend to 
$\Omega ^\ell_{k} \times D(E_{0},\varepsilon_{0})\times
D(0,\varepsilon_{0})$ and 
$\CO^\ell_{k} \times D(E_{0},\varepsilon_{0})\times
D(0,\varepsilon_{0})$ respectively, where $\Omega^\ell_{k}$ (resp. $\CO^\ell_{k}$)
is the complement of an arbitrarily small neighborhood of
$\gamma_{\alpha_{\ell},k}^{-}$ (resp. $\gamma_{\beta_{\ell},k}^{-}$) in
$U$ (see Figure \ref{trfig1}).

We fix now a choice for the principal symbols $a^\ell_{k,0} $ and $b_{k,0}^\ell$ of these six solutions $u^\ell_{k}$ and $v^\ell_{k}$, $k\in \Z/3\Z$.

By (\ref{sw.4}) and (\ref{sw.5}) we  have
\begin{equation}\label{sw.7}
\left\{
\begin{array}{l}
\ds i\varphi^\ell_{0}(x,E,\varepsilon) = \int_{\alpha_{\ell}}^x (V_{\varepsilon}(t)-E)_{\ell}^{\frac{1}{2}} dt,\\[8pt]
\ds i\varphi^\ell_{-1}(x,E,\varepsilon) = -\int_{\alpha_{\ell}}^x (V_{\varepsilon}(t)-E)_{\ell}^{\frac{1}{2}} dt,\\[8pt]
\ds i\varphi^\ell_{1}(x,E,\varepsilon) = -\int_{\alpha_{\ell}}^x (V_{\varepsilon}(t)-E)_{\ell}^{\frac{1}{2}} dt
.
\end{array}
\right .
\end{equation}
 Then we choose the principal symbol $a^\ell_{0,0}$ of $a^\ell_{0}$ to be
\begin{equation}\label{sw.8}
a^\ell_{0,0}(x,E,\varepsilon)=[(i\varphi^\ell_{0})']^{-\frac{1}{2}}=(V_{\varepsilon}(x)-E)_{\ell}^{-\frac{1}{4}}.
\end{equation}
In order to fix the principal symbol of $u^\ell_{1}$ and $u^\ell_{-1}$, we have to choose $\nu_{0,1},\nu_{1,-1}$ and $\nu_{-1,0}$ in $\Z/4\Z$, odd,  such that (\ref{st.14}) holds. We take
\begin{equation}
\nu_{0,1}=1,\ \nu_{1,-1}=-1, \mbox{ and } \nu_{-1,0}=1.
\label{sw.9}
\end{equation}
Then by (\ref{st.12}) we have,
\begin{equation}\label{sw.10}
a^\ell_{-1,0}(x,E,\varepsilon)=\frac{1}{i^{\nu_{-1,0}}}a^\ell_{0,0}(x,E,\varepsilon)= -i (V_{\varepsilon}(x)-E)_{\ell}^{-\frac{1}{4}},
\end{equation}
and,
\begin{equation}\label{sw.11}
a^\ell_{1,0}(x,E,\varepsilon)=\frac{1}{i^{\nu_{1,0}}}a^\ell_{0,0}(x,E,\varepsilon)=i (V_{\varepsilon}(x)-E)_{\ell}^{-\frac{1}{4}}.
\end{equation}
With these choices, we have 
\begin{equation}\label{sw.11.5}
u_{0}^\ell=\tau_{+}(h)u_{-1}^\ell + \tau_{-}(h)u_{1}^\ell,
\end{equation}
for some symbols $\tau_{\pm}(h)$ with
$\tau_{\pm}(h)=1+\CO(h)$. Without changing the leading asymptotics, we
can replace $u^\ell_{\mp 1}$ by $\tau _\pm^\ell u_{\mp 1}^\ell$ and we get
\begin{equation}\label{sw.12}
u_{0}^\ell=u_{-1}^\ell + u_{1}^\ell.
\end{equation}

Let us now consider the solutions $v^\ell_{k}$, $k\in \Z/3\Z$, near
$\beta_{\ell}$. By (\ref{sw.4}) and (\ref{sw.5}) we have
\begin{equation}\label{sw.13}
\left\{
\begin{array}{l}
\ds i\psi^\ell_{0}(x,E,\varepsilon) =-\int_{\beta_{\ell}}^x (V_{\varepsilon}(t)-E)_{m}^{\frac{1}{2}}dt,\\[8pt]
\ds i\psi^\ell_{-1}(x,E,\varepsilon)=\int_{\beta_{\ell}}^x (V_{\varepsilon}(t)-E)_{m}^{\frac{1}{2}} dt,\\[8pt]
\ds i\psi^\ell_{1}(x,E,\varepsilon)=\int_{\beta_{\ell}}^x (V_{\varepsilon}(t)-E)_{m}^{\frac{1}{2}} dt
.
\end{array}
\right .
\end{equation}
We here consider $-z$ as the basic variable for the Schr\"odinger
equation and correspondingly, we choose the principal symbol $b^\ell_{0,0}$ of $b^\ell_{0}$ to  be
\begin{equation}\label{sw.14}
b^\ell_{0,0}(x,E,\varepsilon)=[(-i\psi^\ell_{0})']^{-\frac{1}{2}}=(V_{\varepsilon}(x)-E)_{m}^{-\frac{1}{4}},
\end{equation}
and we fix the principal symbol of $v^\ell_{1}$ and $v^\ell_{-1}$,
choosing $\nu_{0,1},\nu_{1,-1}$ and $\nu_{-1,0}$ as in (\ref{sw.9}):
\begin{equation}
\nu_{0,1}=1,\ \nu_{1,-1}=-1, \mbox{ and } \nu_{-1,0}=1.
\label{sw.15}
\end{equation}
We get, 
\begin{equation}\label{sw.16}
b^\ell_{-1,0}(x,E,\varepsilon)=\frac{1}{i^{\nu_{-1,0}}}b^\ell_{0,0}(x,E,\varepsilon)= -i (V_{\varepsilon}(x)-E)_{m}^{-\frac{1}{4}},
\end{equation}
and
\begin{equation}\label{sw.17}
b^\ell_{1,0}(x,E,\varepsilon)=\frac{1}{i^{\nu_{1,0}}}b^\ell_{0,0}(x,E,\varepsilon)=i (V_{\varepsilon}(x)-E)_{m}^{-\frac{1}{4}}.
\end{equation}

We further fix a choice of $v^\ell_{\pm 1}$. The principle of the WKB
method ensures that we can choose $v^{\ell}_{1}$ to be proportional to
$u^{\ell}_{-1}$, and $v^{\ell}_{-1}$ to be proportional to
$u^{\ell}_{1}$.  Notice first that
$$
i\psi^\ell_{1}(x,E,\varepsilon)-i\varphi^\ell_{-1}(x,E,\varepsilon)= 
\int_{\alpha_\ell}^{\beta_{\ell}}
(V_{\varepsilon}(t)-E)_{\ell}^{\frac{1}{2}}dt =-iI_{\ell}(E,h),
$$
where we have set
\begin{equation}\label{sw.18}
I_{\ell}(E,\varepsilon)=\int_{\alpha_\ell}^{\beta_{\ell}}(E-V_{\varepsilon}(t))_{w}^{\frac{1}{2}}\, dt
\end{equation}
where $(E-V_{\varepsilon}(t))_{w}^{\frac{1}{2}}$ is real and positive for $\varepsilon=0$, $E\in \R$  close to $E_{0}$ and $t\in ]\alpha_{\ell},\beta_{\ell}[$. The same way, we see that
$$
i\psi^\ell_{-1}(x,E,\varepsilon)-i\varphi^\ell_{1}(x,E,\varepsilon)= iI_{\ell}(E,h).
$$
Concerning the principal symbols, we have first
\begin{equation}\label{sw.19}
\begin{split}
&(V_{\varepsilon}(x)-E)_{\ell+}^{\frac{1}{4}}= -i(V_{\varepsilon}(x)-E)_{m}^{\frac{1}{4}},
\\
&(V_{\varepsilon}(x)-E)_{\ell-}^{\frac{1}{4}}= i(V_{\varepsilon}(x)-E)_{m}^{\frac{1}{4}}.
\end{split}
\end{equation}
Here we have denoted $(V_{\varepsilon}(x)-E)_{l+}^{-\frac{1}{4}}$  (resp. $(V_{\varepsilon}(x)-E)_{l-}^{-\frac{1}{4}}$) the determination of $(V_{\varepsilon}(x)-E)^{-\frac{1}{4}}$ obtained on $]\beta_\ell,\beta_r[$ by extending $(V_{\varepsilon}(x)-E)_{\ell}^{-\frac{1}{4}}$ on $\tilde U$ along a path in the upper (resp. lower) half plane.
Thus we see that
\begin{equation}\label{sw.20}
b^\ell_{1,0}=ia^\ell_{-1,0} \ \mbox{ and }\  b^\ell_{-1,0}=-ia^\ell_{1,0},
\end{equation} 
and we can assume that
\begin{equation*}
\left\{
\begin{array}{l}
\ds u_{-1}^\ell=-i e^{iI_{\ell}(E,\varepsilon)/h}\sigma^\ell_{+}v_{1}^\ell,
\\[8pt]
\ds u_{1}^\ell= i e^{-iI_{\ell}(E,\varepsilon)/h}\sigma^\ell_{-}v_{-1}^\ell,
\end{array}
\right .
\end{equation*}
for some symbols $\sigma^\ell_{\pm}$ such that
$\sigma^\ell_{\pm}=1+\CO(h)$. After replacing $v_{\pm 1}^\ell $ by
$\sigma _{\pm }^\ell v_{\pm 1}^\ell$ (which does not change the
leading asymptotics) we may assume that $\sigma ^\ell_{\pm}=1$: 
\begin{equation}\label{sw.21}
\left\{
\begin{array}{l}
\ds u_{-1}^\ell=-i e^{iI_{\ell}(E,\varepsilon)/h}v_{1}^\ell,
\\[8pt]
\ds u_{1}^\ell= i e^{-iI_{\ell}(E,\varepsilon)/h}v_{-1}^\ell,
\end{array}
\right .
\end{equation}

The same discussion applies to the solutions associated to the well to the right. 
The main rule is simply that the right well becomes the left
well of the operator $\widetilde{P}_\varepsilon= -h^2\partial
_{\widetilde{x}}^2+V_\varepsilon (-\widetilde{x})$ under the change of
variables $x=-\widetilde{x}$, and we let $u_k^r$, $v_k^r$ be obtained
from the corresponding null solutions $\widetilde{u}_k^\ell$,
$\widetilde{v}_k^\ell$ of $\widetilde{P}_\varepsilon -E$. Note that
the Stokes sectors $\Sigma ^r_k$, $S^r_k$ correspond to the sectors
$\widetilde{\Sigma }^\ell_k$, $\widetilde{S}^\ell _k$ to the left,
defined exactly as $\Sigma _k^\ell$, $S_k^\ell$. (Cf.~Figure
\ref{sw.fig.1}.)

This means that we have the 6 solutions $u_k^r$, $v_k^r$, $k\in \Z$
which satisfy (\ref{sw.3}) with ``$\ell$'' replaced by ``$r$'' and
$\varphi _k^r$, $\psi _k^r$ are solutions to the eikonal equation
(\ref{sw.4}), vanishing at $z=\alpha _r(E,\varepsilon )$, $z=\beta
_r(E,\varepsilon )$ respectively, satisfying (\ref{sw.5}) with
``$\ell$'' replaced by ``$r$''. The principal parts $a^r_{k,0}$,
$b^k_{k,0}$ are given by (\ref{sw.8}), (\ref{sw.10}), (\ref{sw.11}),
(\ref{sw.14}), (\ref{sw.16}), (\ref{sw.17}) with ``$\ell$" replaced by
``$r$''. Here $(V_\varepsilon (x)-E)^{\frac{1}{4}}_r$ is the branch
which is $\ge 0$ to the right of $\alpha_{r}$, when $\varepsilon =0$
and $E$ is real and close to $E_0$. Again, we can modify the choice of
$u_\pm^r$ by constant factors $1+{\cal O}(h)$, so that the analogue of
(\ref{sw.12}) holds:
\begin{equation}\label{sw.22}
u_0^r=u_{-1}^r+u_1^r.
\end{equation}
Then we can modify $v_{\pm 1}^r$ by constant factors $1+{\cal O}(h)$
so that (\ref{sw.21}) holds with ``$\ell$'' replaced by ``$r$'':
\begin{equation}\label{sw.23}
\begin{cases}
u_{-1}^r=-ie^{iI_{r}(E,\varepsilon )/h}v_1^r,\\
u_{1}^r=ie^{-iI_{r}(E,\varepsilon )/h}v_{-1}^r,
\end{cases}
\end{equation}
where now (cf.~(\ref{sw.18})
\begin{equation}\label{sw.24}
I_{r}(E,\varepsilon )=\int _{\beta_{r}}^{\alpha_{r}} (E-V_\varepsilon (t))_w^{\frac{1}{2}}dt
\end{equation}
and $(E-V_\varepsilon )_w^{\frac{1}{2}}$ is now defined near the right
well $]\beta _r,\alpha _r[$ as the branch of the square root which is
positive on $]\beta _r,\alpha _r[$ when $\varepsilon =0$ and $E$ is
real and close to $E_0$. 

Notice that this fits with the principle of
transforming everything from the right to the left by putting
$\widetilde{V}_\varepsilon (\widetilde{x})=V_\varepsilon
(-\widetilde{x})$. $\widetilde{V}_\varepsilon $ has the left turning
points $\widetilde{\alpha }_\ell=-\alpha _r$, $\widetilde{\beta }_\ell
=-\beta _r$ and
$$
\int_{\widetilde{\alpha }_\ell}^{\widetilde{\beta
  }_\ell}(E-V_\varepsilon (\widetilde{x}))^{\frac{1}{2}}
d\widetilde{x}=
-\int_{\alpha _r}^{\beta _r}(E-V_{\varepsilon }(x))^{\frac{1}{2}} dx=I_r,
$$ 
with the natural branches of the square root.

Now we build two convenient independent formal WKB solutions
$w_{0}^\ell$ and $w_{0}^r$ near the barrier.  We set
\begin{equation}\label{sw.25}
w_{0}^\ell=\frac{1}{2i}(v^\ell_{1}-v^\ell_{-1}).
\end{equation}
Since
$$
v_{1}^\ell 
= i(V_{\varepsilon}-E)_{m}^{-\frac{1}{4}}e^{i\psi^\ell_{1}/h}(1+\CO(h))
= i(V_{\varepsilon}-E)_{m}^{-\frac{1}{4}}e^{-i\psi^\ell_{0}/h}(1+\CO(h)),
$$
and
$$
v_{-1}^\ell 
= -i(V_{\varepsilon}-E)_{m}^{-\frac{1}{4}}e^{i\psi^\ell_{1}/h}(1+\CO(h))
= -i(V_{\varepsilon}-E)_{m}^{-\frac{1}{4}}e^{-i\psi^\ell_{0}/h}(1+\CO(h)),
$$
we have
$$
w_{0}^\ell = (V_{\varepsilon}-E)_{m}^{-\frac{1}{4}}e^{-i\psi^\ell_{0}/h}(1+\CO(h)).
$$
On the other hand, we have
$$
v_{0}^r= (V_{\varepsilon}-E)_{m}^{-\frac{1}{4}}e^{i\psi^r_{0}/h}(1+\CO(h)),
$$
so that
\begin{equation*}
w_{0}^\ell =\delta_{\ell,r}e^{J(E,\varepsilon)/h} v_{0}^r,
\end{equation*} 
where  $\delta_{\ell,r}=1+\CO(h)$ is a symbol, and
\begin{equation}
\label{sw.26}
J(E,\varepsilon)=
-i\psi_{0}^r -i\psi_{0}^\ell= \int_{\beta_{\ell}}^{\beta_{r}}(V_{\varepsilon}(t)-E)^{\frac{1}{2}}dt.
\end{equation}
We now replace $v_0^r$ by $\delta _{\ell ,r}v_0^r$ (which does not
modify the leading asymptotics) so that
\begin{equation}\label{sw.27}
w_{0}^\ell =e^{J(E,\varepsilon)/h} v_{0}^r,
\end{equation} 
Notice that (\ref{sw.27}) fixes a choice for the formal WKB solution $v_{0}^r$.

In the same way, we set
\begin{equation}\label{sw.28}
w_{0}^r=\frac{1}{2i}(v^r_{1}-v^r_{-1}).
\end{equation}
and we have
$$
w_{0}^r=\delta_{r,\ell}e^{J(E,\varepsilon)/h} v_{0}^\ell,
$$
then replace $v_0^\ell $ by $\delta _{r,\ell}v_0^\ell$ and get
\begin{equation}\label{sw.29}
w_0^r=e^{J(E,\varepsilon )/h}v_0^\ell,
\end{equation}
which we take as the definition of $v_0^\ell$.

\par In analogy with the equation prior to (\ref{sw.11}) we have
\begin{equation}\label{sw.30}
v_0^\ell=\gamma _+^\ell (h)v_1^\ell +\gamma _-^\ell (h)v_{-1}^\ell ,
\end{equation}
\begin{equation}\label{sw.31}
v_0^r=\gamma _+^r (h)v_{-1}^r +\gamma _-^r (h)v_1^r ,
\end{equation}
where $\gamma _\pm^{\bullet }=1+{\cal O}(h)$. Having already adjusted
$v_\pm^\bullet $ by factors $1+{\cal O}(h)$, there is no place for
further adjustments, so we have to refrain from the possibility of replacing
$\gamma _\pm^\bullet $ by $1$.

\section{WKB expansions of $L^2$ solutions outside the well}
\label{si}

In this section we focus on solutions of the Schr\"odinger equation (\ref{sw.1}) that are $L^2$ in a neighborhood of $+\infty$ or $-\infty$ respectively. The existence of such solutions follows from the general theory of partial differential equations since $P_{h,\varepsilon}-E$ is elliptic at infinity for $E\in D(E_{0},\varepsilon_{0})$. We are interested in their asymptotic behavior  as $h\to 0$.

Let $\delta _{0}>0$. We consider the eikonal (\ref{wkb.5}) and transport equations (\ref{wkb.6}) on the half-line $]\alpha_{r}+\delta _{0},+\infty[$.
There exists $\varepsilon _{0}=\varepsilon _{0}(\delta _{0})$
small enough, such that for all $(E,\varepsilon) \in D(E_{0},\varepsilon _{0})\times D(0,\varepsilon_{0})$,
and for  $x\in ]\alpha_{r}+\delta _{0},+\infty[$, the function
\begin{equation}
\label{si.1}
\varphi_{+} (x,E)=i\int_{\alpha_{r}(E,\varepsilon)}^{x}(V_{\varepsilon }(t)-E)_{r}^{\frac{1}{2}}dt,
\end{equation}
is a smooth function of $x$, and an analytic function of $(E,\varepsilon)$, which solves the eikonal equation. 
We recall that  $t\mapsto (V_{\varepsilon }(t)-E)_{r}^{\frac{1}{2}}$ is real and positive in $]\alpha_{r}+\delta _{0},+\infty[$ when $\varepsilon=0$, so that 
\begin{equation}
\label{si.2}
\re (i\varphi_{+}(x,E))<0.
\end{equation}
It is then straightforward to obtain the existence of the solutions of the corresponding transport equations, and we get the

\begin{proposition}\sl
\label{si.3}
For all $\delta _{0}>0$, there exists $\varepsilon _{0}>0$ such that, for all $(E,\varepsilon )\in
D(E_{0},\varepsilon _{0})\times D(0,\varepsilon _{0})$, the equation (\ref{sw.1}) has a formal WKB solution $\tilde u_{+}$ in
$]\alpha_{r}+\delta _{0},+\infty [$,
\begin{equation}\label{si.4}
\tilde u_{+}(x,\varepsilon ,E,h)=e^{i\varphi_{+} (x,E)/h}\sum_{j\geq 0}a_{j}(x,E,\varepsilon)h^j,
\end{equation}
where $a_{j}$ is $C^{\infty}$ with respect to $x\in ]\alpha_{2}+\delta _{0},+\infty[$, and is holomorphic with respect to $(E,\varepsilon)\in D(E_{0},\varepsilon _{0})\times D(0,\varepsilon _{0})$. Moreover we can choose $a_{0}$ so that,
\begin{equation}
a_{0}(x,E,\varepsilon)=(-i\partial_{x}\varphi_{+})^{-\frac{1}{2}},
\label{si.5}
\end{equation}
and  we have the estimates, for all $j\geq 0$ and all $k\geq 0$,
\begin{equation}
\label{si.6}
\vert a^{(k)}_{j}(x,E,\varepsilon)\vert=\CO(\< x\>^{-\tfrac{m_{0}}{4}-j(\tfrac{
m_{0}}{2}+1{si.5})-k}).
\end{equation}
\end{proposition}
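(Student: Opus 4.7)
The plan is to verify that $\varphi _+$ is a solution of the eikonal equation, to construct the amplitudes $a_j$ recursively from the transport equations (\ref{wkb.7}) with a boundary condition at $+\infty$ that makes them decay, and then to prove the growth estimate (\ref{si.6}) by induction on $j$.

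First I would differentiate (\ref{si.1}) under the integral sign to obtain $\varphi _+'(x)=i(V_\varepsilon (x)-E)_r^{1/2}$, hence $(\varphi _+')^2=-(V_\varepsilon -E)$, which is the eikonal equation associated to (\ref{sw.1}). Since $(V_\varepsilon (x)-E)_r^{1/2}$ is real and positive for $\varepsilon =0$, $E=E_0$, $x>\alpha _r+\delta _0$, one gets for $\varepsilon _0=\varepsilon _0(\delta _0)$ small enough that $\re(-i\varphi _+'(x))\ge c>0$ on $]\alpha _r+\delta _0,+\infty[$, uniformly in $(E,\varepsilon )\in D(E_0,\varepsilon _0)\times D(0,\varepsilon _0)$, which yields (\ref{si.2}).

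Next, the transport equations (\ref{wkb.7}) are first-order linear ODEs in $x$. The homogeneous equation for $a_0$ integrates to $a_0=C(\varphi _+')^{-1/2}$, and (\ref{si.5}) amounts to fixing the constant and the branch. For $j\ge 1$ I would set $a_j=a_0f_j$; using the equation satisfied by $a_0$, the transport equation reduces to $\varphi _+' a_0 f_j'=\tfrac{i}{2} a_{j-1}''$, which I solve by integration from $+\infty$:
\begin{equation*}
f_j(x)=-\int_x^{+\infty }\frac{i\, a_{j-1}''(s)}{2\, \varphi _+'(s)\, a_0(s)}\, ds .
\end{equation*}
Taking $+\infty $ as base point is what singles out the $L^2$ branch and uniquely determines each $a_j$. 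Holomorphy of $a_j$ in $(E,\varepsilon )$ is preserved because the integrand depends analytically on the parameters and the integrals converge uniformly on compact subsets of $D(E_0,\varepsilon _0)\times D(0,\varepsilon _0)$.

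The main work, and the only real obstacle, lies in the inductive proof of the estimates (\ref{si.6}). For $j=0$, the bounds $|V_\varepsilon -E|\gtrsim \langle x\rangle ^{m_0}$ and $|V_\varepsilon ^{(k)}|\lesssim \langle x\rangle ^{m_0-k}$ from (A2), (A4) combined with Faà di Bruno applied to $a_0=(V_\varepsilon -E)_r^{-1/4}$ give $|a_0^{(k)}|\lesssim \langle x\rangle ^{-m_0/4-k}$. Assuming (\ref{si.6}) for $a_{j-1}$, the integrand defining $f_j$ decays like $\langle x\rangle ^{-m_0/2-(j-1)(m_0/2+1)-2}$ (using $\varphi _+'a_0\sim \langle x\rangle ^{m_0/4}$), which is always integrable at $+\infty $ even in the borderline case $m_0=0$ of Remark \ref{int.2} thanks to the extra $-2$ coming from the second derivative. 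Integration gives $|f_j(x)|\lesssim \langle x\rangle ^{-m_0/2-(j-1)(m_0/2+1)-1}$, and multiplying by $a_0$ yields the claimed decay of $a_j$. Estimates on the derivatives $a_j^{(k)}$ follow by differentiating the ODE $\varphi _+'a_0f_j'=\tfrac{i}{2}a_{j-1}''$, applying Leibniz's rule and invoking the inductive estimates on $a_{j-1}$ and on $\varphi _+'$, $a_0$; this bookkeeping, while elementary, is the most tedious part of the argument.
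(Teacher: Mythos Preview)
Your proposal is correct and follows essentially the same approach as the paper: solve the eikonal equation, set $a_j=a_0f_j$ so that the transport equations reduce to $\varphi_+'a_0f_j'=\tfrac{i}{2}a_{j-1}''$, integrate from $+\infty$, and prove (\ref{si.6}) by induction with the same exponent bookkeeping. The only cosmetic difference is that you estimate $a_0^{(k)}$ via Fa\`a di Bruno applied to $(V_\varepsilon -E)^{-1/4}$, whereas the paper obtains the same bounds by differentiating the first transport equation and inducting on $k$; both are equally valid.
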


\begin{proof}
We prove (\ref{si.6}).  Differentiating the eikonal equation
$$
\varphi'_{+}(x)^2= E-V_{\varepsilon}(x),
$$
and using (A2), we easily obtain by induction that, for $k\geq 1$,
\begin{equation}\label{si.7}
(\varphi'_{+})^{(k)}(x)=\CO(\<x\>^{\frac{m_{0}}{2}-k}).
\end{equation}
Of course we also have, by (A4),
$$
\<x\>^{\frac{m_{0}}{2}}\lesssim \vert\varphi'_{+}(x)\vert,
$$
so that (\ref{si.5}) gives
\begin{equation}\label{si.8}
\< x\>^{-\frac{m_{0}}{4}}\lesssim \vert a_{0}(x,E,\varepsilon)\vert  \lesssim \< x\>^{-\frac{m_{0}}{4}}.
\end{equation}
Then, differentiating the first transport equation
\begin{equation}\label{si.9}
\varphi_{+}'(x)a_{0}'(x,E,\varepsilon)+\frac{\varphi''_{+}(x)}{2}a_{0}(x,E,\varepsilon)=0,
\end{equation}
we obtain by induction that, uniformly for $(E,\varepsilon)\in D(E_{0},\varepsilon_{0})\times D(0,\varepsilon_{0})$,
\begin{equation}\label{si.10}
\vert a^{(k)}_{0}(x,E,\varepsilon)\vert\lesssim \< x\>^{-m_{0}/4-k},
\end{equation}
which proves the estimates for $j=0$ and all $k\in \N$.
Now suppose that (\ref{si.6}) holds for some $j$ and all $k\in \N$.
The $(j+1)$-st transport equation is,
\begin{equation*}
(\varphi_{+}'(x) \partial _{x}+\frac{i}{2}{\varphi_{+}''(x)}
)a_{j+1}=\frac{i}{2}a''_{j},
\end{equation*}
and, setting
\begin{equation}\label{si.11}
a_{j+1}=f_{j+1}a_{0},
\end{equation}
we  get, using  also (\ref{si.9}), 
\begin{equation}\label{si.12}
\big (a_{0}(x)\varphi_{+}' (x)\big )f_{j+1}'(x)=\frac{i}{2}a''_{j}(x).
\end{equation}
Thus we have first
\begin{equation}\label{si.13}
f_{j+1}(x) =
\frac{i}{2}\int_{+\infty }^{x}\frac{a''_{j}(t)}{\varphi_{+}' (t)a_{0}(t)}dt=
\CO(\< x\>^{-(j+1)(\frac{m_{0}}{2}+1)}),
\end{equation}
and, differentiating (\ref{si.12}), we obtain by induction
\begin{equation}\label{si.14}
f_{j+1}^{(k)}(x) =
\CO(\< x\>^{-(j+1)(\frac{m_{0}}{2}+1)-k}).
\end{equation}
Then (\ref{si.6}) follows by differentiating (\ref{si.11}) and using Leibniz formula. 
\end{proof}

To the formal series $\sum_{j\geq 0} a_{j}h^j$ defined in Proposition \ref{si.3}, we can associate a function $a$ by means of a Borel construction, setting
\begin{equation}\label{si.15}
a(x,E,\varepsilon,h)=\sum_{j\geq 0} a_{j}(x,E,\varepsilon)h^j\chi(\lambda_{j}h)
\end{equation}
for some  plateau function $\chi\in \CC^\infty_{0}(\R)$ over $\{0\}$, and a suitable sequence $(\lambda_{j})$ of   real numbers such that $\lambda_{j}\to +\infty$ as $j\to +\infty$ (see e.g. \cite[Chapter 2]{DiSj99}).
Then, 
for all $N\in N$  and any $k\geq 0$,
\begin{equation}\label{si.16}
\vert a^{(k)}(x,h)- \sum_{j=0}^{N-1} a_{j}^{(k)}(x)h^j\vert  =\CO (h^{N}\<x\>^{-\frac{m_{0}}{4}-N(\frac{m_{0}}{2}+1)-k }).
\end{equation}
Moreover $u_{+,wkb}=e^{i\varphi_{+}/h}a$ is an approximate solution to the Schr\"odinger equation (\ref{sw.1}), in the sense that
\begin{equation}\label{si.17}
P_{h,\varepsilon}u_{+,wkb}(x,E,\varepsilon,h)=e^{i\varphi_{+} (x,E,\varepsilon)/h}r(x,E,\varepsilon,h),
\end{equation}
where, for all $N\in \N$,  
\begin{equation}\label{si.18}
r(x,E,\varepsilon,h)=\CO(h^{N}\<x\>^{-N}).
\end{equation}

Now we  build a solution $u_{+}$ that has the formal WKB solution constructed above as an asymptotic expansion in $]\alpha_{r}+\delta_{0},+\infty[$.   To do so, we establish first some estimates for the solutions of the inhomogeneous Schr\"odinger equation 
\begin{equation}\label{si.19}
-h^2u''+(V_{\varepsilon}-E)u =v,
\end{equation}
on intervals of the form $\ds I_{\lambda}=[\frac{\lambda}{2}, \frac{3\lambda}{2}]$, for large $\lambda$.  For simpler notations we write 
\begin{equation}
Q(x)= Q_{E,\varepsilon}(x)=V_{\varepsilon}-E,
\end{equation}
and we set
\begin{equation}
x=\lambda + \lambda \tilde x,
\end{equation}
so that $\tilde x \in [-\frac12,\frac 12]$. Multiplying also by $\lambda^{-m_{0}}$, the equation (\ref{si.19}) on $I_{\lambda}$ is equivalent to the equation
\begin{equation}\label{si.20}
(\tilde h^2D_{\tilde x}^2 +\tilde Q(\tilde x))\tilde u=\tilde v,
\end{equation}
on $[-\frac12,\frac 12]$, where
\begin{equation}
\tilde h=\frac{h}{\lambda^{1+m_{0}/2}}, \ \tilde Q(\tilde x)=\lambda^{-m_{0}} Q(x), \ \tilde v(\tilde x)=\lambda^{-m_{0}}v(x).
\end{equation}
Notice in particular that $\tilde Q\asymp 1$, and that $\tilde Q^{(k)} =\CO(1)$ for all $k\geq 1$. As in \cite[Chapter 7]{Sj14+}, we write (\ref{si.20}) as the first order system,
\begin{equation}
(\tilde hD_{\tilde x}+A(\tilde x))U=V,\   
A(\tilde x)=\begin{pmatrix}0&-1\\ \tilde Q(\tilde x)&0\end{pmatrix},\ 
U=\begin{pmatrix}\tilde u\\\tilde hD_{\tilde x}\tilde u\end{pmatrix},\  
V=\begin{pmatrix}0\\ \tilde v\end{pmatrix},
\end{equation}
and we denote $\tilde \CE=(\tilde\CE_{(i,j)})\in \CC^\infty(I\times I,\CM_{2}(\C))$ the fundamental solution of this system. One can prove that (see \cite[Theorem 7.1.3]{Sj14+}), for any $j,k\in N$,
\begin{equation}
\Vert (h\partial_{\tilde x})^j(h\partial_{\tilde x})^k\tilde \CE(\tilde x,\tilde y)\Vert \leq C_{j,k}\exp(\frac{1}{\tilde h}\vert \im \tilde \varphi(\tilde x)-\im \tilde \varphi(\tilde y)\vert),
\end{equation}
for some $C_{j,k}>0$, where $\tilde \varphi$ is the solution of the eikonal equation associated to (\ref{si.20}) such that $e^{i\tilde\varphi(\tilde x)/h}$ is decaying as $\tilde x$ increases. 
Now the solution $\tilde u$ of (\ref{si.20}) satisfies, for any $\tilde x,\tilde y\in [-\frac12,\frac 12]$,
\begin{equation}
\begin{pmatrix}
\tilde u(\tilde x)\\
\tilde hD_{\tilde x}u(\tilde x)
\end{pmatrix}
=
\tilde \CE(\tilde x,\tilde y)
\begin{pmatrix}
\tilde u(\tilde y)\\
\tilde hD_{\tilde x}u(\tilde y)
\end{pmatrix}.
\end{equation}
Since $i\tilde\varphi(\tilde x)/h=i\varphi(x)/h$, where $\varphi$ is the solution of the eikonal equation associated to (\ref{si.19}) which decays when $x$ increases, we also have, with $u(x)=\tilde u (\tilde x)$,
\begin{equation}
\begin{pmatrix}
u(x)\\
\frac{h}{\lambda^{m_{0}}}D_{x}u(x)
\end{pmatrix}
=
\tilde \CE(\tilde x,\tilde y)
\begin{pmatrix}
u(y)\\
\frac{h}{\lambda^{m_{0}}}D_{y}u(y)
\end{pmatrix},
\end{equation}
with
\begin{equation}\label{si.21}
\Vert \tilde \CE(\tilde x,\tilde y)\Vert 
\leq
C\exp({\frac{1}{h}\vert\im\varphi(x)-\im\varphi(y)\vert}.
\end{equation}

Replacing $\lambda$ by $\vert x\vert$ we get
\begin{equation}
\begin{pmatrix}
u(x)\\
hDu(x)
\end{pmatrix}
=
\CE(x, y)
\begin{pmatrix}
u(y)\\
 hDu(y)
\end{pmatrix},
\end{equation}
where
\begin{equation}
\CE(x,y)=
\begin{pmatrix}
1&0\\
0&\vert x \vert^{m_{0}/2}
\end{pmatrix}
\hat \CE(x,y)
\begin{pmatrix}
1&0\\
0&\vert y\vert^{-m_{0}/2}
\end{pmatrix},
\end{equation}
and $\hat\CE$ satisfies the estimate (\ref{si.21}) possibly with another constant $C>0$.

For general $\alpha_{r}+\delta_{0}\leq x\leq y$, we can cover $[x,y]$ with $1+\CO(\ln(\frac{y}{x}))$ intervals of the type $I_{\lambda}$. Thus, writing
$$
\CE(x,y)=\CE(x,x_{1})\CE(x_{1},x_{2})\dots \CE(x_{n},y),
$$
where each of the intervals $[x_{j},x_{j+1}]$ is contained in one of the chosen $I_{\lambda}$'s, we obtain
\begin{equation}
\Vert \CE(x,y)\Vert \leq C^{1+\CO(\ln(\frac{y}{x}))}\exp(\frac{1}{h}\vert \im \varphi(x)-\im\varphi(y)\vert y^{-m_{0}/2}.
\end{equation}
In particular we have, for all $\alpha_{r}+\delta_{0}\leq x\leq y$,
\begin{equation}\label{si.22}
\vert \CE_{12}(x,y)\vert =\frac{\CO(1)}{h} \frac{y^{C-m_{0}/2}}{x^C} \exp(\frac{1}{h} \im (\varphi(y)-\varphi(x))),
\end{equation}
where we have used the fact that for $x\leq y$,  $\im \varphi(y)\geq \im \varphi(x)$.

Now we consider again the function $u_{+,wkb}$, and we denote $u_{\lambda}$ the unique solution of the Schr\"odinger equation (\ref{sw.1}) such that
\begin{equation}\label{si.23}
\left\{
\begin{array}
{l}
u_{\lambda}(\lambda,E,\varepsilon,h)=u_{+,wkb}(\lambda,E,\varepsilon,h),\\[6pt]
hDu_{\lambda}(\lambda,E,\varepsilon,h)=hDu_{+,wkb}(\lambda,E,\varepsilon,h).
\end{array}
\right .
\end{equation}
We have
\begin{equation}
P_{h,\varepsilon}(1_{]\alpha_{r}+\delta_{0},\lambda]}(u_{\lambda}-u_{+,wkb}))=
1_{]\alpha_{r}+\delta_{0},\lambda]}r,
\end{equation}
so that, for $x\in ]\alpha_{r}+\delta_{0},\lambda]$,
\begin{equation}
u_{\lambda}(x)-u_{+,wkb}(x)=-\frac{i}{h}\int_{x}^\lambda \CE_{(1,2)}(x,y) r(y) dy.
\end{equation}
Then, noticing that we can take $C>0$ arbitrarily large in (\ref{si.22}) , we get, for all $N\in \N$ and all $k\in \N$,
\begin{equation}\label{si.24}
u^{(k)}_{\lambda}(x)-u^{(k)}_{+,wkb}(x)=\CO(h^N\<x\>^{-N} e^{-\im \varphi (x)/h}),
\end{equation}
and, for $\lambda_{1}<\lambda_{2}\in \R^+$ large enough,
\begin{equation}
u^{(k)}_{\lambda_{2}}(x)-u^{(k)}_{\lambda_{1}}(x)=\CO(h^N\<x\>^{-N} \lambda_{1}^{-N}e^{-\im \varphi (x)/h}),
\end{equation}
Thus, the family $(u_{\lambda})$ converges to some function $u_{+}$, which is an exact solution to (\ref{sw.1}), and (\ref{si.24}) gives that, for all $N\in \N$ and all $k\in \N$,
\begin{equation}
u^{(k)}_{+}(x)-u^{(k)}_{+,wkb}(x)=\CO(h^N\<x\>^{-N} e^{-\im \varphi(x)/h}).
\end{equation}

We have proved the main part of the 

\begin{proposition}\sl
\label{si.25}
Let $\delta _{0}>0$, and $\varepsilon_{0}>0$ be small enough. Let $\tilde u=e^{i\varphi_{+}/h}\sum_{j\geq 0}a_{j}h^{j}$ be a formal WKB solution in $]\alpha_{r}+\delta_{0},+\infty[$ satisfying (\ref{si.1}) and (\ref{si.5}). Then,
for any $(E,\varepsilon )\in D(E_{0},\varepsilon _{0})\times D(0,\varepsilon_{0})$, the equation (\ref{sw.1}) has a unique solution  $u_{+}$ on  $]\alpha_{r}+\delta _{0},+\infty [$  such that,
\begin{equation}\label{si.26}
u_{+}(x,E,\varepsilon,h)=a_{+}(x,E,\varepsilon,h)e^{i\varphi_{+} (x,E,\varepsilon)/h}
\end{equation}
with 
\begin{equation}
a_{+}(x,E,\varepsilon,h)\sim
\sum_{j=0}^{\infty }a^+_{j}(x,E,\varepsilon)h^{j},
\label{si.27}
\end{equation}
in the sense that
\begin{align}
\nonumber
\forall N\in \N^*, &\forall k \in \N, \exists C_{N,k}>0
\mbox{ such that}
\\
&\vert \partial_{x}^k\Big (a_{+}(x,h)-\sum_{j=0}^{N-1}a_{j}^+(x)h^{j}\Big)\vert
\leq
C_{N,k}h^{N}\<x\>^{-\frac{m_{0}}{4}-N(\frac{m_{0}}{2}+1)-k }.
\label{si.28}
\end{align}
Moreover $u_{+}$ belongs to $L^2(]\alpha_{r}+\delta_{0},+\infty[$, and it is analytic with respect to $(E,\varepsilon)\in D(E_{0},\varepsilon _{0})\times D(0,\varepsilon_{0})$. 
\end{proposition}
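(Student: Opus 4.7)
The existence of $u_+$ satisfying (\ref{si.26})--(\ref{si.28}) is essentially already established by the preceding construction. The key ingredients are the Borel-summed approximate solution $u_{+,wkb}$ with the remainder estimate (\ref{si.18}), the Cauchy problem $u_\lambda$ with data (\ref{si.23}) matching $u_{+,wkb}$ at $x=\lambda$, and the kernel bound (\ref{si.22}). Letting $\lambda \to +\infty$ and applying (\ref{si.24}) at arbitrary order $N$ produces a limit $u_+$ that solves (\ref{sw.1}) and whose symbol $a_+ = u_+ e^{-i\varphi_+/h}$ has the full asymptotic expansion (\ref{si.27})--(\ref{si.28}). What remains to verify is the $L^2$ property, the analytic dependence on $(E,\varepsilon)$, and the uniqueness.

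For $L^2$ integrability I would combine (\ref{si.28}) with $N=1$ and the estimate (\ref{si.8}) to obtain
$|u_+(x,E,\varepsilon,h)| \lesssim \langle x\rangle^{-m_0/4}\, e^{-\im\varphi_+(x,E,\varepsilon)/h}$.
Because $|\varphi_+'(x)| \gtrsim \langle x\rangle^{m_0/2}$ and $\re(i\varphi_+) = -\im\varphi_+$ is strictly decreasing in $x$ by (\ref{si.2}), the exponential factor provides at least exponential decay (stretched-exponential when $m_0>0$), so $u_+ \in L^2(]\alpha_r+\delta_0,+\infty[)$. For the analytic dependence on parameters, the formal symbols $a_j$ are holomorphic on $D(E_0,\varepsilon_0)\times D(0,\varepsilon_0)$ by Proposition \ref{si.3}, hence so are the Borel sum (\ref{si.15}) and the Cauchy data (\ref{si.23}). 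Since the coefficients of (\ref{sw.1}) depend holomorphically on $(E,\varepsilon)$, standard ODE theory gives that each $u_\lambda$ is holomorphic in $(E,\varepsilon)$, and the kernel estimate (\ref{si.22}) together with the remainder bound (\ref{si.18}) are uniform on the polydisc. The local uniform convergence $u_\lambda \to u_+$ then produces holomorphy of $u_+$ by Weierstrass.

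For uniqueness, the solution space of (\ref{sw.1}) on the half-line is two-dimensional, spanned by $u_+$ and a second solution $u_-$ whose amplitude grows like $e^{+\im\varphi_+/h}$ (one can construct $u_-$ by the analogous Borel--plus--limit procedure starting from the phase $-\varphi_+$, and its linear independence from $u_+$ follows from the non-vanishing Wronskian computed via the leading symbols). If $\tilde u_+ = \alpha u_+ + \beta u_-$ also satisfies (\ref{si.26})--(\ref{si.27}), then the presence of the growing mode would destroy the decay encoded in the asymptotic expansion, forcing $\beta = 0$; comparing the prescribed leading symbol (\ref{si.5}) then pins down $\alpha$. The main obstacle I anticipate is the last step: two exact solutions with identical formal expansions can a priori differ by an $O(h^\infty)$ multiple of $u_+$, so the natural reading of the statement is uniqueness modulo $O(h^\infty)$, or equivalently, uniqueness of the $L^2$ solution once its leading symbol is fixed. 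I would therefore phrase the uniqueness clause in exactly that form, which is also the sense in which $u_+$ is used in the subsequent sections.
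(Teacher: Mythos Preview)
Your proposal is correct and follows essentially the same line as the paper. The paper's proof block only treats the $L^2$ claim (existence and the asymptotic expansion having been established in the discussion preceding the statement), using the same ingredients you invoke: a bound $a_+=\CO(\langle x\rangle^{-m_0/4})$ from the expansion together with $\re(i\varphi_+(x))\le -x/C+C$ coming from $\re(V_\varepsilon-E)\ge 1/C^2$ for large $x$. Your added arguments for analyticity (Weierstrass via uniform convergence of the $u_\lambda$) and your remark that uniqueness should be read as uniqueness of the decaying solution with the prescribed leading symbol (equivalently, modulo $\CO(h^\infty)$) are accurate supplements; the paper does not spell these out.
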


\begin{proof}  It only remains to prove that  the solution $u_{+}$ belongs to $L^2(]\alpha_{r}+\delta_{0},+\infty[)$. Assumption (A2) implies that there exists $C>0$ such that $\re(V_{\varepsilon}(x)-E)>\frac{1}{C^2}$ for all  $x$ large enough. Thus we have, for $x\in ]\alpha_{2},+\infty[$ large enough,
\begin{equation}\label{si.29}
\re(i\varphi_{+}(x))=-\re\int_{\alpha_{r}}^x (V_{\varepsilon}(t)-E)^{\frac{1}{2}}dt\leq -\frac{x}{C} +C.
\end{equation}
On the other hand, the estimate (\ref{si.28}) for $N=0$, $k=0$,  gives
$a_{+}(x,E,\varepsilon)=\CO(\<x\>^{-m_{0}/4})$, so that $u_{+}\in L^2(]\alpha_{r}+\delta_{0},+\infty[)$.
\end{proof}

It is clear that we have the same result for the existence of a
solution $u_{-}\in L^2(\R^-)$ that has, for any $\delta_{0}>0$, a WKB
asymptotic expansion in $]-\infty,\alpha _{\ell}-\delta _{0}[$ of the form
\begin{equation}\label{si.30}
u_{-}(x,E,\varepsilon,h)=a_{-}(x,E,\varepsilon,h)e^{i\varphi_{-} (x,E,\varepsilon)/h},
\end{equation}
where the phase $\varphi_{-}$ is defined by
\begin{equation}\label{si.31}
i\varphi_{-} (x,E,\varepsilon)=\int_{\alpha_{\ell}(E,\varepsilon)}^x (V_{\varepsilon}(t)-E)_{\ell}^{\frac{1}{2}} dt
\end{equation}
where we recall that the determination of $t\mapsto (V_{\varepsilon}(t)-E)_{\ell}^{\frac{1}{2}}$ is fixed in such a way that, for $x\in ]-\infty,\alpha_{\ell}-\delta_{0}[$,
\begin{equation}\label{si.32}
\re(i\varphi_{-} (x,E,\varepsilon))<0.
\end{equation}
We also have, in the same sense as in (\ref{si.28}),
\begin{equation}\label{si.33}
a_{-}(x,E,\varepsilon,h)\sim
\sum_{j=0}^{\infty }a_{j}^-(x,E,\varepsilon)h^{j},
\end{equation}
where $u_{wkb}=e^{i\varphi_{-}/h}\sum_{j=0}^{\infty }a_{j}^-h^{j}$ is a formal WKB solution.
It is of course also analytic with respect to $(E,\varepsilon)\in D(E_{0},\varepsilon _{0})\times D(0,\varepsilon_{0})$.

\section{The quantization condition}\label{qc}

\par To start with, we derive the quantization condition, using only the double
well structure but not yet the ${\cal PT}$-symmetry nor the symmetry
following from the fact that $P_0$ is real and self-adjoint. 

In Section \ref{si} we have introduced the two null solutions $u_+$,
$u_-$ of $(P_\varepsilon -E)u=0$ that decay exponentially near $+\infty $
and $-\infty $ respectively, and we know that $E$ is an eigenvalue of
$P_\varepsilon $ precisely when $W(u_+,u_-)=0$ or equivalently when $u_+$
and $u_-$ are colinear. It is clear that we can choose $u_\pm$ and
$u_0^\ell$, $u_0^r$ of the preceding section, so that
$$
u_0^\ell=u_-,\ \ u_0^r=u_+.
$$

\par By (\ref{sw.12}), (\ref{sw.21}) we have
\begin{equation}\label{qc.1}
u_0^\ell=-ie^{iI_{\ell} /h}v_1^\ell +ie^{-iI_{\ell} /h}v_{-1}^\ell .
\end{equation}
Similarly, by (\ref{sw.22}), (\ref{sw.23}),
\begin{equation}\label{qc.2}
u_0^r=-ie^{iI_{r}/h}v_1^r+ie^{-iI_{r}/h}v_{-1}^r.
\end{equation}

\par Here, we recall (\ref{sw.25}), (\ref{sw.27}), implying
\begin{equation}\label{qc.3}
v_0^r=e^{-J/h}\frac{1}{2i}\left(v_1^\ell -v_{-1}^\ell \right)
\end{equation}
and (\ref{sw.28}), (\ref{sw.29}), that give
\begin{equation}\label{qc.4}
v_0^\ell =e^{-J/h}\frac{1}{2i}\left(v_1^r -v_{-1}^r \right) .
\end{equation}
(\ref{sw.30}) and (\ref{qc.3}) form a system that allows to express
$v_{\pm 1}^\ell$ in terms of $v_0^r$, $v_0^\ell$. Similarly,
(\ref{sw.31}) and (\ref{qc.4}) allow us to express $v_\pm^r$ in terms
of $v_0^r$, $v_0^\ell$. After some straightforward calculations, we
get,
\begin{equation}\label{qc.5}
\begin{pmatrix}v_1^\ell\\ v_{-1}^\ell\end{pmatrix}=\frac{1}{\gamma
  _-^\ell +\gamma _+^\ell}\begin{pmatrix}1 & 2ie^{J/h}\gamma _-^\ell
  \\ 1 & -2ie^{J/h}\gamma
  _+^\ell\end{pmatrix}\begin{pmatrix}v_0^\ell\\ v_0^r\end{pmatrix},
\end{equation}
\begin{equation}\label{qc.6}
\begin{pmatrix}v_1^r\\ v_{-1}^r\end{pmatrix}=\frac{1}{\gamma
  _-^r +\gamma _+^r}\begin{pmatrix} 2ie^{J/h}\gamma _+^r &1
  \\ -2ie^{J/h}\gamma _-^r & 1
  \end{pmatrix}\begin{pmatrix}v_0^\ell\\ v_0^r\end{pmatrix} .
\end{equation}          

\par Combining (\ref{qc.1}) and (\ref{qc.5}), we get after a straightforward calculation,
\begin{equation}\label{qc.7}
u_0^\ell=\frac{1}{\gamma _-^\ell+\gamma _+^\ell} 
\begin{pmatrix}
\frac{1}{i}\left( e^{iI_{\ell}/h}-e^{-iI_{\ell} /h} \right) &
2e^{J/h}\left(e^{iI_{\ell}/h}\gamma _-^\ell +e^{-iI_{\ell} /h}\gamma _+^\ell \right)
\end{pmatrix} 
\begin{pmatrix}
v_0^\ell \\ v_0^r\end{pmatrix}.
\end{equation}
Combining (\ref{qc.2}), (\ref{qc.6}), we get
\begin{equation}\label{qc.8}
u_0^r=\frac{1}{\gamma _-^r+\gamma _+^r} \begin{pmatrix}
2e^{J/h}\left(e^{iI_{r}/h}\gamma _+^r +e^{-iI_{r} /h}\gamma _-^r \right) 
&\frac{1}{i}\left( e^{iI_{r}/h}-e^{-iI_{r} /h} \right) 
\end{pmatrix} \begin{pmatrix}v_0^\ell \\ v_0^r\end{pmatrix}.
\end{equation}

\par Since $v_0^\ell$, $v_0^r$ are linearly independent, we see that
$E$ is an eigenvalue of $P_\varepsilon $ precisely when the two row
matrices in (\ref{qc.7}) and (\ref{qc.8}) are colinear or equivalently
when the determinant of the matrix, formed by these two rows, is equal
to $0$. We then get the quantization condition
\[
\begin{split}
0=& \frac{1}{i}\left(e^{iI_{\ell}/h}-e^{-iI_{\ell}/h}\right)
\frac{1}{i}\left(e^{iI_{r}/h}-e^{-iI_{r}/h}\right)\\
&-4e^{2J/h}\left(e^{iI_{\ell} /h}\gamma _-^\ell +e^{-iI_{\ell} /h}\gamma
  _+^\ell \right)
\left(e^{iI_{r} /h}\gamma _+^r +e^{-iI_{r} /h}\gamma _-^r \right),
\end{split}
\]
which we rewrite as 
\begin{equation}\label{qc.9}
f(E,\varepsilon )=0,
\end{equation}
where
\begin{equation}\label{qc.10}
\begin{split}
f(E,\varepsilon )=&\frac{1}{4}\left(e^{iI_{\ell} /h}\gamma _-^\ell +e^{-iI_{\ell} /h}\gamma
  _+^\ell \right)
\left(e^{iI_{r} /h}\gamma _+^r +e^{-iI_{r} /h}\gamma _-^r \right)\\
&-\frac{1}{4}e^{-2J/h}\sin \left(I_{\ell} /h \right) \sin\left(I_{r}/h
\right) .
\end{split}
\end{equation}

\par We shall now take into account the various symmetry
properties. For functions $u(x,E,\varepsilon )$, where $x,E$ vary in
some domains in $\mathbb{C}$ and $\varepsilon $ in some real domain, we
put,
\begin{equation}\label{qc.11}
\mathrm{Co\,}(u)(x,E,\varepsilon )=u^*(x,E,\varepsilon
)=\overline{u(\overline{x},\overline{E},-\varepsilon )},
\end{equation}
\begin{equation}\label{qc.12}
\mathrm{Pt\,}(u)(x,E,\varepsilon )=u^\dagger (x,E,\varepsilon
)=\overline{u(-\overline{x},\overline{E},\varepsilon )},
\end{equation}
so that $\mathrm{Pt}$ is equal to ${\cal PT}$ in the
introduction. Notice that $\mathrm{Pt}$ and $\mathrm{Co}$ are
idempotent anti-linear operators that commute: $\mathrm{Co\,}\circ
\mathrm{Pt}=\mathrm{Pt\,}\circ \mathrm{Co\,}$. 

\par Using only that $V_\varepsilon =V_0+i\varepsilon W$ with $V_0$,
$W$ real-valued on the real domain, we see that
$$
V_\varepsilon ^*(x)=\overline{V_{-\varepsilon
  }(\overline{x})}=V_\varepsilon (x) ,\ \ \mathrm{Pt\,}(V_\varepsilon
)(x)=\overline{V_\varepsilon (-\overline{x})}=V_0(-x)-i\varepsilon W(-x),
$$
and if we make the ${\cal PT}$-symmetry assumption (A5), we get
$\mathrm{Pt}(V_\varepsilon )=V_\varepsilon $ and hence that
\begin{equation}\label{qc.13}
(P_\varepsilon -E)\circ \mathrm{Pt\,}=\mathrm{Pt\,}\circ (P_\varepsilon -E).
\end{equation}
Without (A5), we still have
\begin{equation}\label{qc.14}
(P_\varepsilon -E)\circ \mathrm{Co\,}=\mathrm{Co\,}\circ (P_\varepsilon -E).
\end{equation}
To verify this quickly, we observe that
$\mathrm{Co\,}(uv)=\mathrm{Co\,}(u)\mathrm{Co\,}(v)$ for products of
functions and similarly for $\mathrm{Pt\,}$, and that
$\mathrm{Pt\,}\circ \partial _x=-\partial _{\overline{x}}\circ \mathrm{Pt\,}$,
$\mathrm{\,Co}\circ \partial _x=\partial _{\overline{x}}\circ
\mathrm{Co\,}$. Recall that to leading order, 
$$
u_0^\ell \equiv u_{0,0}^\ell :=(V_\varepsilon
-E)_{\ell}^{-\frac{1}{4}}e^{\frac{1}{h}\int_{\alpha _\ell}^x
  (V_\varepsilon -E)_\ell^{\frac{1}{2}}dt}.
$$
By straightforward calculations, oberving that $\alpha _\ell
(\overline{E},-\varepsilon )=\overline{\alpha _\ell (E,\varepsilon
  )}$ (i.e. $\alpha _\ell^*=\alpha _\ell$), we obtain,
\begin{equation}\label{qc.15}
\mathrm{Co\,}(u^\ell_{0,0})=u^\ell_{0,0}.
\end{equation}
In view of (\ref{qc.14}), we know that $\mathrm{Co\,}(u_0^\ell )$
is a null solution of $P_\varepsilon -E$ and using also (\ref{qc.15}),
we conclude that $\frac{1}{2}\left(u_0^\ell +\mathrm{Co\,}(u_0^\ell)
\right)$ is a null solution with leading asymptotics $u_{0,0}^\ell$
which is invariant under $\mathrm{Co\,}$, so if we replace $u_0^\ell$
by this function we gain the property,
\begin{equation}\label{qc.16}
\mathrm{Co\,}(u_0^\ell )=u_0^\ell .
\end{equation}

\par Similarly, in the discussion leading to (\ref{sw.11.5}) we see
that we can choose $u_{\pm 1}^\ell$ so that
\begin{equation}\label{qc.17}
u_1^\ell =\mathrm{Co\,}(u_{-1}^\ell ).
\end{equation}
Since $u_{-1}^\ell$, $u_1^\ell$ form a basis for the space of null
solutions of $P_\varepsilon -E$, we get from (\ref{sw.11.5}),
(\ref{qc.15}), that $\mathrm{Co\,}(\tau _+)=\tau _-$, so after
replacing $u_{\mp 1}^\ell$ by $\tau _{\pm}u_{\mp 1}^\ell$, we still
have (\ref{qc.17}) for the new functions $u_{\mp 1}^\ell$ in
(\ref{sw.12}).

\par Next, notice that $I_\ell$ in (\ref{sw.18}) satisfies
\begin{equation}\label{qc.18}
I_\ell^*=I_\ell .
\end{equation}
This means that
\begin{equation}\label{qc.19}
v_1^\ell =\mathrm{Co\,}(v_{-1}^\ell ),
\end{equation}
for $v_{\pm 1}^\ell$ in (\ref{sw.21}).

\par The whole discussion so far applies with ``$\ell$'' replaced by
``$r$'' and we get
\begin{equation}\label{qc.20}
\mathrm{Co\,}(u_0^r)=u_0^r,\ I_r^*=I_r,\
u_1^r=\mathrm{Co\,}(u_{-1}^r),\ v_1^r=\mathrm{Co\,}(v_{-1}^r).
\end{equation}
Moreover, we check that 
\begin{equation}\label{qc.21}
J^*=J.
\end{equation}

\par From (\ref{sw.25}), (\ref{sw.27}) we now get
\begin{equation}\label{qc.22}
\mathrm{Co\,}(v_0^r)=v_0^r.
\end{equation}
Similarly,
\begin{equation}\label{qc.23}
\mathrm{Co\,}(v_0^\ell )=v_0^\ell .
\end{equation}
Then in (\ref{sw.30}), (\ref{sw.31}) we must have 
\begin{equation}\label{qc.24}
\mathrm{Co\,}(\gamma _+^\ell )=\gamma _-^\ell,\ \ \mathrm{Co\,}(\gamma
_+^r)=\gamma _-^r.
\end{equation}
It follows that $f$ in (\ref{qc.9}), (\ref{qc.10}) satisfies
\begin{equation}\label{qc.25}
f^*=f.
\end{equation}

\par Let us finally use the ${\cal PT}$-symmetry assumption (A5) or
equivalently (\ref{qc.13}). We then check that
\begin{equation}\label{qc.26}
I_\ell^\dagger =I_r,
\end{equation}
and
\begin{equation}\label{qc.27}
J^\dagger=J,
\end{equation}
and that we can choose 
\begin{equation}\label{qc.28}
u_0^r=\mathrm{Pt\,}(u_0^\ell),\ u_1^r=\mathrm{Pt\,}(u_{-1}^\ell ),\
u_{-1}^r=\mathrm{Pt\,}(u_1^\ell ),
\end{equation}
\begin{equation}\label{qc.29}
v_{\pm 1}^r=\mathrm{Pt\,}(v_{\mp 1}^\ell ).
\end{equation}
Then by (\ref{qc.3}), (\ref{qc.4}),
\begin{equation}\label{qc.30}
v_0^r=\mathrm{Pt\,}(v_0^\ell ),
\end{equation}
and from (\ref{sw.30}), (\ref{sw.31}) we infer that 
\begin{equation}\label{qc.31}
\gamma _{\pm}^r=\mathrm{Pt\,}(\gamma _{\pm}^\ell ).
\end{equation}
It follows that 
\begin{equation}\label{qc.32}
f^\dagger =f.
\end{equation}

\par We have seen that the zeros of $f(\cdot ,\varepsilon )$ coincide
with the eigenvalues of $P_\varepsilon $ in a neighborhood of
$E_0$. We end this section by showing that the multiplicities agree
also. 

\par Recall (\ref{qc.7}), (\ref{qc.8}) that we write as 
\[
\begin{split}
u_0^\ell&=a^\ell v_0^\ell +b^\ell v_0^r,\\
u_0^r&=a^r v_0^\ell+ b^r v_0^r.
\end{split}
\]
Taking the Wronskians, we get
$$
W(u_0^\ell,u_0^r)=\det \begin{pmatrix}a^\ell & b^\ell\\ a^r
  &b^r\end{pmatrix} W(v_0^\ell ,v_0^r). 
$$
Here the last Wronskian is non-vanishing, so up to a non-vanishing
holomorphic factor $f$ is equal to $W(u_0^\ell ,u_0^r)$ and the zeros
of $f$, counted with their multiplicity coincide with those of
$W(v_0^\ell,v_0^r)$. 

\par Hence it remains to identify eigenvalues of $P_\varepsilon $ counted
with their multiplicity with the zeros of the Wronskian $W(u_0^\ell,
u_0^r)$. For that we can widen the perspective slightly and apply a
general discussion: 

\par Let $a\in \mathbb{R}$. Let $\lambda $ vary in
$\mathrm{neigh\,}(E_0,\mathbb{C})$. (The symbol ``$E$'' will
temporarily be used to denote operators.) Using the ellipticity of
$P_\varepsilon -\lambda $ near $+\infty $, we see that
the right Dirichlet problem 
$$
(P_\varepsilon -\lambda )u=v,\ \ u(a)=v_+
$$
has a unique solution $u\in H^2(]a,+\infty [)$ for every $(v,v_+)\in
H^0(]a,+\infty [)\times \mathbb{C}$. Similarly, by using the
ellipticity near $-\infty $ we see that the corresponding left
Dirichlet problem has a unique solution $u\in H^2(]-\infty ,a [)$ for every $(v,v_+)\in
H^0(]-\infty ,a [)\times \mathbb{C}$. Denote the solutions to the two
problems by $u=E_rv+E_r^+v_+$ and $u=E_\ell v+E_\ell ^+v_+$ respectively. 

\par It follows that the Grushin problem 
$$
\begin{cases}
(P_\varepsilon -\lambda )u+R_-u_-=v,\\
R_+u=v_+,
\end{cases}
$$
has a unique solution 
$$
(u,u_-)\in \left(\left(H^2(]-\infty ,a[)\oplus H^2( ]a,+\infty [)
  \right)\cap H^1(\mathbb{R}) \right) \times \mathbb{C}
$$
for every $(v,v_+)\in H^0(\mathbb{R})\times \mathbb{C}$, where 
$$
R_+u:=u(a),\ \ R_-u_-=u_-\delta _a,
$$
and $\delta _a$ denotes the delta function at $x=a$. Indeed, the
solution is given by
$$
u(x)=\begin{cases} E_\ell v+E_\ell^+v_+,\ x<a\\ E_r v+E_r^+v_+,\ x>a,
\end{cases} \ \ u_-=(h^2\partial _zu)(a-0)-(h^2\partial _zu)(a+0).
$$

\par We write this solution,
$$
\begin{pmatrix} u\\ u_-\end{pmatrix} =
\begin{pmatrix} E &E_+\\ E_-
  &E_{-+}\end{pmatrix} \begin{pmatrix}v\\ v_+\end{pmatrix}. 
$$
It is a standard fact  for the Grushin reduction  (see \cite[Section 6, Appendix A]{MeSj03}, or \cite{SjZw07}) that the eigenvalues
counted with their multiplicity, coincide (near $E_0$) with the zeros
of $E_{-+}(z)$ counted with their multiplicity ($\varepsilon $ is here
fixed and suppressed from the notation most of the time).  For completeness, we recall the proof. Let $z_{0}$ be an eigenvalue of $P$. Its multiplicity $m(z_{0})$ is equal to the rank, and hence to the trace of the spectral projection
$$
m(z_{0})=\tr\frac{1}{2i\pi}\int_{\gamma} (z-P)^{-1} dz,
$$
where $\gamma$ is the oriented boundary of a small disc centered at $z_{0}$. Now
$$
(z-P)^{-1}=-E(z)+E_{+}(z)E^{-1}_{-+}(z)E_{-}(z),
$$
and $E(z)$ is holomorphic near $z_{0}$, so 
\begin{equation}
\begin{split}
m(z_{0}) &= \tr\frac{1}{2i\pi}\int_{\gamma} E_{+}(z)E^{-1}_{-+}(z)E_{-}(z) dz
\\
&=\frac{1}{2i\pi}\int_{\gamma} \tr(E_{+}(z)E^{-1}_{-+}(z)E_{-}(z)) dz\\
&=\frac{1}{2i\pi}\int_{\gamma} \tr(E^{-1}_{-+}(z)E_{-}(z)E_{+}(z)) dz\\
&=\frac{1}{2i\pi}\int_{\gamma} E^{-1}_{-+}(z)E_{-}(z)E_{+}(z) dz.
\end{split}
\end{equation}
Finally, since $E_{-}(z)E_{+}(z)=\partial_{z}E^{-1}_{-+}(z)$, we see that $m(z_{0})$ is equal to the multiplicity of
$z_{0}$ as a zero of $E_{-+}$.

\par Denoting $u_-=E_\ell^+(1)$, $u_+=E_r^+(1)$, we notice that 
$$
E_{-+}=h^2(\partial _zu_-(a)-\partial _zu_+(a))=hW(u_-,u_+),
$$
since $u_\mp (a)=1$. Thus, in a neighborhood of $E_0$, the eigenvalues
of $P_\varepsilon $ counted with their multiplicity can be identified
with the zeros of $\lambda \mapsto W(u_-(\lambda ),u_+(\lambda ))$. 

\par If $\mathrm{neigh\,}(E_0,\mathbb{C})\ni \lambda \mapsto
\widetilde{u}_{\mp}(z,\lambda )$ are holomorphic families of null
solutions to $P_\varepsilon -\lambda $, exponentially decaying near $\mp
\infty $ and $\not\equiv 0$, $\forall \lambda $, then
$\widetilde{u}_\mp =\sigma _\mp (\lambda )u_\mp$, where $\sigma _\mp$
are holomorphic in $\lambda $ and non-vanishing. Thus the zeros of
$W(\widetilde{u}_-,\widetilde{u}_+)=\sigma _-(\lambda )\sigma
_+(\lambda )W(u_-,u_+)$ coincides with those of $W(u_-,u_+)$ and we have
completed the identification.

\section{The behaviour of the eigenvalues}\label{ev}

In order to study the zeros of $f(\cdot ,\varepsilon )$, we first
recollect the various symmetries:
\begin{equation}\label{ev.2}
J^*=J=J^\dagger,\ \ I_\ell^*=I_\ell,\ \ I_r^*=I_r,\ \ I_\ell^\dagger =I_r,
\end{equation}
and
\begin{equation}\label{ev.3}
\left(\gamma _-^\ell \right)^*=\gamma _+^\ell, \ \ \left(\gamma _-^r\right)^*=\gamma _+^r, 
\ \ \left(\gamma _\pm^\ell \right)^\dagger =\gamma _\pm ^r ,
\end{equation}
where we recall that $\gamma _{\pm }^{\bullet } =1+{\cal O}(h)$.

\par Let us first look at the factor
\begin{equation}\label{ev.4}
g=g_\ell (E,\varepsilon )=\left( e^{\frac{i}{h}I_\ell}\gamma _-^\ell +e^{-\frac{i}{h}I_\ell}\gamma _+^\ell \right)
\end{equation}
and drop the super/subscript $\ell $ when convenient to do so. From (\ref{ev.2}),
(\ref{ev.3}), we infer that
\begin{equation}\label{ev.5}
g^*=g
\end{equation}
Write
\begin{equation}\label{ev.6}
\begin{split}
\gamma ^\ell_-&=\left(\gamma _-^\ell\gamma _+^\ell \right)^{\frac{1}{2}}\left(\gamma
  _-^\ell/\gamma _+^\ell \right)^{\frac{1}{2}}=:\rho _\ell e^{i\theta
  _\ell}\\
\gamma ^\ell_+&=\left(\gamma _-^\ell\gamma _+^\ell \right)^{\frac{1}{2}}\left(\gamma
  _-^\ell/\gamma _+^\ell \right)^{-\frac{1}{2}}=:\rho _\ell e^{-i\theta _\ell},
\end{split}
\end{equation}
where we choose the branches of the square roots close to 1 and the
logarithm close to 0, so that $\rho =1+{\cal O}(h)$, $\theta
={\cal O}(h)$. Then
\begin{equation}\label{ev.7}
\rho ^*=\rho ,
\end{equation}
so $\left(e^{i\theta } \right)^*=e^{-i\theta }$ and hence
\begin{equation}\label{ev.8}
\theta ^*=\theta .
\end{equation}
We write
\begin{equation}\label{ev.9}
g_\ell =\rho _\ell
(e^{\frac{i}{h}\widetilde{I}_\ell}+e^{-\frac{i}{h}\widetilde{I}_\ell}),\
\ \widetilde{I}_\ell =I_\ell +h\theta _\ell =I_\ell +{\cal O}(h^2)=\widetilde{I}_\ell^*. 
\end{equation}

Similarly, we consider
\begin{equation}\label{ev.10}
g=g_r (E,\varepsilon )=\left( e^{\frac{i}{h}I_r}\gamma _+^r
  +e^{-\frac{i}{h}I_r}\gamma _-^r \right) .
\end{equation}
and again we have (\ref{ev.5}), now with $g=g_r$.
Write
\begin{equation}\label{ev.11}
\begin{split}
\gamma ^r_+&=\left(\gamma _-^r\gamma _+^r \right)^{\frac{1}{2}}\left(\gamma
  _+^r/\gamma _-^r \right)^{\frac{1}{2}}=:\rho _r e^{i\theta
  _r}\\
\gamma ^r_-&=\left(\gamma _-^r\gamma _+^r \right)^{\frac{1}{2}}\left(\gamma
  _+^r/\gamma _-^r \right)^{-\frac{1}{2}}=:\rho _r e^{-i\theta _r}.
\end{split}
\end{equation}
again with $\rho =1+{\cal O}(h)$, $\theta
={\cal O}(h)$ satisfying (\ref{ev.7}), (\ref{ev.8}).

We write
\begin{equation}\label{ev.12}
g_r =\rho _r
(e^{\frac{i}{h}\widetilde{I}_r}+e^{-\frac{i}{h}\widetilde{I}_r}),\
\ \widetilde{I}_r= I_r +h\theta _r =I_r +{\cal O}(h^2)=\widetilde{I}_r^*. 
\end{equation}

We now take into account the ${\cal PT}$
symmetry. Clearly, \begin{equation}\label{ev.13} \rho _\ell ^\dagger
  =\rho _r
\end{equation} and from $\left(\gamma _-^\ell
\right)^\dagger = \gamma _-^r$, we get
$\rho _r e^{-i\theta _\ell^\dagger}=\rho _re^{-i\theta _r}$, so  
\begin{equation}\label{ev.14}
\theta _\ell^\dagger = \theta _r \hbox{ and hence }\widetilde{I}_\ell^\dagger=\widetilde{I}_r. 
\end{equation}

\par Using also that $I_\ell ^\dagger =I_r$, we can rewrite $f$ in
(\ref{qc.10}) as 
\[
\begin{split}
  f(E,\varepsilon )=&\frac{\rho \rho ^\dagger}{4}\left(
    e^{\frac{i}{h}\widetilde{I}}+e^{-\frac{i}{h}\widetilde{I}}\right)
  \left( e^{\frac{i}{h}\widetilde{I}^\dagger}+e^{-\frac{i}{h}\widetilde{I}^{\dagger}}\right)\\
  &-\frac{1}{4}e^{-2J/h}\sin (I/h)\sin (I^\dagger /h)\\
=& \rho \rho ^\dagger \cos (\widetilde{I}/h)\cos (\widetilde{I}^\dagger /h)
-\frac{1}{4}e^{-2J/h}\sin (I/h)\sin (I^\dagger /h) 
\end{split}
\]
where $\widetilde{I}=\widetilde{I}_\ell$, $I=I_\ell$, $\rho =\rho
_\ell$. Dividing this function with $\rho \rho ^\dagger$ will not
modify the zeros and we get the new (slightly modified) function that
we shall denote by the same symbol, 
\begin{equation}\label{ev.15}
\begin{split}
  f(E,\varepsilon )= \cos (\widetilde{I}/h)\cos (\widetilde{I}^\dagger /h)
-\frac{1}{4}e^{-2\widetilde{J}/h}\sin (I/h)\sin (I^\dagger /h),
\end{split}
\end{equation}
where 
\begin{equation}\label{ev.15.5}
\widetilde{J}=J+h\ln (\rho \rho ^\dagger )=J+{\cal O}(h^2).
\end{equation}
We know that 
\begin{equation}\label{ev.16}
f^\dagger =f= f^*.
\end{equation}
From 
\begin{equation}\label{ev.17}
I(E,\varepsilon )=\int_{\alpha _\ell}^{\beta _\ell}(E-V_\varepsilon (x))^{\frac{1}{2}}dx,
\end{equation}
we get,
\begin{equation}\label{ev.18}
\partial _E I(E,\varepsilon )=\frac{1}{2}\int_{\alpha _\ell}^{\beta _\ell}(E-V_\varepsilon (x))^{-\frac{1}{2}}dx,
\end{equation}
and
\begin{equation}\label{ev.19}
\partial _\varepsilon  I(E,\varepsilon )=\frac{1}{2i}\int_{\alpha _\ell}^{\beta _\ell}(E-V_\varepsilon (x))^{-\frac{1}{2}}W(x)dx.
\end{equation}
It follows that
\begin{equation}\label{ev.20}\partial _EI(E,0)>0,\ i\partial
  _\varepsilon I(E,0)\in \mathbb{ R}\hbox{ when }E\in
  \mathrm{neigh\,}(E_0,\mathbb{ R}).\end{equation}
We now adopt the assumption (A7) so that the integral in (\ref{ev.19})
is non-vanishing for $(E,\varepsilon )=(E_0,0)$ and in order to fix the
ideas (possibly after replacing $(\varepsilon ,W)$ by $(-\varepsilon ,-W)$) that
\begin{equation}\label{ev.20.5}
\int_{\alpha _\ell}^{\beta _\ell}(E_0-V_0(x))^{-\frac{1}{2}}W(x)dx>0,
\end{equation}
so that
\begin{equation}\label{ev.21}
i\partial _\varepsilon I(E,0)>0\hbox{ for }E\in \mathrm{neigh\,}(E_0,\mathbb{R}).
\end{equation}

\par Let $I_0=I(E_0,0)\in \mathbb{ R}$. In view of the first part of
(\ref{ev.20}), the map $I(\cdot ,\varepsilon
):\,\mathrm{neigh\,}(E_0,\mathbb{ C})\to \mathrm{neigh\,}(I_0,\mathbb{ C})$ is
bijective for $\varepsilon \in \mathrm{neigh\,}(0,\mathbb{ R})$ with an
inverse $K(\cdot ,\varepsilon ):\, \mathrm{neigh\,}(I_0,\mathbb{ C})\to
\mathrm{neigh\,} (E_0,\mathbb{ C})$ such that $K(\iota ,\varepsilon )$ is
holomorphic in $(\iota ,\varepsilon )$. We also know that $K(\iota ,0)$ is real
when $\iota $ is real. The property $\widetilde{I}^*=\widetilde{I}$ implies
that $\widetilde{I}(E,\varepsilon )$ is real when $\varepsilon =0$ and since
$\widetilde{I}=I+{\cal O}(h^2)$, we see that $\widetilde{I}(\cdot
,\varepsilon )$ has a local inverse $\widetilde{K}$ with the same
properties as $K$. Further, $\widetilde{K}=K+{\cal O}(h^2)$ has a
complete asymptotic expansion in powers of $h$ in the space of
holomorphic functions defined in a neighborhood of $(I_0,0)$.

\par The zeros of the factor $\cos (\widetilde{I}/h) $ are given by 
\begin{equation}\label{ev.22}
\widetilde{I}(E,\varepsilon )=\left(k+\frac{1}{2} \right)\pi h, 
\end{equation}
for $k\in \mathbb{ Z}$ such that $\left(k+\frac{1}{2} \right)\pi h$
belongs to a neighborhood of $I_0$. (The classical action for the left
potential well is equal to $2I$, so we recognize the Bohr-Sommerfeld
quantization condition $2\widetilde{I}(E,\varepsilon )=(k+1/2)2\pi h$.) They are
situated on the real-analytic curve
\begin{equation}\label{ev.23}\widetilde{\Gamma }(\varepsilon )=\{ E\in
  \mathrm{neigh\,}(E_0,\mathbb{ C});\, \widetilde{I}(E,\varepsilon )\in \mathbb{R}\}. \end{equation} Alternatively, the zeros are of the form
\begin{equation}\label{ev.24}
\widetilde{E}_k=\widetilde{K}((k+1/2)\pi h,\varepsilon ),
\end{equation} 
and the curve (\ref{ev.23}) is of the form $\widetilde{\Gamma
}(\varepsilon )=\widetilde{K}(\mathrm{neigh\,}(0,\mathbb{ R}),\varepsilon )$. 

\par The curve $\widetilde{\Gamma }$ can also be represented in the
form 
$$
\widetilde{\Gamma }:\ \ \im E=\widetilde{g}(\re E,\varepsilon ),
$$
where 
$$
\widetilde{g}(t,\varepsilon )\sim g(t,\varepsilon )+hg_1(t,\varepsilon )+...,
$$
and 
$$
\Gamma :\ \ \im E=g(\re E,\varepsilon )
$$
is the curve, determined by the condition $I(E,\varepsilon )\in \mathbb{
  R}$. We know that $\widetilde{\Gamma }$, $\Gamma $ are real segments
when $\varepsilon =0$, so 
$$
\widetilde{g},\ g={\cal O}(\varepsilon ). 
$$

\par Writing 
$$
\im I(\re E+ig(\re E,\varepsilon ),\varepsilon )=0,
$$
and differentiating with respect to $\varepsilon $ at $\varepsilon =0$, we get
$$
\partial _\varepsilon g(\re E,0)=\frac{i\partial_\varepsilon  I}{\partial _E I}(\re
E,0)>0.
$$
Hence, by Taylor expansion,
$$
g(\re E,\varepsilon )=\frac{i\partial _\varepsilon I }{\partial _E I}(\re E,0) \varepsilon +{\cal O}(\varepsilon ^2). $$
It follows that
$$
\widetilde{g}(\re E,\varepsilon )=\left(\frac{i\partial _\varepsilon I}{\partial _E I}(\re
E,0) +{\cal O}(h^2) \right) \varepsilon +{\cal O}(\varepsilon ^2). $$

Similarly, by differentiating the equation
$\widetilde{I}(\widetilde{E}_k(\varepsilon ),\varepsilon )=(k+1/2)\pi h$
with respect to $\varepsilon $, we get 
$$
\partial _\varepsilon \widetilde{E}_k(\varepsilon
)=i\frac{i\partial _\varepsilon \widetilde{I}}{\partial
  _E\widetilde{I}}\left(\widetilde{E}_k(\varepsilon ),\varepsilon  \right),
$$
and again by Taylor expansion at $\varepsilon =0$,
\begin{equation}\label{ev.24.5}\begin{split}
\widetilde{E}_k(\varepsilon
)&=\widetilde{E}_k(0)+i\frac{i\partial _\varepsilon \widetilde{I} }{\partial
  _E\widetilde{I}}\left(\widetilde{E}_k(0 ),0 \right)\varepsilon +{\cal
  O}(\varepsilon ^2)\\
&=({E}_k(0)+{\cal O}(h^2))+i\left( \frac{i\partial _\varepsilon
  {I}}{\partial
  _E{I}}\left(\widetilde{E}_k(0 ),0 \right)+{\cal O}(h^2) \right)\varepsilon +{\cal
  O}(\varepsilon ^2)
\end{split}
\end{equation}
Here $\widetilde{E}_k(0)$ and $E_k(0)$ are real and given by the
Bohr-Sommerfeld conditions
\[
\widetilde{I}(\widetilde{E}_k(0),0)=(k+1/2)\pi h,\ \ 
{I}({E}_k(0),0)=(k+1/2)\pi h .
\]

\par The zeros of the second factor
$$
\cos(\widetilde{I}^\dagger /h)(E,\varepsilon )=\overline{\cos
  (\widetilde{I}/h)(\overline{E},\varepsilon )},
$$
are given by $E=\overline{\widetilde{E}_k(\varepsilon
  )}$ and they are situated on the complex conjugate curve
$$
\im E=-\widetilde{g}(\re E,\varepsilon ).
$$

\par We next make an exponential localization of the zeros of $f$. We
have,
$$
|\cos z|\asymp \min (\mathrm{dist\,}(z,(\mathbb{ Z}+1/2)\pi
),1)e^{|\im z|},
$$
so for $E\in \mathrm{neigh\,}(E_0,\mathbb{ C})$,
$$
|\cos (\widetilde{I}/h)|\asymp \min
\left(\frac{1}{h}\mathrm{dist\,}(E,\{\widetilde{E}_k \}),1 \right)
e^{|\im  \widetilde{I}|/h}.
$$
Since $\widetilde{I}=I+{\cal O}(h^2)$, we get
\begin{equation}\label{ev.25}
|\cos (\widetilde{I}/h)|\asymp \min
\left(\frac{1}{h}\mathrm{dist\,}(E,\{\widetilde{E}_k \}),1 \right)
e^{|\im  I|/h}.
\end{equation}
Similarly,
\begin{equation}\label{ev.26}
|\cos (\widetilde{I}^\dagger /h)|\asymp \min
\left(\frac{1}{h}\mathrm{dist\,}(E,\{\overline{\widetilde{E}_k } \}),1 \right)
e^{|\im  I^\dagger |/h},
\end{equation}
\begin{equation}\label{ev.27}
|\sin I/h|\le 2e^{|\im  I|/h},
\end{equation}
and
\begin{equation}\label{ev.28}
|\sin I^\dagger /h|\le 2e^{|\im  I^\dagger |/h}.
\end{equation}
We conclude that $f(E,\varepsilon )\ne 0$ when
$$
|\cos (\widetilde{I}/h)\cos (\widetilde{I}^\dagger /h)|\gg e^{-2\re
  J/h}|\sin (I/h)\sin (I^\dagger /h)|
$$
and that this holds when
\begin{equation}\label{ev.29}
\min \left(\frac{1}{h}\mathrm{dist\,}(E,\{\widetilde{E}_k \} ),1
\right)
\min \left(\frac{1}{h}\mathrm{dist\,}(E,\{\overline{\widetilde{E}_k }
  \} ),1 \right)\gg e^{-2\re J(E,\varepsilon )/h}.
\end{equation}
Thus if $C>0$ is large enough, then $f(E,\varepsilon )\ne 0$ when
\begin{equation}\label{ev.30}
E\not\in \bigcup_k D(\widetilde{E}_k,Che^{-\re J(E,\varepsilon )/h})
\cup    
\bigcup_k D(\overline{\widetilde{E}_k},Che^{-\re J(E,\varepsilon )/h}).
\end{equation}
Now we observe that for any $\widehat{E}\in \mathrm{neigh\,}(E_0,\mathbb{
  C})$,
\[
\begin{split}
  E\in D(\widehat{E},Che^{-\re J(E,\varepsilon )/h})&\Longrightarrow E\in
  D(\widehat{E},2Che^{-\re J(\widehat{E},\varepsilon )/h}),\\
E\in D(\widehat{E},2Che^{-\re J(E,\varepsilon )/h})&\Longleftarrow E\in
  D(\widehat{E},Che^{-\re J(\widehat{E},\varepsilon )/h}).
\end{split}
\]
After doubling the constant in (\ref{ev.30}), we conclude that
\begin{equation}\label{ev.31}
f^{-1}(0,\varepsilon )\subset \bigcup_k D(\widetilde{E}_k,Che^{-\re J(\widetilde{E}_k,\varepsilon )/h})
\cup    
\bigcup_k D(\overline{\widetilde{E}_k },Che^{-\re J(\overline{\widetilde{E}_k},\varepsilon )/h}).
\end{equation}

If $E\in D\left(\widetilde{E}_k,Che^{-\re J(\widetilde{E}_k)/h)}\right)$ and if
\begin{equation}\label{ev.32}
\varepsilon \gg he^{-\re J(\widetilde{E}_k)/h},
\end{equation}
then $\mathrm{dist\,}(E,\{\overline{\widetilde{E}_n} \} )\asymp
\varepsilon $, and from (\ref{ev.29}) we conclude that $f(E,\varepsilon )\ne
0$ if
\begin{equation}\label{ev.33}
\frac{1}{h}\mathrm{dist\,}(E,\widetilde{E}_k)\min \left(\frac{\varepsilon
  }{h},1 \right) \gg e^{-2\re J(\widetilde{E}_k,\varepsilon )/h}.
\end{equation}
Thus, the zeros of $f$ in $D(\widetilde{E}_k,Che^{-\re
  J(\widetilde{E}_k)/h})$ are contained in 
$$
D\left(\widetilde{E}_k,\frac{Ch}{\min (\varepsilon /h,1)}e^{-2\re
    J(\widetilde{E}_k,\varepsilon )/h} \right).
$$
If we drop the assumption (\ref{ev.32}), we have
\begin{equation}\label{ev.34}
\begin{split}
&f^{-1}(0,\varepsilon )\bigcap D\left(\widetilde{E}_k , Che^{-\re
    J(\widetilde{E}_k,\varepsilon )/h} \right)\\
&\subset D\left(\widetilde{E}_k,Ch\min \left( 1, \max (h/\varepsilon
  ,1)e^{-\re J(\widetilde{E}_k,\varepsilon )/h} \right) e^{-\re
J(\widetilde{E}_k,\varepsilon )/h}\right).
\end{split}
\end{equation}
The same discussion is valid with $\widetilde{E}_k$ replaced by
$\overline{\widetilde{E}_k}$ and we get the following improvement of (\ref{ev.31}):
\begin{equation}\label{ev.35}
f^{-1}(0,\varepsilon )\subset \bigcup_k
D\left( \widetilde{E}_k,r(\widetilde{E}_k,\varepsilon )\right)
\cup
\bigcup_{k}D\left(\overline{\widetilde{E}_k},r\left(\overline{\widetilde{E}_k},\varepsilon
  \right) \right),
\end{equation}
where 
\begin{equation}\label{ev.36}
r(E,\varepsilon )=Ch \min \left( 1,\max (h/\varepsilon ,1)e^{-\re
    J(E,\varepsilon )/h} \right) e^{-\re J(E,\varepsilon )/h}.
\end{equation}

\par By inserting a deformation parameter $\theta \in [0,1]$ in front
of the second term in the last expression for $f$ in (\ref{ev.15}), we
will not change the localization (\ref{ev.35}) of the zeros and the
number of such zeros in each connected component of the set in the
right hand side of that inclusion is independent of $\theta $. It
follows that
\begin{itemize}
\item when these discs are disjoint, $f(\cdot ,\varepsilon )$ has precisely one zero in each of
  $D\left(\widetilde{E}_k,r\left(\widetilde{E}_k,\varepsilon \right)\right)$ and $D\left(\overline{\widetilde{E}_k},r\left(\overline{\widetilde{E}_k},\varepsilon
  \right) \right)$.
\item in general $f(\cdot ,\varepsilon )$ has precisely 2 zeros in
  $$D\left(\widetilde{E}_k,r\left(\widetilde{E}_k,\varepsilon
\right)\right)\cup
    D\left(\overline{\widetilde{E}_k},r\left(\overline{\widetilde{E}_k},\varepsilon
      \right) \right).$$
\end{itemize}

\par From the first of these observations, (\ref{ev.31}), (\ref{ev.24.5}) and the fact
that 
\begin{equation}\label{ev.37}
\frac{i\partial _\varepsilon I}{\partial _EI}(E,0)>0\hbox{ when }E\hbox{
  is real,}
\end{equation}
we get
\begin{proposition}\label{ev1}
Assume (\ref{ev.20.5}) for $E\in \mathrm{neigh\,}(E_0,\mathbb{ R})$ and
that $\varepsilon $ is real and 
$$
1\gg |\varepsilon |\ge he^{-(\re J(E_0)-1/C)/h}
$$
for some positive constant $C$. Then the eigenvalues in
$\mathrm{neigh\,}(E_0,\mathbb{ C})$ are simple and non-real of the form
$z_k(\varepsilon ;h)$, $\overline{z_k(\varepsilon ;h)}$, $k\in \mathbb{ Z}$,
where
$$
z_k=\widetilde{E}_k(\varepsilon ;h)+{\cal O}(h)e^{-\re
  J(\widetilde{E}_k,\varepsilon )/h}
$$
and we recall (\ref{ev.24.5}), (\ref{ev.24}). The term ${\cal
  O}(h)e^{-\re J(\widetilde{E}_k,\varepsilon )/h}$ can be replaced by
${\cal O}(r(\widetilde{E}_k,\varepsilon ))$, where $r(E,\varepsilon )$
is defined in (\ref{ev.36}).
\end{proposition}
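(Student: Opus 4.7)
The plan is to read Proposition \ref{ev1} as a direct consequence of the localization result (\ref{ev.35})--(\ref{ev.36}) combined with the explicit asymptotic (\ref{ev.24.5}) for $\widetilde{E}_k(\varepsilon;h)$. The whole point of the lower bound $|\varepsilon|\ge he^{-(\re J(E_0)-1/C)/h}$ is to guarantee that the two families of discs in (\ref{ev.35})---those around the $\widetilde{E}_k(\varepsilon;h)$ and those around their conjugates---are pushed apart by more than their radii, which reduces us to the first bullet of the counting result following (\ref{ev.36}).

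First, I would use (\ref{ev.24.5}) together with the positivity (\ref{ev.37}) of $(i\partial_\varepsilon I)/(\partial_E I)$ at real energies to write, for real $\varepsilon$,
$$
\im \widetilde{E}_k(\varepsilon;h)=c_k(h)\,\varepsilon+{\cal O}(\varepsilon^2),
$$
where $c_k(h)$ is bounded below by a strictly positive constant that is uniform in $k$ (for $\widetilde{E}_k(0)$ in a fixed real neighborhood of $E_0$) and in small $h$. Hence $|\im\widetilde{E}_k|\asymp|\varepsilon|$. On the other hand, $r(\widetilde{E}_k,\varepsilon)\le Che^{-\re J(\widetilde{E}_k,\varepsilon)/h}$ by (\ref{ev.36}), and by continuity of $\re J$ we have $\re J(\widetilde{E}_k,\varepsilon)=\re J(E_0,0)+o(1)$ as $h,\varepsilon\to 0$. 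The lower bound on $|\varepsilon|$ then yields
$$
\frac{|\im\widetilde{E}_k|}{r(\widetilde{E}_k,\varepsilon)}\gtrsim \frac{|\varepsilon|}{h}\,e^{\re J(E_0,0)/h-o(1)/h}\gtrsim e^{1/(Ch)-o(1)/h}\longrightarrow +\infty,
$$
the $1/C$ slack in the exponent of the hypothesis precisely absorbing the $o(1)$ error. Consequently every disc $D(\widetilde{E}_k,r(\widetilde{E}_k,\varepsilon))$ lies strictly in the open upper (or lower) half-plane and is disjoint from every disc built around a $\overline{\widetilde{E}_j}$.

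Next, the consecutive $\widetilde{E}_k$'s are separated by a distance of order $h$ (from the Bohr--Sommerfeld form (\ref{ev.22}) together with $\partial_E\widetilde{I}\ne 0$), whereas $r(\widetilde{E}_k,\varepsilon)\le Che^{-\re J/h}\ll h$. So the discs inside each family are also pairwise disjoint, and we are in the first (``disjoint'') bullet case following (\ref{ev.36}): $f(\cdot,\varepsilon)$ has exactly one simple zero $z_k$ in each $D(\widetilde{E}_k,r(\widetilde{E}_k,\varepsilon))$ and exactly one simple zero in each $D(\overline{\widetilde{E}_k},r(\overline{\widetilde{E}_k},\varepsilon))$. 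By Theorem \ref{int.6} these zeros exhaust the eigenvalues of $P_\varepsilon$ in a fixed neighborhood of $E_0$, with correct algebraic multiplicities; the symmetry $f^*=f$ from (\ref{ev.16}) forces the set of zeros to be invariant under complex conjugation, so the zero in the disc around $\overline{\widetilde{E}_k}$ is $\overline{z_k}$. Finally, $z_k\in D(\widetilde{E}_k,r(\widetilde{E}_k,\varepsilon))$ gives the estimate $z_k=\widetilde{E}_k+{\cal O}(r(\widetilde{E}_k,\varepsilon))$, and bounding $r$ trivially by $Che^{-\re J(\widetilde{E}_k,\varepsilon)/h}$ gives the weaker form stated first.

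The only delicate bookkeeping---and the point I would expect to have to verify carefully---is the quantitative comparison in the second paragraph: one must confirm that the constant $C$ appearing in the hypothesis on $|\varepsilon|$ can be absorbed into (and chosen larger than) the constant $C$ coming from the localization estimate (\ref{ev.36}) and the constant from the error terms in (\ref{ev.24.5}). There is no real obstacle, just a verification that the $1/(Ch)$ in the exponential dominates the continuity error $o(1)/h$ for $h$ small, which is transparent. Everything else is a direct application of the material already assembled in the preceding section.
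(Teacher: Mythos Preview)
Your argument is essentially the same as the paper's: the proposition is stated immediately after the two bullet points following (\ref{ev.36}), and the paper simply says it follows ``from the first of these observations, (\ref{ev.31}), (\ref{ev.24.5}) and the fact (\ref{ev.37})''. You have correctly unpacked exactly this, showing that the lower bound on $|\varepsilon|$ forces $|\im\widetilde{E}_k|\asymp|\varepsilon|$ to dominate the disc radii, so the discs are disjoint and the first bullet applies.

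One small correction: to conclude that the zero in $D(\overline{\widetilde{E}_k},r)$ is $\overline{z_k}$ you invoke $f^*=f$. But $f^*(E,\varepsilon)=\overline{f(\overline{E},-\varepsilon)}$, so $f^*=f$ only relates the zeros for $\varepsilon$ to those for $-\varepsilon$. The symmetry you actually want is $f^\dagger=f$ (also recorded in (\ref{ev.16})), since $f^\dagger(E,\varepsilon)=\overline{f(\overline{E},\varepsilon)}$; this is what makes the zero set of $f(\cdot,\varepsilon)$ invariant under complex conjugation for fixed real $\varepsilon$. Equivalently, one can appeal directly to the ${\cal PT}$-symmetry of $P_\varepsilon$, which makes its spectrum conjugation-invariant. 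With that adjustment your proof is complete and matches the paper's.
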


\par It remains to make a more detailed study, when 
$$
|\varepsilon |\le he^{-(\re J(E_0)-1/C)/h},
$$
and for that we shall view $\widetilde{E}_k(0;h)$ as the nondegenerate
local minima of $f_0(E,0)$, $E\in \mathrm{neigh\,}(E_0,\mathbb{ R})$,
where we put for $0\le \theta \le 1$,
\begin{equation}\label{ev.37.5}
f_\theta (E,\varepsilon )=\cos (\widetilde{I}/h)\cos
(\widetilde{I}^\dagger /h)-\frac{\theta }{4}e^{-2\widetilde{J}/h}\sin (I/h)\sin
(I^\dagger /h),
\end{equation}
so that $f_1=f$ in (\ref{ev.15}). Using that $f_\theta ^*=f_\theta $
and $f_\theta ^\dagger =f_\theta $ we see that for real $E$
\begin{itemize}
\item $f_\theta $ is real-valued,
\item $f_\theta $ is an even function of $\varepsilon $.
\end{itemize}

\par Write 
$$
f_0=g(E,\varepsilon )g^\dagger (E,\varepsilon ),\ \ g(E,\varepsilon )=\cos
(\widetilde{I}/h)=g^*(E,\varepsilon ).
$$
Let $E_c(0)$ be a (real) zero of $g(E,0)$, so that
$E_c(0)=\widetilde{E}_k(0)$ for some $k\in \mathbb{ Z}$. For $\varepsilon
=0$, we have $f_0=g(E,0)^2\ge 0$ and $E_c(0)$ is therefore a nondegenerate
local minimum of $f_0$ with $f_0(E_c(0),0)=0$. Extend $E_c(0 )$ to an
analytic family $E_c(\varepsilon )$ of critical points of $f_0(\cdot
,\varepsilon )$:
\begin{equation}\label{ev.38}
\partial _Ef_0 (E_c(\varepsilon ),\varepsilon )=0. 
\end{equation}

We have for real $E$:
\[
\partial _Ef_0(E,\varepsilon )=2\re (\partial _Eg(E,\varepsilon
)\overline{g(E,\varepsilon )}),
\]
and differentiating this once more and putting $\varepsilon =0$,
$E=E_c(0)$, we get
$$
\partial _E^2f_0(E_c(0),0)=2\partial _Eg(E_c(0),0)\overline{\partial
  _Eg(E_c(0),0)}=2\left(\partial _Eg(E_c(0),0) \right)^2,
$$
i.e.
\begin{equation}\label{ev.39}\begin{split}
h^2\partial _E^2f_0(E_c(0),0)=&2\left(\sin (\widetilde{I}(E_c(0),0)/h)
\right)^2 \left(\partial _E \widetilde{I}(E_c(0),0) \right)^2\\
=& 2 \left(\partial _E \widetilde{I}(E_c(0),0) \right)^2 ,
\end{split}
\end{equation}
where the last identity follows from the fact that $\cos
\widetilde{I}(E_c(0),0)/h =0$.

\par Differentiating (\ref{ev.38}), we get
$$
\left( \partial _E^2 f_0 \right) \partial _\varepsilon E_c+\partial
_\varepsilon \partial _E f_0=0.
$$
Here we recall that when $E$ is real, $f_0(E,\varepsilon )$ and $\partial
_E f_0(E,\varepsilon )$ are even functions of $\varepsilon $ and hence
$\partial _\varepsilon \partial _Ef_0(E,0)=0$.
It follows that 
\begin{eqnarray}\label{ev.40}
&&\left(\partial _\varepsilon \partial _E f_0 \right) (E_c(0),0)=0,
\\[12pt]
\label{ev.41}
&&\partial _\varepsilon E_c(0)=0.
\end{eqnarray}
Using this, we get
\[
\begin{split}
\left(\partial _\varepsilon  \right)^2_{\varepsilon =0}&(f_0(E_c(\varepsilon
),\varepsilon ))
\\
&=\left(\partial _\varepsilon  \right)_{\varepsilon =0}\left(
  \underbrace{\left(\partial _Ef_0 \right) (E_c(\varepsilon ),\varepsilon )}_{=0}
\partial _\varepsilon E_c(\varepsilon )+\left(\partial _\varepsilon f_0
\right) (E_c(\varepsilon ),\varepsilon ) \right)\\
&=\left(\partial _\varepsilon  \right)^2 f_0(E_c(0),0)+\underbrace{
  \left(\partial _E\partial _\varepsilon f_0 \right)(E_c(\varepsilon
  ),\varepsilon )}_{=0}\underbrace{\partial _\varepsilon E_c(0)}_{=0}\\
&= 2\left| \partial _\varepsilon g(E_c(0),0) \right|^2.
\end{split}
\]
Thus,
\begin{equation}\label{ev.42}\begin{split}
h^2\left(\partial _\varepsilon  \right)^2_{\varepsilon =0}\left(
  f_0(E_c(\varepsilon ),\varepsilon ) \right)=&2\left(\sin
  \widetilde{I}(E_c(0),0)/h \right) ^2 \left| \left(\partial _\varepsilon
  \widetilde{I}\right) (E_c(0),0) \right| ^2\\
=&2 \left| \left(\partial _\varepsilon
  \widetilde{I}\right) (E_c(0),0) \right| ^2.
\end{split}
\end{equation}

\par We next extend $E_c(\varepsilon )$ to an analytic function
$E_c(\varepsilon ,\theta )$ determined by the conditions $E_c(\varepsilon
,0)=E_c(\varepsilon )$,
\begin{equation}\label{ev.43}
\partial _Ef_\theta (E_c(\varepsilon ,\theta ),\varepsilon )=0.
\end{equation}

\par It will be convenient to restrict the attention to a window of
size ${\cal O}(h)$:
\begin{equation}\label{ev.44}
E=E_1+hF,\ \ \varepsilon =h\widetilde{\varepsilon },
\end{equation}
where $E_1\in \mathrm{neigh\,}(E_0,\mathbb{R})$ is a parameter and
$F\in \mathrm{neigh\,}(0,\mathbb{C})$, $\widetilde{\varepsilon }\in
\mathrm{neigh\,}(0,\mathbb{R})$ are rescaled variables. It will also
be convenient to have a ``Floquet parameter'' $\kappa \in \mathbb{R}$
and introduce the following extension of (\ref{ev.37.5}):
\begin{equation}\label{ev.45}
\begin{split}
f_\theta &(E,\varepsilon ,\kappa  )=
\\
&\cos \left( \frac{\widetilde{I}}{h}-\kappa \right)\cos
\left( \frac{\widetilde{I}^\dagger}{h}-\kappa \right)-\frac{\theta
}{4}e^{-2\widetilde{J}/h}
\sin \left( \frac{I}{h}-\kappa \right)\sin
\left( \frac{I^\dagger}{h}-\kappa \right),
\end{split}
\end{equation}
which coincides with $f_\theta (E,\varepsilon )$, when $\kappa \in \pi
\mathbb{Z}$. Again, $f_\theta $ is real-valued when $E$ is real and an
even function of $\varepsilon $. If we let $E_c(\varepsilon ,\kappa ,\theta
)$ denote a local minimum of $f_\theta (\cdot ,\varepsilon ,\kappa )$,
then (\ref{ev.39}), (\ref{ev.40}), (\ref{ev.41}) extend naturally to
the case $\theta =0$. Writing 
$$
\kappa =\widetilde{\kappa }+I(E_1,0)/h,
$$
we get 
\begin{equation}\label{ev.46}
f_\theta = \widetilde{f}_\theta (F,\widetilde{\varepsilon
},\widetilde{\kappa };h)=a(F,\widetilde{\varepsilon },\widetilde{\kappa };h)+
\theta e^{-2J(E_1,0)/h}b(F,\widetilde{\varepsilon },\widetilde{\kappa };h),
\end{equation}
where $a$, $b$ are classical symbols of order $0$ in $h$:
$$
a\sim a_0+ha_1+...,\ \ b\sim b_0+hb_1+...
$$

The critical points with respect to $F$ are nondegenerate and their
number is uniformly bounded. They have asymptotic expansions in powers
of $h$ of the form,
$$F_c(\widetilde{\varepsilon },\widetilde{\kappa },\theta ;h)\sim 
F_c^0(\widetilde{\varepsilon },\widetilde{\kappa },\theta e^{-2J(E_1,0)/h)})+h F_c^1(\widetilde{\varepsilon },\widetilde{\kappa },\theta e^{-2J(E_1,0)/h)})+...
$$ which gives
\begin{equation}\label{ev.47}
F_c(\widetilde{\varepsilon },\widetilde{\kappa },\theta
;h)=F_c^1(\widetilde{\varepsilon },\widetilde{\kappa } ;h)+\theta
e^{-2J(E_1,0)/h}
F_c^2(\widetilde{\varepsilon },\widetilde{\kappa },\theta ;h),
\end{equation}
where $F_c^k$ are classical symbols of order $0$ in $h$ and also
holomorphic functions. 
 Notice that the terms in the asymptotic expansion of $F_c^2$
in powers of $h$ are independent of $\theta $.  
$E_c=E_1+hF_c$ will be a critical point of $f_\theta (\cdot ,\varepsilon
)$ in (\ref{ev.37.5}) when $\kappa \in \pi \mathbb{Z}$, i.e. when 
\begin{equation}\label{ev.48}
\widetilde{\kappa }\equiv -\frac{I(E_1,0)}{h}\ \mathrm{mod}\ \pi \mathbb{Z}.
\end{equation}

\par It follows from (\ref{ev.47}) that 
$$
F_c(\widetilde{\varepsilon },\widetilde{\kappa },\theta
;h)=F_c(\widetilde{\varepsilon },\widetilde{\kappa },0;h)+{\cal
  O}(1)\theta e^{-2J(E_1,0)/h},
$$
and hence that 
\begin{equation}\label{ev.49}
E_c(\varepsilon ,\kappa ,\theta )=E_c(\varepsilon ,\kappa ,0)+{\cal O}(h)\theta e^{-2J(E_1,0)/h}.
\end{equation}
To evaluate the critical value $f_\theta (E_c(\varepsilon ,\kappa ,\theta
),\varepsilon ,\kappa )=:f_\theta ^c(\varepsilon ,\kappa )$ for $\theta =1$, we notice that 
\[
\begin{split}
\partial _\theta & (f_\theta (\varepsilon ,\kappa ))=(\partial _\theta
f_\theta )(E_c(\varepsilon ,\kappa ,\theta ))\\
=&-\frac{1}{4}\left( e^{-2\widetilde{J}/h}\sin \left(\frac{I}{h}-\kappa  \right)
\sin \left(\frac{I^\dagger}{h}-\kappa  \right) \right) (E_c(\varepsilon ,\kappa
,\theta ),\varepsilon ,\kappa )\\
=&-\frac{1}{4} e^{-2\widetilde{J}(E_c(0,\kappa ,0),0)} \times \\ 
&\left[\left(\sin
    \left(\frac{I}{h}-\kappa  \right) \right)^2(E_c(0,\kappa
  ,0),0,\kappa ))+{\cal O}(1)\left( \theta
    e^{-2\re J(E_1,0)/h}+\frac{|\varepsilon |}{h} \right) \right].
\end{split}
\]
Here, we use that 
\[
\sin \left(\frac{\widetilde{I}}{h}-\kappa  \right) (E_c(0,\kappa
,0),0,\kappa )=\pm 1,\ \ \sin\left(\frac{I}{h}-\kappa  \right)=\sin\left(\frac{\widetilde{I}}{h}-\kappa  \right)+h
\]
and integrate from $\theta =0$ to $\theta =1$, to get
\begin{equation}\label{ev.50}\begin{split}
&f_1^c(\varepsilon ,\kappa )=f_0^c(\varepsilon ,\kappa )\\
&-\frac{1}{4}e^{-2J(E_c(0,\kappa
  ,0),0)/h}\left(1+{\cal O}\left(e^{-2\re
      J((E_1,0),0)/h}+\frac{|\varepsilon |}{h}+{\cal O}(h) \right) \right).
\end{split}
\end{equation}

\par Now, return to the window (\ref{ev.44}), where $f_\theta
=\widetilde{f}_\theta (F,\widetilde{\varepsilon },\widetilde{\kappa };h)$
is given by (\ref{ev.46}) and the critical point $F_c$ is as in
(\ref{ev.47}). We have with $\theta =1$ (and suppressing the
corresponding subscript 1)  
\begin{equation}\label{ev.51}
f^c(\varepsilon ,\kappa ;h)=\widetilde{f}^c(\widetilde{\varepsilon
},\widetilde{\kappa };h)=g_1(\widetilde{\varepsilon },\widetilde{\kappa
};h)+g_2(\widetilde{\varepsilon },\widetilde{\kappa
};h)e^{-2J(E_c(0,\kappa ,0),0)/h},
\end{equation}
where $g_j$ are classical symbols of order $0$ in $h$. (We first get
this with $J(E_1,0)/h$ in the exponent, but the replacement by
$J(E_c(0,\kappa ,0),0)$ does not modify the general structure of the
formula.) (\ref{ev.50}) shows that
\begin{equation}\label{ev.52}
g_1(0,\widetilde{\kappa };h)=0,\ \ g_2(0,\widetilde{\kappa })=-\frac{1}{4},
\end{equation} 
where $g_{j,0}$ is the leading term in the asymptotic expansion of $g_j$.

From (\ref{ev.42}), we deduce that
\begin{equation}\label{ev.53}
\partial _{\widetilde{\varepsilon }}^2g_{1,0}(0,\widetilde{\kappa })=2
|\partial _\varepsilon I(E_1,0)|^2>0.
\end{equation}

\par Combining (\ref{ev.51}), (\ref{ev.52}), (\ref{ev.53}), we get by
Taylor expansion,
\begin{equation}\label{ev.54}
\widetilde{f}^c(\widetilde{\varepsilon },\widetilde{\kappa
};h)=g_2(0,\widetilde{\kappa };h)e^{-2J(E_c(0,\kappa
  ,0),0)/h}+k(\widetilde{\varepsilon },\widetilde{\kappa
};h)\widetilde{\varepsilon }^2,
\end{equation}
where $k$ is a symbol of order $0$ in $h$, holomorphic in the other
variables, even in $\widetilde{\varepsilon }$ and satisfying
$k(0,\widetilde{\kappa };0)=|\partial _\varepsilon
I(E_1,0)|^2$. $\widetilde{f}^c(\cdot ,\kappa ;h)$ has precisely two
zeros in a neighborhood $0$ which are real and of the form $\pm
\widetilde{\varepsilon }_c(\kappa ;h)$, where
\begin{equation}\label{ev.55}
\widetilde{\varepsilon }_c(\kappa ;h)=\ell (\widetilde{\kappa
};h)e^{-J(E_c(0,\kappa ,0),0)/h},
\end{equation}
and $\ell$ is a symbol of order $0$ with leading term
$$
\ell_0(\widetilde{\kappa })=\frac{1}{2|\partial _\varepsilon I(E_1,0)|}\cdotp
$$
Using that our functions are holomorphic in $\widetilde{\varepsilon }$,
$\widetilde{\kappa }$, we see that 
\begin{equation}\label{ev.56}
\widetilde{f}^c(\widetilde{\varepsilon },\widetilde{\kappa } ;h)=m(\widetilde{\varepsilon
},\widetilde{\kappa };h)(\widetilde{\varepsilon }^2-\widetilde{\varepsilon
}_c(\widetilde{\kappa };h)^2),
\end{equation}
where $m$ is holomorphic in $\widetilde{\varepsilon }$,
$\widetilde{\kappa }$ and a symbol of order $0$ in $h$ with leading
term $m_0(\widetilde{\varepsilon },\widetilde{\kappa })$, satisfying 
\begin{equation}\label{ev.57}
m_0(0,\widetilde{\kappa })=|\partial _\varepsilon I(E_1,0)|.
\end{equation}

\par (\ref{ev.39}) can be extended to the $\kappa $-dependent case:
\begin{equation}\label{ev.58}
(h^2\partial _E^2 f_0)(E_c(0,\kappa ,0),\kappa ,0)=2(\partial
_E\widetilde{I}(E_c(0,\kappa ,0))^2.
\end{equation}
This implies that
\begin{equation}\label{ev.59}
(\partial _F^2\widetilde{f}_1)(F_c,\widetilde{\varepsilon
},\widetilde{\kappa };h)=(\partial _EI(E_c(0,\kappa ,0),0))^2+{\cal O}(h),
\end{equation}
where the remainder has a complete asymptotic expansion in powers of
$h$.
By Taylor expansion,
\begin{equation}\label{ev.60}
\widetilde{f}_1(F,\widetilde{\varepsilon },\widetilde{\kappa
};h)=\widetilde{f}^c(\widetilde{\varepsilon }, \widetilde{\kappa
};h)+q(F,\widetilde{\varepsilon } ,\widetilde{\kappa };h)(F-F_c(\widetilde{\varepsilon },\widetilde{\kappa },1;h))^2,
\end{equation}
where $q>0$ is a symbol of order $0$ and 
\begin{equation}\label{ev.61}
q(F_c,0,\widetilde{\kappa };0)=2(\partial _E\widetilde{I}(E_c(0,\kappa ,0),0))^2.
\end{equation}

\par $\widetilde{f}(\cdot ,\widetilde{\varepsilon },\widetilde{\kappa
};h)$ has two zeros in a small neighborhood of $F_c$ when counted with
their multiplicity:
\begin{itemize}
\item When $|\widetilde{\varepsilon }|<\widetilde{\varepsilon
  }_c(\widetilde{\kappa };h)$ the zeros are real and simple, given by
\begin{equation}\label{ev.62}
q(F,\widetilde{\varepsilon },\widetilde{\kappa
};h)^{\frac{1}{2}}(F-F_c(\widetilde{\varepsilon },\widetilde{\kappa
},1;h))=\pm (-\widetilde{f}^c(\widetilde{\varepsilon },\widetilde{\kappa
};h))^{\frac{1}{2}}
\end{equation}
\item When $|\widetilde{\varepsilon }|=\widetilde{\varepsilon
  }_c(\widetilde{\kappa };h)$ we have a double zero, 
\begin{equation}\label{ev.63}
F=F_c.
\end{equation}
\item When $|\widetilde{\varepsilon }|>\widetilde{\varepsilon
  }_c(\widetilde{\kappa };h)$ the zeros are non-real and simple and
  complex conjugate to each other, given by
\begin{equation}\label{ev.64}
q(F,\widetilde{\varepsilon },\widetilde{\kappa
};h)^{\frac{1}{2}}(F-F_c(\widetilde{\varepsilon },\widetilde{\kappa
},1;h))=\pm i (\widetilde{f}^c(\widetilde{\varepsilon},\widetilde{\kappa
};h))^{\frac{1}{2}}.
\end{equation}
\end{itemize}

\bibliographystyle{amsplain}

\end{document}